\newtheorem{theorem}{Theorem}[section]
\newtheorem{proposition}[theorem]{Proposition}
\newtheorem{lemma}[theorem]{Lemma}
\newtheorem{corollary}[theorem]{Corollary}
\theoremstyle{definition}
\newtheorem{example}[theorem]{Example}
\newtheorem{remark}[theorem]{Remark}
\newenvironment{customassumption}[1]
{\innercustomthm}
{\endinnercustomthm}
\providecommand{\customgenericname}{}
\newcommand{\newcustomtheorem}[2]{%
  \newenvironment{#1}[1]
  {%
   \renewcommand\customgenericname{#2}%
   \renewcommand\theinnercustomgeneric{##1}%
   \innercustomgeneric
  }
  {\endinnercustomgeneric}
}
\newcommand{\ignore}[1]{}
\newcommand{\R}{\mathbb{R}}
\newcommand{\N}{\mathbb{N}}
\newcommand{\on}[1]{\operatorname{#1}}
\newcommand{\norm}[1]{\left\lVert #1 \right\rVert}
\newcommand{\abs}[1]{\left\vert #1 \right\rvert}
\newcommand{\E}[1]{\mathbb{E}{\left[ #1\right]}}
\newcommand{\range}{\mathrm{Range}}
\newcommand{\ceil}[1]{\lceil #1 \rceil}
\DeclarePairedDelimiter\autobracket{(}{)}
\newcommand{\brac}[1]{\autobracket*{#1}}
\newcommand{\inner}[1]{\left\langle #1 \right\rangle}
\theoremstyle{definition}
\theoremstyle{definition}
\definecolor{crimson}{rgb}{0.7294,0.0666,0.0470}
\begin{document}
\bibliographystyle{amsalpha}

\author{Solesne Bourguin}\thanks{Solesne Bourguin was partially supported by the Simons Foundation Award 635136.}
\address{$^1$Department of Mathematics and Statistics, Boston University, 665 Commonwealth Avenue, Boston, MA 02215, USA}
\email{bourguin@math.bu.edu}
\author{Thanh Dang}
\address{$^2$Department of Computer Science, University of Rochester, 2513 Wegmans Hall, Rochester, NY 14627, USA}
\email{ycloud777@gmail.com}
\author{Yaozhong Hu}\thanks{Yaozhong Hu was supported by an NSERC discovery fund and centennial fund of University of Alberta.} 
\address{$^3$Department of Mathematical and Statistical Sciences, University of Alberta, Edmonton, AB T6G 2G1, Canada}
\email{yaozhong@ualberta.ca}
\title[]{Non-central limit of densities of some functionals of Gaussian processes}

\begin{abstract}
We establish the convergence of the densities of a sequence
of nonlinear functionals of an underlying Gaussian
process  to the density of a Gamma distribution. The key idea of our work is a new density formula for random variables in the setting of Markov diffusion generators, which yields a special representation for the density of a Gamma distribution. Via this representation, we are able to provide precise
estimates on the distance between densities while developing the techniques of
Malliavin calculus and Stein's method suitable to Gamma approximation at the density level. We
first focus our study on the case of random variables living in a
fixed Wiener chaos of an even order for which the   bound for the difference of the densities  
can be dominated by  a linear combination of moments up to
order four. We then study the case of general Gaussian functionals
with possibly infinite chaos expansion. Finally, we provide an
application to random variables
living in the second Wiener chaos. 
\end{abstract}
\subjclass[2010]{60F05, 60H07}
\keywords{convergence of densities; Markov diffusion generator; Malliavin calculus; Stein's
  method; Multiple Wiener-It\^o integrals; Wiener chaos.}
\maketitle



\section{Introduction}
Central limit theorems  of (nonlinear) functionals of a Gaussian
process, which we shall refer to as Gaussian functionals, have been widely studied in the past decades, in particular
by making use of Malliavin calculus (given the
Gaussian underlying context). Central asymptotic results are now also
available in a quantitative way, i.e. via convergence rates  with respect to 
various  metrics/probability distances. In particular, the seminal work \cite{NP09main} combines Stein's method and Malliavin calculus to obtain quantitative central limit theorems on Gaussian spaces, pioneering what is known today as the Malliavin-Stein method for studying quantitative approximation problems. Among the results obtained in \cite{NP09main} is the four moment theorem, which asserts that for a sequence of multiple Wiener integrals $F_n$ of   fixed order, convergence in laws to a normal distribution $N$ is equivalent to convergence of the first four moments of $F_n$ to the first four moments of $N$. 

The reason why multiple Wiener integrals are important objects to study is because any square-integrable nonlinear functionals of Gaussian processes can be decomposed
into a possibly infinite sum of multiple Wiener integrals of
increasing orders (known as the Wiener chaos decomposition), making them the building blocks of Gaussian functionals. Hence, one way to go about studying
Gaussian functionals is to study the behavior of these  building
blocks. This is the route that was taken by the above-mentioned
reference and that has proven to be extremely successful and fruitful
for the last fifteen years in building a research communities around the Malliavin-Stein method. 

Motivated by applications to statistics
  	and  others fields, the question of sufficient conditions for the  convergence of the densities  of 
  	such  nonlinear Gaussian functionals to a normal density  was posed and
  	answered in \cite{HLN14}, where uniform bounds for the
  	convergence of densities and their derivatives were obtained via the Malliavin-Stein method. One of the results in \cite{HLN14} is a four moment theorem for density convergence of a sequence of multiple Wiener integrals $F_n$ to a normal density $N$ (compared to the one in \cite{NP09main} about convergence in laws). Informally, the result says if the first four moments of $F_n$ converge to the corresponding first four moments of $N$, and additionally if negative moments of certain order of the Malliavin norm of $F_n$ exist, then densities of $F_n$ and their derivatives up to a certain order will converge in the uniform metric to the normal density and its derivatives. On a related note, the authors of a recent paper \cite{herry2024superconvergence}  observe what they call the superconvergence phenomenon: if a sequence of multiple integrals $F_n$ converges in distribution to a normal density $N$, then densities of $F_n$ and their derivatives (of all orders) respectively converge in the uniform metric to the normal density and its derivatives. This is a significant improvement of the four moment theorem in \cite{HLN14}. We have some technical comments about \cite{herry2024superconvergence} in a paragraph below.


In recent years, with the emergence of new tools and techniques, there has been a growing interest in pushing the
asymptotic study of functionals of Gaussian processes beyond the
central regime. For instance, the reference \cite{ACP14} studies under which conditions the limiting distributions
are the Gamma and Beta ones. Later on, in \cite{bourguintaqqu2019}, the whole
Pearson class of distribution is considered and quantitative estimates
for convergence in distribution to elements of this class are
obtained. 

The goal of this paper is to go further to investigate the
convergence of densities (and their derivatives) of functionals of Gaussian processes in a
non-central regime, i.e. when the limiting law is not Gaussian but a Gamma distribution. To accomplish this goal, we derive a new density formula for random variables in the setting of Markov diffusion generators (see Proposition \ref{prop_densityrepgeneral}), which then leads to a representation for the density of a Gamma distribution. {In fact, the result in  Proposition \ref{prop_densityrepgeneral} along with Proposition \ref{prop_densitygeneralformderivatives} can be applied to get density representations for many other targets that are invariant probability measures of Markov processes (see Remark \ref{remark_othertargets}).} Moreover, we are able to  extend the Malliavin-Stein method to be suitable to this new situation of approximation of a Gamma density. 

It is worth pointing out that in the Malliavin-Stein literature, the majority of references are concerned with  convergence in some probability distance and/or convergence in laws. Only a few references such as \cite{herry2024superconvergence,HLN14, hutindel2015density,kuzgun2022convergence} investigate density convergence of random variables on Wiener chaos, which however   focus on the central regime, i.e. density convergence to a normal one. {Thus, there remains much to investigate regarding the Malliavin-Stein method at the density level.}
   
The two main results of this paper address the case where the Gaussian
functional is a multiple Wiener integral and the case where the
Gaussian functional has a possibly infinite chaos expansion,
respectively. For the first case, we provides pointwise
estimates between the density of a random variable $F$ living in a
Wiener chaos of an even order and the density of a Gamma random variable. As an extension to this, we also provide pointwise estimates for the derivatives of the densities. In particular, our main estimates take a
particularly simple form: they are given as linear combination of the first four moments of $F$ and the first four moments of the Gamma random variables. For the second case,
whenever $F$ has a possible infinite chaos expansion, we obtain density estimates based on operators in Malliavin
calculus, keeping in mind that bounds based on moments cannot be obtained in this
context. 

To state our main results we will denote by
$\mathcal{G}_{\alpha}$ the Gamma distribution of parameter $\alpha>0$ with density function given by
\begin{align}
	\label{formula_gammadensity_original}
	p_{\mathcal{G}_\alpha}(x) =\begin{cases} 
		\frac{1}{\Gamma(\alpha)}x^{\alpha-1}e^{-x} &  x>0 \\
		0 & x \leq 0
	\end{cases}
\end{align}
for which it is easily seen by induction that for $k\geq 1$,
\begin{align}
	\label{formula_gammadensityderivative_original}
	p^{(k)}_{\mathcal{G}_\alpha}(x) =\begin{cases} 
		\displaystyle \frac{(-1)^k}{\Gamma(\alpha)}x^{\alpha-1}e^{-x}\sum_{i=0}^{k}{k\choose i}\prod_{j=1}^i(j-\alpha)\frac{1}{x^i}&  x>0 \\
		0 & x\leq 0\,, 
	\end{cases},
\end{align}
where $p^{(k)}_{\mathcal{G}_\alpha}(x)$ denotes the $k$-th derivative
of $p_{\mathcal{G}_\alpha}(x)$. Moreover, $p^{(k)}_{\mathcal{G}_\alpha}(0)=0$ when $\alpha>k+1$ and undefined otherwise. Here we adopt the convention $\prod_{k=1}^0\alpha_k=1$ for any $\alpha_k$.

In what follows, we denote by $D$ the Malliavin derivative operator
acting on the isonormal Gaussian process underlying the functional
$F$. This isonormal Gaussian process is assumed to be indexed by a
Hilbert space $\mathfrak{H}$. All these notions and objects will be
rigorously defined in Subsection \ref{malliavinsubsection}, to which we
refer the reader for more details.

In the case where $F$ is a multiple Wiener integrals, our main
findings can be split in two results which deal with the densities
themselves and their derivatives, respectively. The first one is
stated below.
\begin{theorem}
\label{theorem_fourthmoment_singlechaos_0thderivative}
Let $F$ be a multiple Wiener integral of an even order $q\geq 2$ such
that $\E{F^2}=\alpha$. Denote by $p_{F+\alpha}$ the density function
of $F +\alpha$. Then, for every $x \neq 0$ and $\alpha >0$, one has
the pointwise density estimate
\begin{align*}
     \abs{p_{F+\alpha}(x)-p_{\mathcal{G}_\alpha}(x)} &\leq P_0(x)\sqrt{\frac{q^2}{3}\brac{\E{F^4}-6\E{F^3}+6(1-\alpha)\alpha+3\alpha^2}}\, ,
\end{align*}
where $P_0(x)$ is a positive quantity depending on $x$ given by 
\begin{align*}
&d_1(x)\sum_{i=1}^{\ceil{\alpha}-1}\E{(F+\alpha)^{2(\alpha-i)}}^{1/2}+
  {d_2(x)}\E{(F+\alpha)^{-2}}^{1/2}+{d_3(x)}\\
&\quad +\E{\norm{DF}_{\mathfrak{H}}^{-4}}^{1/2}+\abs{\alpha-1}\E{\norm{DF}_{\mathfrak{H}}^{-4}(F+\alpha)^{-2}}^{1/2} +C_1\E{\norm{DF}_{\mathfrak{H}}^{-6}}^{1/2},
\end{align*}
where the factors $d_1(x),d_2(x),d_3(x)$ are positive and finite for every $x$, and $C_1$ is some positive constant independent of $x$. Moreover, for $x=0$ the same estimate holds if $\alpha>1$. 
\end{theorem}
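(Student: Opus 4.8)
The plan is to obtain both $p_{F+\alpha}$ and $p_{\G}$ from the single density formula of Proposition~\ref{prop_densityrepgeneral} and to bound their difference by a Stein-type discrepancy. Concretely, I would apply Proposition~\ref{prop_densityrepgeneral} to $Y:=F+\alpha$, writing $p_{F+\alpha}(x)$ as the expectation of $\mathbf 1_{\{Y>x\}}$ against an explicit weight built from $DF$, $\norm{DF}_{\mathfrak H}^2$ and a divergence; specializing the same formula to the Stein kernel $y\mapsto y$ that characterises $\G$ (the identity $\E{(Y-\alpha)f(Y)}=\E{Yf'(Y)}$) produces the announced special representation of $p_{\G}$. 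Subtracting the two representations, the difference is driven by the gap between the operator $\Gamma(F,-L^{-1}F)=\frac{1}{q}\norm{DF}_{\mathfrak H}^2$ (using $-L^{-1}F=\frac{1}{q}F$ on the $q$-th chaos) and the ideal Gamma kernel $Y$.

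Next I would make this gap explicit through Malliavin integration by parts. Writing the Gamma Stein operator as $\mathcal Af(y)=yf'(y)-(y-\alpha)f(y)$ and using $\E{(Y-\alpha)f(Y)}=\E{Ff(F+\alpha)}=\frac{1}{q}\E{f'(F+\alpha)\norm{DF}_{\mathfrak H}^2}$, one gets $\E{\mathcal Af(Y)}=\E{f'(Y)\brac{Y-\frac{1}{q}\norm{DF}_{\mathfrak H}^2}}$. After inserting this identity into the difference of representations and applying Cauchy--Schwarz, the estimate splits into the factor $\E{\brac{\norm{DF}_{\mathfrak H}^2-q(F+\alpha)}^2}^{1/2}$ and the weight $P_0(x)$. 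The first factor is then expanded using the multiplication formula on the $q$-th chaos together with the integration-by-parts identities $\E{F\norm{DF}_{\mathfrak H}^2}=\frac{q}{2}\E{F^3}$ and $\E{\norm{DF}_{\mathfrak H}^2}=q\alpha$, and controlled by the variance estimate for $\norm{DF}_{\mathfrak H}^2$ valid on a fixed chaos; this is what turns it into the linear combination $\frac{q^2}{3}\brac{\E{F^4}-6\E{F^3}+6(1-\alpha)\alpha+3\alpha^2}$. As a consistency check, this combination vanishes exactly when $F+\alpha$ carries the first four moments of $\G$, so it genuinely plays the role of a distance to the Gamma law.

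It then remains to bound the weight $P_0(x)$. Its negative-order pieces $\E{\norm{DF}_{\mathfrak H}^{-4}}^{1/2}$, $\E{\norm{DF}_{\mathfrak H}^{-6}}^{1/2}$ and the factor $\frac{4(q-1)^4}{q^2}$ come from evaluating, by duality, the divergence weight appearing in Proposition~\ref{prop_densityrepgeneral}: differentiating $\norm{DF}_{\mathfrak H}^2$ on the $q$-th chaos is what produces the $(q-1)$ powers, while the inverse norms reflect the nondegeneracy needed for the formula to make sense. The terms carrying $\E{(F+\alpha)^{2(\alpha-i)}}^{1/2}$ and $\E{(F+\alpha)^{-2}}^{1/2}$ originate from the explicit solution of the Gamma Stein equation, which inherits the profile $x^{\alpha-1}e^{-x}$ and the poles in $x$ visible in \eqref{formula_gammadensityderivative_original}; to move derivatives off this singular profile I would integrate by parts in the space variable, and because the exponent $\alpha-1$ is not an integer each step lowers the power by one and the procedure must be truncated after $\ceil{\alpha}-1$ steps, which is precisely the origin of the sum $\sum_{i=1}^{\ceil{\alpha}-1}$. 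The finite prefactors $d_1(x),d_2(x),d_3(x)$ then collect the remaining explicit $x$-dependence coming from $x^{\alpha-1}e^{-x}$ and the rational corrections in \eqref{formula_gammadensityderivative_original}.

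Finally, the point $x=0$ is treated separately: the boundary contributions generated both by the representation and by the iterated integration by parts behave like $x^{\alpha-1}$ and $x^{\alpha-2}$ as $x\downarrow0$, so they vanish there exactly when $\alpha>1$, which accounts for the extra hypothesis in that case. The step I expect to be the main obstacle is precisely the treatment of the non-integer power $\alpha-1$: organising the iterated integration by parts so that every boundary term is shown to vanish (or to be absorbed into a finite $d_j(x)$) while simultaneously keeping all the negative Malliavin moments integrable is the delicate point, whereas the discrepancy computation of the second step is essentially an algebraic expansion once the chaos identities are in place.
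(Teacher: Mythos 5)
Your overall architecture is the right one and matches the paper's: represent $p_{\mathcal{G}_\alpha}$ via the Laguerre/Markov-generator formula, represent $p_{F+\alpha}$ via a divergence-weighted indicator, compare the two through the Gamma Stein equation with the singular test function $\mathds{1}_{\{y>x\}}\brac{1+\frac{1-\alpha}{y}}$, extract the factor $\E{\brac{\norm{DF}_{\mathfrak{H}}^2-q(F+\alpha)}^2}^{1/2}$ by Cauchy--Schwarz, and finish with the fourth-moment bound of \cite{ACP14}. Your reading of where $d_1,d_2,d_3$ and the truncation at $\ceil{\alpha}-1$ come from, and of why $x=0$ forces $\alpha>1$, is also essentially the paper's.

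However, there is a genuine gap at the single most delicate step. The Stein identity you write compares $\E{\mathds{1}_{\{F+\alpha>x\}}\nu_1(F+\alpha)}$ with $p_{\mathcal{G}_\alpha}(x)$, but the density of $F+\alpha$ is \emph{not} $\E{\mathds{1}_{\{F+\alpha>x\}}\nu_1(F+\alpha)}$: expanding $\delta\brac{DF/\norm{DF}_{\mathfrak{H}}^2}$ produces, in addition to $\nu_1(F+\alpha)$ and a term proportional to $\Theta=\norm{DF}_{\mathfrak{H}}^2-q(F+\alpha)$, the remainder $\frac{1}{w^2}\inner{2D^2F\otimes_1 DF-qDF,\,DF}_{\mathfrak{H}}$. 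For the final estimate to carry the single factor $\sqrt{\frac{q^2}{3}\brac{\E{F^4}-6\E{F^3}+6(1-\alpha)\alpha+3\alpha^2}}$, this remainder must itself be dominated by $\E{\Theta^2}^{1/2}$; this is the content of the paper's Lemma \ref{lemma_compute_D2F}, proved by expanding both $\E{\norm{2D^2F\otimes_1 DF-qDF}_{\mathfrak{H}}^2}$ and $\E{\Theta^2}$ through the product formula into contraction norms $\norm{f\widetilde{\otimes}_{r+1}f}^2$ and matching them term by term, including the middle term $r=q/2-1$ which only exists because $q$ is even. Your plan attributes the factor $\frac{4(q-1)^4}{q^2}\E{\norm{DF}_{\mathfrak{H}}^{-6}}^{1/2}$ to ``differentiating $\norm{DF}_{\mathfrak{H}}^2$'' but never explains how that remainder is absorbed into the fourth-moment discrepancy, and indeed you never invoke the hypothesis that $q$ is even anywhere --- a hypothesis without which the paper itself does not know how to prove the estimate. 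A secondary flaw: Proposition \ref{prop_densityrepgeneral} cannot be applied to $Y=F+\alpha$ as you propose, since its assumption (2) requires $\Gamma(F)$ to be a deterministic function $\tau(F)$, which fails for a general multiple integral; the representation of $p_{F+\alpha}$ must instead come from the classical Wiener-space formula \eqref{formula_densitymalcal}, as the paper does, with Proposition \ref{prop_densityrepgeneral} reserved for the Gamma target in the Laguerre structure.
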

We emphasize that the above density estimate is tight, in the sense that if $F+\alpha$ is identically distributed to $\mathcal{G}_\alpha$ then the right hand side of the above estimate equals zero. We refer to Remark \ref{remark_tightestimate} for  further explanation.

As far as the derivatives of the above densities are concerned, we can
state the following estimates, which naturally uses higher order  Malliavin derivatives. 
\begin{theorem}
\label{theorem_fourthmoment_singlechaos_kthderivative}
Let $F$ be a multiple Wiener integral of an even order $q\geq 2$ such
that $\E{F^2}=\alpha$ and $1/\norm{DF}^2_\frak{H}\in \cap_{p\geq 1}L^p$. Denote by $p_{F+\alpha}$ the density function
of $F +\alpha$. Let $k\geq 1$ be any integer. Then, for every $x \neq
0$ and $\alpha >0$, one has the pointwise density estimate
\begin{align*}
     &\abs{p^{(k)}_{F+\alpha}(x)-p^{(k)}_{\mathcal{G}_\alpha}(x)}\leq P_k(x)\sqrt{\frac{q^2}{3}\brac{\E{F^4}-6\E{F^3}+6(1-\alpha)\alpha+3\alpha^2}}\, ,
\end{align*}
where $P_k(x)$ is a quantity depending on $x$, as well as on
$\E{\norm{DF}_{\mathfrak{H}}^{-12i}}$, $\E{(F+\alpha)^{-6j}}$ and $\E{(F+\alpha)^{2(\alpha-n)}}$ for $1\leq i\leq 2k+2$, $1\leq j\leq k+1$ and $1\leq n\leq \ceil{\alpha}-1$. Moreover for $x=0$, the same estimate holds if $\alpha>k+1$.     
\end{theorem}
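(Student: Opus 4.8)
The plan is to reproduce, now at the level of the $k$-th derivative, the argument behind Theorem~\ref{theorem_fourthmoment_singlechaos_0thderivative}. First I would apply the derivative density formula of Proposition~\ref{prop_densitygeneralformderivatives} to $F+\alpha$, which expresses $p^{(k)}_{F+\alpha}(x)$ as the expectation of an indicator $\mathds{1}_{\{F+\alpha>x\}}$ tested against a Malliavin functional built from the iterated derivatives of $F$ and from negative powers of $\norm{DF}_{\mathfrak{H}}^{2}$. Since the same representation, specialized to the Gamma target, reproduces the explicit density \eqref{formula_gammadensity_original} together with its derivatives \eqref{formula_gammadensityderivative_original} (this being the mechanism behind Corollary~\ref{corollary_othertargets}), applying it to $\mathcal{G}_\alpha$ yields $p^{(k)}_{\mathcal{G}_\alpha}(x)$ in a form directly comparable with that of $p^{(k)}_{F+\alpha}(x)$. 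Subtracting the two representations then reduces the problem to estimating the expectation of a single discrepancy term.

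The key point is that, once the integration-by-parts steps intrinsic to the density formula are carried out, this discrepancy is driven by the Gamma Stein deficiency $(F+\alpha)-\tfrac1q\norm{DF}_{\mathfrak{H}}^{2}$, exactly as in the case $k=0$. Indeed, since $F$ lives in the fixed chaos of even order $q$ one has $-DL^{-1}F=\tfrac1q DF$, so that $\inner{DF,-DL^{-1}F}=\tfrac1q\norm{DF}_{\mathfrak{H}}^{2}$, and the chaos computation underlying the fourth moment theorem for the Gamma law yields
\[
\E{\brac{(F+\alpha)-\tfrac1q\norm{DF}_{\mathfrak{H}}^{2}}^2}\le\frac{q^2}{3}\brac{\E{F^4}-6\E{F^3}+6(1-\alpha)\alpha+3\alpha^2}\, ,
\]
whose right-hand side is exactly the square of the leading factor in the claimed bound and vanishes when $F+\alpha$ has the law $\mathcal{G}_\alpha$. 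I would then write the discrepancy as the product of this deficiency with a remainder and apply Cauchy--Schwarz to peel off the $L^2$-norm of the deficiency from the $L^2$-norm of the remainder. The latter, after expanding the iterated Malliavin derivatives and the $x^{\alpha-1}$ prefactor of \eqref{formula_gammadensity_original}, is precisely the quantity absorbed into $P_k(x)$: it produces the negative moments $\E{\norm{DF}_{\mathfrak{H}}^{-12i}}$ for $1\le i\le 2k+2$, the moments $\E{(F+\alpha)^{-6j}}$ for $1\le j\le k+1$, and the boundary moments $\E{(F+\alpha)^{2(\alpha-n)}}$ for $1\le n\le\ceil{\alpha}-1$, matching the list in the statement.

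The main obstacle I anticipate lies in the integrability bookkeeping. Each differentiation of the density representation lowers the power of $\norm{DF}_{\mathfrak{H}}^{2}$ standing in the denominator, so that controlling $P_k(x)$ after Cauchy--Schwarz requires all the negative moments $\E{\norm{DF}_{\mathfrak{H}}^{-12i}}$ up to $i=2k+2$ to be finite; this is exactly the role of the hypothesis $1/\norm{DF}^2_{\mathfrak{H}}\in\cap_{p\ge1}L^p$, and the H\"older exponents in the splitting must be arranged so that only moments of the stated orders appear. A second and more delicate issue is the behavior at the origin: by \eqref{formula_gammadensityderivative_original} the $k$-th derivative of the Gamma density carries terms proportional to $x^{\alpha-1-i}$ for $0\le i\le k$, so the representation remains well defined and the estimate persists down to $x=0$ only when these terms vanish there, which forces $\alpha>k+1$, in agreement with the remark following \eqref{formula_gammadensityderivative_original}. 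I would therefore establish the bound first for $x\neq0$, where the $x$-dependent factors in $P_k(x)$ are finite, and then treat the boundary case $x=0$ separately under the stronger condition $\alpha>k+1$.
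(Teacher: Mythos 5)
Your proposal contains a genuine gap at its very first step, and a second one at its central step. First, Proposition~\ref{prop_densitygeneralformderivatives} cannot be applied to $F+\alpha$: that proposition requires the structural hypothesis $\Gamma(F)=\tau(F)$, i.e.\ that the carr\'e du champ of the random variable be a deterministic function of the random variable itself. This holds for $\mathcal{G}_\alpha$ under the Laguerre structure (where $\Gamma(\mathcal{G}_\alpha)=\mathcal{G}_\alpha$, which is how Corollary~\ref{corollary_densitygamma} — not Corollary~\ref{corollary_othertargets} — produces the representation $p^{(k)}_{\mathcal{G}_\alpha}(x)=\E{\mathds{1}_{\{\mathcal{G}_\alpha>x\}}\nu_{k+1}(\mathcal{G}_\alpha)}$), but it fails for a multiple Wiener integral, since $\norm{DF}^2_{\mathfrak{H}}$ is in general not a function of $F$. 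The paper instead starts from the classical Malliavin density formula \eqref{formula_densitymalcal} for $p^{(k)}_{F+\alpha}$, and the real work is Lemma~\ref{lemma_decompositiondensityforgammaconvergence}: an inductive decomposition of that formula into $\nu_{k+1}(F+\alpha)+T_{k+1}$, where $T_{k+1}$ is an explicit polynomial remainder in $\Theta=\norm{DF}^2_{\mathfrak{H}}-q(F+\alpha)$ and the contractions $\Lambda(l,m)=\lambda(l,m)D^lF\otimes_1D^mF-D^{l+m-2}F$. Your proposal has no substitute for this lemma.

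Second, "subtracting the two representations" does not reduce matters to a single discrepancy term, because the two expectations run over different random variables: $\E{\mathds{1}_{\{F+\alpha>x\}}\nu_{k+1}(F+\alpha)}$ and $\E{\mathds{1}_{\{\mathcal{G}_\alpha>x\}}\nu_{k+1}(\mathcal{G}_\alpha)}$ cannot be compared by Cauchy--Schwarz against the deficiency; one needs Stein's method with the unbounded, discontinuous test functions $h_{k+1}(y)=\mathds{1}_{\{y>x\}}\nu_{k+1}(y)$, including existence and pointwise growth bounds for the solution of the Stein equation (Propositions~\ref{prop_steinmethod_x>0}, \ref{prop_steinmethod_x<0}, \ref{prop_steinmethod_x=0}) and the Malliavin integration by parts \eqref{intbypartnourdinpeccatibook}; only then does the factor $\E{\brac{(F+\alpha)-\tfrac{1}{q}\norm{DF}^2_{\mathfrak{H}}}^2}^{1/2}$ appear. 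Moreover, the remainder $\E{\mathds{1}_{\{F+\alpha>x\}}T_{k+1}}$ is not "the deficiency times a remainder": its terms involve the contractions $\Lambda(l,m)$, and dominating $\E{\norm{\Lambda(l,m)}^2_{\mathfrak{H}^{\otimes l+m-2}}}$ by $\E{\Theta^2}$ is a separate chaos computation (Lemma~\ref{lemma_DkDlbound}, estimate \eqref{estimate_DkDlbymalliavindifference}) based on the product formula — and this is precisely where the evenness of $q$ is used, a point your proposal never engages with even though the theorem's hypothesis singles it out. Your integrability bookkeeping and the treatment of $x=0$ via $\alpha>k+1$ are consistent with the paper, but without the decomposition lemma, the Stein analysis, and the contraction estimate, the proposed argument cannot be completed.
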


In the second case, namely when $F$ is a general random variable, in
the sense that it has a possibly infinite Wiener chaos expansion, our main
result is an estimate on the distance  of the density of $F$ and the
Gamma density. The estimate here is not as simple or explicit as in the case of   random variables living in a single Wiener chaos since it cannot be expressed as a linear combination of moments. This estimate
is the object of Theorem \ref{theorem_gammaconvergence_sumofchaos},
which we do not restate here since it involves  some additional  notations.

    It is interesting to point out that in Theorems \ref{theorem_fourthmoment_singlechaos_0thderivative} and   \ref{theorem_fourthmoment_singlechaos_kthderivative} it is  assumed that  the order $q$ of the multiple Wiener integrals is always an even integer greater or equal to $2$. The reason for this  is that  in these two theorems  we aim to control the density difference by fourth moment    bounds and in order to complete this goal, we use the fact that the order $q$ is even to    obtain  two important technical estimates \eqref{estimate_D^2F-qDF} and \eqref{estimate_DkDlbymalliavindifference}. We are not sure  if these technical estimates are still true when $q$ is an odd integer.  Another  reason is a well-known fact: any multiple integral of an odd order has a third moment equal to $0$, while we know $\E{\mathcal{G}_\alpha^3}=\alpha(\alpha+1)(\alpha+2)$. This means that  if $\{F_n:n\in\N\}$ is a sequence of multiple Wiener integrals of some fixed odd order, then $F_n$ (which is centered by definition) cannot converge to the centered Gamma random variable $\mathcal{G}_\alpha-\alpha$. On the other hand, if one 
    is  interested in controlling   differences of densities by using
    the Malliavin derivatives, then one can use Theorem \ref{theorem_gammaconvergence_sumofchaos} instead. 

    In view of the substantial improvement in \cite{herry2024superconvergence} to the four moment theorem for density convergence to a normal target in \cite{HLN14}, a reasonable question is whether the techniques in \cite{herry2024superconvergence} can be replicated in the context of density convergence to a Gamma target. We do not have a definite answer to the question, however we provide a few observations here. A significant portion of \cite{herry2024superconvergence} is devoted to bounding the negative moments of $\norm{DF_n}^2_{\frak{H}}$ where $F_n$ belongs to a sequence of multiple Wiener integrals of a fixed order, which then allows the authors of \cite{herry2024superconvergence} to deduce density convergence of $F_n$ to a normal target. We invite readers to look at the aforementioned reference to appreciate their novel use of spectral techniques to bound the negative moments of $\norm{DF_n}^2_{\frak{H}}$. In the case of density convergence of $F_n$ to a gamma target, our Theorem \ref{theorem_fourthmoment_singlechaos_0thderivative} suggests that one might also need to bound the negative moments of $F_n$. In addition, Theorem 9 of \cite{herry2024superconvergence} is about a sequence of sum of multiple Wiener integrals $F_n=\sum_{i=1}^m J_i(F_n)$ (where $m$ is a non-zero positive integer and $J_i$ denotes the projection into the $i$-th Wiener chaos) and assumes normal convergence along the Wiener chaos of highest order: $J_m(F_n)\to N(0,1)$ in laws. Therefore, even if an analog result can be derived for Gamma density convergence, it is not comparable to our Theorem \ref{theorem_gammaconvergence_sumofchaos} which can be applied to random variables that have infinite Wiener chaos decomposition, and  does not require convergence along the Wiener chaos of highest order.



Let us now give a summary of our methodology. To prove  Theorem \ref{theorem_fourthmoment_singlechaos_0thderivative}, 
  we first derive in Proposition \ref{prop_densityrepgeneral} a density formula for random variables in the setting of Markov diffusion generators, which covers more than just random variables on Wiener chaos. Then by specializing our result to the case of the Laguerre generator, we obtain the following formula for the density of $\mathcal{G}_\alpha$. 
\begin{align*}
    p_{\mathcal{G}_\alpha}(x)= \E{\mathds{1}_{\{\mathcal{G}_\alpha>x\} }\brac{1+\frac{1-\alpha}{\mathcal{G}_\alpha}} }. 
\end{align*}
We can use the well-known  integration by parts formula 
\begin{align*}
	p_{F+\alpha}(x)=\E{\mathds{1}_{\{F+\alpha>x \}}\delta\brac{ \frac{DF}{\norm{DF}^2_\frak{H}}  }}
\end{align*}
to represent the density for $F+\alpha$, 
where $\delta$ is the adjoint of the Malliavin derivative $D$. 
Our next goal is to bound the difference of the two terms on the right hand side of 
\begin{align*}
    p_{F+\alpha}(x)-p_{\mathcal{G}_\alpha}(x)= \E{\mathds{1}_{\{F+\alpha>x \}}\delta\brac{ \frac{DF}{\norm{DF}^2_\frak{H}}  }}- \E{\mathds{1}_{\{\mathcal{G}_\alpha>x\} }\brac{1+\frac{1-\alpha}{\mathcal{G}_\alpha}} }\,.  
\end{align*}
However,  these two terms appear to be completely different, it is natural to wonder
how they can be close each other. To control effectively this difference 
we decompose the first term on the above  right hand side as 
\begin{align*}
    \E{\mathds{1}_{\{F>x \}}\delta\brac{ \frac{DF}{\norm{DF}^2_\frak{H}}  }}= \E{\mathds{1}_{\{F+\alpha>x\} }\brac{1+\frac{1-\alpha}{F+\alpha}} }+\E{\mathds{1}_{\{F+\alpha>x\} }T_1}\,, 
\end{align*}
where the remainder term $T_1$ is specified at \eqref{equation_T1} in Section 3. This implies 
\begin{align}
    \label{step_differencedensity}
    \nonumber&\abs{p_{F+\alpha}(x)-p_{\mathcal{G}_\alpha}(x)}\\
   & \leq \abs{\E{\mathds{1}_{\{F+\alpha>x\} }\brac{1+\frac{1-\alpha}{F+\alpha}} }-  \E{\mathds{1}_{\{\mathcal{G}_\alpha>x\} }\brac{1+\frac{1-\alpha}{\mathcal{G}_\alpha}} }} +\E{\mathds{1}_{\{F+\alpha>x\} }T_1}. 
\end{align}
Now the two terms inside the absolute value sign appear to have similar pattern and   will be bounded by Stein's method,  while the second  term $\E{\mathds{1}_{\{F+\alpha>x\} }T_1}$ will be bounded by using the product formula for multiple Wiener integrals, which involves  the assumption that  the order of the multiple Wiener integral $F$ is an even number in order to obtain the estimate \eqref{estimate_D^2F-qDF}.  An intermediate goal is to    deduce from \eqref{step_differencedensity} the density bound by using the Malliavin derivative: 
\begin{align*}
    \abs{p_{F+\alpha}(x)-p_{\mathcal{G}_\alpha}(x)}\leq P_0(x) \E{\brac{q(F+\alpha)-\norm{DF}^2_\frak{H}}}^{1/2},
\end{align*}
where $P_0(x)$ is the quantity containing negative moments stated in Theorem \ref{theorem_fourthmoment_singlechaos_0thderivative}. As the final step in the proof of Theorem \ref{theorem_fourthmoment_singlechaos_0thderivative}, we bound $\E{\brac{q(F+\alpha)-\norm{DF}^2_\frak{H}}}$ with a combination of the first four moments of $F$ using known results in   \cite{NP09main} and  \cite{ACP14}.

Next, we extend the above result in two other directions. First, for any positive integer $k$ and under appropriate conditions such as   the multiple integral $F$ is $k$-times Malliavin differentiable, we provide a four moment estimate for the $k$-th derivatives of the densities, that is $p_F^{(k)}(x)-p_{\mathcal{G}_\alpha}^{(k)}(x)$ (i.e. Theorem 
	\ref{theorem_fourthmoment_singlechaos_kthderivative}). 
The first key point is to write $ 
	p^{(k)}_{\mathcal{G}_\alpha}(x)=\E{\mathds{1}_{\{\mathcal{G}_\alpha>x\}}\nu_{k+1}(\mathcal{G}_\alpha) }$ 
with  $\nu_{k+1}$ being  given by \eqref{def_nuk} (see Corollary \ref{corollary_densitygamma})
and then one needs to decompose   $p_F^{(k)}(x)$ into terms of similar pattern plus ``higher order" terms.
These ideas  miraculously work out successfully. 
The second extension is  to replace the single chaos random variables by general ones that have  possibly infinite Wiener chaos expansions for which we  bound the difference of   densities by Malliavin derivatives  via the Poincar\'e inequality.

The rest of the paper is organized as follows: Section
\ref{prelimsection} contains preliminaries on the framework of Markov diffusion generators and the basics of Malliavin calculus that we need, while Section \ref{densityformulasection} compiles our
results on representation of densities and their derivatives for the
Gamma distribution and multiple Wiener integrals. Section \ref{sectionsteinmethod}
introduces and develops a version of Stein's method for Gamma
approximations containing a fine study of the solution to the
associated Stein equation. Our version of Stein's method and the
study of the solution to the Stein equation are different from that 
 in \cite{NP09main, DP18} and are of independent
interest.  Section \ref{sectionfixedchaos} contains our main results for random
variables living in a fixed Wiener chaos and Section \ref{section_generalrv} presents
our findings for general random variables with possibly infinite chaos
expansion. Finally, Section \ref{sectionapplications} contains an application of our
 result: a quantitative comparison between the Gamma density and the density of
random variables living in the second Wiener chaos (which is a Wiener
chaos known to contain Gamma distributions). 

\section{Preliminaries}
\label{prelimsection}
\subsection{Markov diffusion generators}
~\
\label{section_Markovdiffusionintro}

In Section \ref{densityformulasection}, we will provide a density
representation for random variables in the setting of Markov diffusion
generators and then deduce a density representation for the gamma
distribution from the previous result. We present here a brief summary
of the framework of Markov diffusion generators based on
\cite{bakry2013analysis}.

The framework consists of some underlying reversible diffusive Markov process $\left\{
  X_t\colon t\geq 0 \right\}$ with
an invariant probability measure $\nu$, associated semigroups $\left\{ P_t\colon t\geq
0\right\}$, infinitesimal
generator $L$ and carré du champ $\Gamma$, where all of these objects
are inherently connected. From an abstract
point of view, a standard and elegant way to introduce this setting is through so
called Markov triples, where one starts from the invariant measure $\nu$, the
carré du champ $\Gamma$ and a suitable algebra of functions (random
variables), from which the generator $L$, the semigroup $\left\{ P_t\colon t\geq
0\right\}$ (including
their $L^2$-domains) and thus also
the Markov process $\left\{ X_t\colon t\geq
0\right\}$ are constructed.

The assumptions we   make on $(E,\nu,\Gamma)$  here are those of the  so-called 
\emph{diffusion properties}
 in the sense of \cite[Part I,
Chapter 3]{bakry2013analysis}, which we summarize below together with some useful consequences of the assumptions. 
\begin{enumerate}[label=\roman*)]
\item $(E,\mathcal{F},\nu)$ is a probability space and $L^2(E,\mathcal{F},\nu)$ is separable.
\item $\mathcal{A}$ is a vector space of real-valued random variables on $(E,\mathcal{F},\nu)$. It is stable under products and under the action of
 smooth functions $\Psi \colon \R^k \to \R$.
\item $\mathcal{A}_0$ as a sub-algebra of $\mathcal{A}$ contains bounded functions which are dense in $L^p(E,\nu)$ for every $p \in [1,\infty)$. We assume
  that $\mathcal{A}_0$ is also stable under the action of smooth functions $\Psi$
  as above and also that $\mathcal{A}_0$ is an ideal in $A$ (if $F\in
  \mathcal{A}_0$ and $G \in \mathcal{A}$, then $FG \in \mathcal{A}_0$). 
\item The carr\'{e} du champ operator $\Gamma \colon \mathcal{A}_0 \times \mathcal{A}_0 \to
  \mathcal{A}_0$ is a bi-linear symmetric map such that $\Gamma(F) \geq 0$ for all $F \in
  \mathcal{A}_0$. For every $F \in \mathcal{A}_0$ there exists a finite constant $c_F$ such
  that for every $G \in \mathcal{A}_{0}$
  \begin{equation*}
 \abs{\E{\Gamma(F,G)}}    \leq c_F \norm{G}_2,
  \end{equation*}
  where $\norm{G}_2^2 = \int_E G^2 d\nu$.
  The Dirichlet form $\mathcal{E}$ is defined on $\mathcal{A}_0 \times \mathcal{A}_0$ by
  \begin{equation*}
    \mathcal{E}(F,G) = \E{\Gamma(F,G)}.
  \end{equation*}
  \item
  The domain $\operatorname{Dom}(\mathcal{E}) \subseteq L^2(E,\nu)$ is obtained by completing
  $\mathcal{A}_0$ with respect to the norm $\norm{F}_{\mathcal{E}} = \left(
    \norm{F}_2 + \mathcal{E}(F,F) \right)^{1/2}$. The Dirichlet form 
  $\mathcal{E}$ and the carr\'{e} du champ operator $\Gamma$ are extended to
  $\operatorname{Dom}(\mathcal{E}) \times \operatorname{Dom}(\mathcal{E})$ by continuity and
  polarization. We thus have that $\Gamma \colon \operatorname{Dom}(\mathcal{E}) \times
  \operatorname{Dom}(\mathcal{E}) \to L^1(E,\nu)$.
\item $L$ is a self-adjoint linear operator, defined on $\mathcal{A}_0$
  via the \emph{integration by parts formula}
  \begin{equation}
    \label{intbyparts_gammaandL}
  \E{GLF}=\E{FLG}  = -\E{\Gamma(F,G)}
  \end{equation}
  for all $F,G \in \mathcal{A}_0$. We assume that $L(\mathcal{A}_0) \subseteq
  \mathcal{A}_0$ and that for every $F\in \mathcal{A}_0$, $ \E{LF}=0$. Moreover, the first part of the above equation is known as the reversible property of $L$ and is related to reversibility
in time of the associated Markov process whenever the initial law is the invariant measure $\nu$.

\item The domain $\operatorname{Dom}(L) \subseteq \operatorname{Dom}(\mathcal{E})$ 
  consists of all $F \in \operatorname{Dom}(\mathcal{E})$ such that
  \begin{equation*}
    \abs{\mathcal{E}(F,G)} \leq c_F \norm{G}_2
  \end{equation*}
  for all $G \in \operatorname{Dom}(\mathcal{E})$, where $c_F$ is a finite constant. The
  operator $L$ is extended from $\mathcal{A}_0$ to $\operatorname{Dom}(L)$ by the
  integration by parts formula~\eqref{intbyparts_gammaandL}. The norm in this domain is defined as 
  \begin{align*}
      \norm{F}_{\operatorname{Dom}(L)}=\brac{\norm{F}_2^2+\norm{LF}_2^2}^{1/2}.
  \end{align*}

  Moreover, we assume that $L 1 = 0$ and
  that $L$ is ergodic: $LF =0$ implies that $F$ is constant for all $F \in \operatorname{Dom}(L)$.
\item The operator $L \colon \mathcal{A} \to \mathcal{A}$ is an extension of $L
  \colon \mathcal{A}_0 \to \mathcal{A}_0$. On $\mathcal{A} \times \mathcal{A}$, the
  carr\'{e} du champ $\Gamma$ is defined by
  \begin{equation*}
    \label{def_gammabyL}
    \Gamma(F,G) := \frac{1}{2} \left( L(FG) - FLG - GLF \right).
  \end{equation*}

\item For all $F \in \mathcal{A}$, we assume $\Gamma(F) \geq 0$ with equality if, and only
  if, $F$ is constant.
\item The \emph{diffusion property} holds. For all $\mathcal{C}^{\infty}$-functions
  $\Psi \colon \R^p \to \R$ and $F_1,\dots,F_p,G \in \mathcal{A}$ one has
  \begin{equation}
    \label{diffusionproperty_gamma}
    \Gamma \left( \Psi \left( F_1,\dots,F_p \right),G \right)
    =
    \sum_{j=1}^p \partial_j \Psi(F_1,\dots,F_p) \, \Gamma(F_j,G)
  \end{equation}
  and
  \begin{equation*}
    \label{eq:30}
    L \Psi(F_1,\dots,F_p) = \sum_{i=1}^p \partial_i \Psi(F_1,\dots,F_p) L F_{i}
    +
    \sum_{i,j=1}^p \partial_{ij} \Psi(F_1,\dots,F_p) \, \Gamma(F_i,F_j).
  \end{equation*}
This diffusion property implies that the semigroup associated with the Dirichlet form $ \mathcal{E}(F, F)$ 
is positive preserving  since 
\[
 \mathcal{E}( \Psi(F), \Psi(F) )
=\mathbb{E}\left[ \Gamma\left( \Psi(F), \Psi(F)
\right)\right] 
=\mathbb{E}\left[ \left(\Psi'(F)\right)^2 \Gamma(F, F)\right] 
 \le \mathcal{E}( F, F )
 \]
 if $|\Psi'|\le 1$ (e.g. 
 \cite[Equation (3.1.12)]{bakry2013analysis}). 
 
\item The integration by parts formula~\eqref{intbyparts_gammaandL} continues to hold if $F \in
  \mathcal{A}$ and $G \in \mathcal{A}_0$ (or vice versa).
  \item The eigen-functions of $L$ in $\operatorname{Dom}(L)$ form the eigen-subspaces of $L$ known as Markov diffusion chaos. 
     
\end{enumerate}

For an $F\in \range(L)$ with $\mathbb{E}(F)=0$,  
we define $L^{-1}F=G$ if $F=LG$.  This is
well-defined since if $LG_1=LG_2$,   or $L(G_1-G_2)=0$,
then $\mathbb{E}(G_2-G_1)=0$ by our ergodic assumption on $L$ 
(part (vii)).   In general, for any $F\in \range(L)$,
we define $L^{-1} F=L^{-1} (F-\mathbb{E}(F))$.  It is easy to verify  that 
  the  inverse $L^{-1}$   satisfies that  for any $F\in \range (L)$  
  \begin{align}
      \label{relation_Landinverse}
      LL^{-1}F=L^{-1}LF=F-\E{F}\,. 
  \end{align}
%


\begin{example}
\label{example_laguerre}
    An example which we will revisit in Corollary \ref{corollary_densitygamma} is the Full Markov triple associated with the one-dimensional Laguerre process (\cite[Section 2.7.3]{bakry2013analysis}): $E=\R$, $\nu$ is a gamma distribution with density $\frac{1}{\Gamma(\alpha)}x^{\alpha-1}e^{-x},\alpha>0$ and $\Gamma(f)(x)=x(f'(x))^2$, with $x$ distributed as $\nu$. The Laguerre generator is given by $Lf(x)=xf''(x)+(\alpha-x)f'(x)$ and a suitable algebra $\mathcal{A}$ is the set of polynomials. Moreover, regarding the Laguerre semigroups, one can find a Mehler-type formula for them in \cite{arras2017stroll}. Finally, the eigen-functions of $L$ are the Laguerre polynomials $\{Q_n:n\geq 1 \}$ defined by
    \begin{align*}
       Q_n(x)=\frac{x^{-\alpha}e^x}{n!}\frac{d^n}{dx^n}\brac{e^{-x} x^{n+\alpha}}. 
    \end{align*}
\end{example}

In the upcoming section, we will give an overview of Malliavin calculus on Gaussian space. In fact, it is another example of a Full Markov Triple. However, it does have additional structure such as the product formula that is not standard for a Full Markov Triple. We will introduce Malliavin calculus in a standard way following \cite{nualart2006malliavin} and only at the end relates it back to the framework of Markov diffusion generators. 
\subsection{Malliavin calculus on Gaussian space}
\label{malliavinsubsection}
~\

Let $\mathfrak{H}$ be a real separable Hilbert space with orthonormal basis $\{e_i:i\in \N \}$ and $\left\{ Z(h)
\colon h \in \mathfrak{H} \right\}$ an {isonormal Gaussian process} defined on some probability space $(\Omega,\mathcal{F},P)$ and
indexed on $\mathfrak{H}$, that is, a centered Gaussian family of random variables 
such that 
\begin{align*}\E{Z(h)Z(g)} = \left\langle h,g \right\rangle_{\mathfrak{H}}.\end{align*} 
The associated probability space $(\Omega,\sigma(Z),P)$ is called a {Gaussian space}. The {Wiener chaos} of order $n$, denoted by $\mathcal{W}_n$, is the closed linear span
of the random variables $\left\{ H_n(Z(h)) \colon h \in \mathfrak{H},\
\norm{h}_{\mathfrak{H}}=1 \right\}$ where $H_{n}$ is the $n$-th Hermite
polynomial given by $H_{0}=1$ and for $n\geq 1$,
\begin{equation*}
	H_{n}(x)=\frac{(-1)^{n}}{n!} \exp \left( \frac{x^{2}}{2} \right)
	\frac{d^{n}}{dx^{n}}\left( \exp \left( -\frac{x^{2}}{2}\right)
	\right), \quad x\in \mathbb{R}.
\end{equation*} 
Wiener chaos are the building blocks of the Gaussian space as we have the orthogonal decomposition
\begin{align}
	\label{decomposition_hermite}
	L^2(\Omega,\sigma(Z),P)= \bigoplus_{n=0}^\infty \mathcal{W}_n. 
\end{align}
Next, we will introduce {multiple Wiener integrals} with respect to the Gaussian process $Z$. In order to do that, let us first explain what it means by symmetrization of tensor products. For an integer $q\geq 2$, the expansion of $f\in\mathfrak{H}^q$ is given by \\$\sum_{i_1,\ldots,i_n=1}^\infty a(i_1,\ldots,i_n)e_{i_1}\ldots e_{i_n}$. Then the symmetrization of $f$, denoted by $\tilde{f}$, is the element in $\mathfrak{H}^{\odot n}$ given by
\begin{align*}
	\tilde{f}= \frac{1}{n!}\sum_{\sigma\in S_n} \sum_{i_1,\ldots,i_n=1}^\infty a(i_1,\ldots,i_n)e_{i_{\sigma(1)}}\ldots e_{i_{\sigma(n)}}
\end{align*}  
where $S_n$ the symmetric group of order $n$. Now, let $\Lambda$ be the set of sequence $a=\brac{a_1,a_2,\ldots}$ such that only a finite number of $a_i$ are non-zero. Set $a!=\prod_{i=1}^\infty a_i!$ and $\abs{a}=\sum_{i=1}^\infty a_i$. For $a\in \Lambda$ such that $\abs{a}=n$, the multiple integral $I_n$ is defined via
\begin{align*}
I_n\brac{\operatorname{sym}\brac{\otimes_{i=1}^\infty e_i^{\otimes a_i}}}=\sqrt{a!}\prod_{i=1}^\infty H_{a_i}(Z(e_i))
\end{align*}
and 
\begin{align*}
	\operatorname{sym}\brac{\otimes_{i=1}^\infty e_i^{\otimes a_i}}=\frac{1}{n!}\sum_{\sigma\in S_n } \otimes_{k=1}^n
	 e_{j_{\sigma(k)} } \,, 
\end{align*}  
where  if all $a_i=0$ except $a_{i_1}, \cdots, a_{i_k}$,
then we denote $e_{j_1}= e_{i_1}, \cdots, e_{j_{a_{i_1}}}=e_{i_1}$, $e_{j_{a_{i_1}+1}}=e_{i_2}, 
\cdots,  e_{j_{a_{i_1}+a_{i_2}  }}=  e_{i_2},\cdots,
e_{j_{a_{i_1}+\cdots+  a_{i_{k-1}} +1 }}= e_{i_k}, \cdots,  e_{j_{n
  }}=e_{i_k}$.
In fact, $I_{n}$ is an isometry map between the space of
symmetric tensor products $\mathfrak{H}^{\odot n}$  equipped with the scaled norm
$\frac{1}{\sqrt{n!}}\norm{\cdot}_{\mathfrak{H}^{\otimes n}}$ and the
Wiener chaos of order $n$. This, combined with the decomposition \eqref{decomposition_hermite}, implies that any $F\in L^2(\Omega)$ can be expanded into an orthogonal sum of multiple Wiener integrals, that is
\begin{equation}
	\label{wienerdecompose} 
 F=\sum_{n=0}^\infty I_{n}(f_{n}),
\end{equation}
where $f_{n}\in \mathfrak{H}^{\odot n}$ are (uniquely determined) symmetric
functions and $I_{0}(f_{0})=\mathbb{E}\left(  F\right)$. 

Next, to extend the definition of multiple Wiener integrals to non-symmetric $f\in \frak{H}^{\otimes n}$, we simply define the action of $I_n$ on the basis elements of $\frak{H}^{\otimes n}$ as
\begin{align*}
    I_n\brac{\otimes_{i=1}^\infty e_i^{\otimes a_i}}	=I_n\brac{\operatorname{sym}\brac{\otimes_{i=1}^\infty e_i^{\otimes a_i}}}. 
\end{align*}

An important tool when dealing with multiple Wiener integrals is the {product formula}. For $f\in \mathfrak{H}^{\otimes n}$,
$g\in \mathfrak{H}^{\otimes m}$ and $0\leq r\leq m\wedge n$, the $r$-th contraction of $f$ and $g$  is
\begin{equation*}
	f\otimes _r g := \sum_{i_1,\ldots , i_r =1}^{\infty} \left\langle f,
	e_{i_1}\otimes \ldots \otimes e_{i_r}
	\right\rangle_{\mathfrak{H}^{\otimes r}} \otimes \left\langle g,
	e_{i_1}\otimes \ldots \otimes e_{i_r}
	\right\rangle_{\mathfrak{H}^{\otimes r}}. 
\end{equation*}
Then the product formula reads
\begin{equation}
	\label{prodformula}
	I_n(f)I_m(g)= \sum _{r=0}^{n\wedge m} r! \binom{n}{r}\binom{m}{r} I_{m+n-2r}(f\otimes _r g),
\end{equation}

We are now ready to introduce several important linear operators via the Gaussian chaos decomposition. The {Ornstein-Uhlenbeck semigroup} $\{P_t:t\geq 0\}$ is a semigroups of contraction operators that acts on a random variable $F$ with chaos decomposition \eqref{wienerdecompose} via
\begin{equation*}
	P_t F:=\sum_{n=0}^{\infty} e^{-nt}I_{n}(f_{n}).
\end{equation*}
Denote by $L$ the {infinitesimal generator} of our semigroups. The action of $L$ on a random variable $F$ of the form \eqref{wienerdecompose} and such
that $\sum_{n=1}^{\infty} n^{2}n! \norm{f_n}^2_{\mathfrak{H}^{\otimes n}}<\infty$ is given by
\begin{equation*}
	LF=-\sum_{n=1}^{\infty} nI_{n}(f_{n}).
\end{equation*}
Moreover, we can introduce a pseudoinverse $L^{-1}$ via spectral calculus by writing $L^{-1}F=-\sum_{n=1}^{\infty} \frac{1}{n}I_{n}(f_{n})$. Then, it immediately follows from our definition that
$LL^{-1}F = F - \E{F}$.

Let $\mathbb{D}^{k ,p }$ for $p>1$ and $k \in \mathbb{R}$ be the closure of
the set of polynomial random variables with respect to the norm
\begin{equation*}
	\Vert F\Vert _{k , p} =\Vert (I -L) ^{\frac{k }{2}} F \Vert_{L^{p} (\Omega )},
\end{equation*}
where $I$ denotes the identity operator. The spaces $\mathbb{D}^{k ,p }$ are known as {Sobolev-Watanabe spaces}.  Then, for any $p\geq 1$, the {Malliavin derivative} $D$ with respect to the Gaussian process $Z$ is a closable and continuous operator from $\mathbb{D}^{1,p} $ into $L^p(\Omega,\mathfrak{H})$, such that for $r\in \N$ and $f_n\in \mathfrak{H}^{\otimes n}$,
\begin{align*}
	DI_n(f_n)=n I_{n-1}(f_n).
\end{align*}
Alternatively, for a smooth random variable of the form $F=g(Z(h_1),\ldots , Z(h_n))$, where $g$ is a smooth function with compact support, and $h_i \in \mathfrak{H}$, $1 \leq i \leq n$, the Malliavin deriavative $D$ takes the form
\begin{equation*}
	DF:=\sum_{i=1}^{n}\frac{\partial g}{\partial x_{i}}(Z(h_1), \ldots , Z(h_n)) h_{i}.
\end{equation*}
From this, we can derive the {chain rule} of the Malliavin derivative $D$: for $\psi:\R^m\to \R$ a continuously differentiable function with bounded partial derivatives and $F=\brac{F_1,\ldots,F_m}$ a random vector such that $F_i\in \mathbb{D}^{1,p}$, it holds that
\begin{align}
	\label{mal_chain_rule}
	D\brac{\psi(F)}=\sum_{i=1}^m \partial_i \psi(F)DF_i.
\end{align}
The Malliavin derivative $D$ has a Hilbert adjoint $\delta$ which is commonly known as the
{divergence operator} or the {Skorokhod integral}. By Riesz's lemma, an element $u\in L^2(\Omega,\mathfrak{H})$
belongs to $\on{dom}(\delta)$ only if there exists a constant $C_u$
depending only on $u$ such that
$\abs{\E{\inner{DF,u}_\mathfrak{H}}}\leq C_u\norm{F}_{L^2(\Omega)}$
for any $F\in \mathbb{D}^{1,2}$. In this case, we have the {integration by parts formula} (or duality relation)
\begin{align}
	\label{intbypart_malliavin}
	\E{F\delta(u)}=\E{\inner{DF,u}_\mathfrak{H}}
\end{align}
for any $F\in \mathbb{D}^{1,2}$.
 
 $D,\delta,L$ are related via 	
\begin{equation*}
	LF = -\delta D F,
\end{equation*}
which applies to $F\in \on{dom}L =\mathbb{D}^{2,2}$ if and only if $F \in \on{dom}(\delta D)$ (that is $F \in \mathbb{D}^{1,2}$ and $DF \in\on{dom}(\delta)$).

At this point, let us state two important results that are respectively {hypercontractivity} of the Ornstein-Uhlenbeck semigroups $\{P_t:t\geq 0\}$ and the Poincar\'e  inequality. 
\begin{lemma}(\cite[Theorem 2.8.12]{nourdinpeccatibook})
\label{lemma_hypercontract}
    Assume $p>1,t>0$ and $q(t)=e^{2t}(p-1)+1$. If $F\in L^p(\Omega)$ then
\begin{align*}
	\norm{P_t F}_{q(t)}\leq \norm{F}_p. 
\end{align*}
\end{lemma}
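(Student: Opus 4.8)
The statement is Nelson's hypercontractivity theorem for the Ornstein--Uhlenbeck semigroup, so rather than merely invoke the reference I sketch the Gross-type argument, which fits naturally into the diffusion framework of Subsection \ref{section_markovdiffusionintro}. The plan is to deduce hypercontractivity from the logarithmic Sobolev inequality by a monotonicity argument along the exponent flow $t\mapsto q(t)$. As a preliminary reduction, since $T_t$ is a Markov (positivity- and unit-preserving) semigroup we have $\abs{T_tF}\leq T_t\abs{F}$ pointwise, whence $\norm{T_tF}_{q(t)}\leq\norm{T_t\abs{F}}_{q(t)}$, so it suffices to treat $F\geq 0$. Replacing $F$ by $F+\varepsilon$ (note $T_t(F+\varepsilon)=T_tF+\varepsilon$ since $T_t 1=1$) and letting $\varepsilon\downarrow 0$ at the end, we may further assume $F$ is bounded below by a positive constant and smooth enough to differentiate under the expectation; by density of nice random variables in $L^p$ this is harmless.

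The key analytic input is the logarithmic Sobolev inequality for the underlying diffusion,
\begin{align*}
\on{Ent}(G^2):=\E{G^2\log G^2}-\E{G^2}\log\E{G^2}\leq 2\,\E{\Gamma(G,G)}=-2\,\E{GLG},
\end{align*}
valid for all $G$ in the domain, the second equality being the integration by parts formula \eqref{intbypartgammaandL}. In the Gaussian/OU setting this holds with the sharp constant $2$; it can be obtained either from the two-point inequality together with tensorization and a central-limit passage, or, more in the spirit of the present framework, from the Bakry--Émery criterion, since the curvature-dimension condition $\Gamma_2\geq\Gamma$ (i.e.\ $CD(1,\infty)$) holds for the Ornstein--Uhlenbeck generator.

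Set $\psi(t)=\norm{T_tF}_{q(t)}$ with $q(t)=e^{2t}(p-1)+1$ and $N(t)=\E{(T_tF)^{q(t)}}$, so that $\log\psi=\tfrac1{q}\log N$. Differentiating and using $\tfrac{d}{dt}T_tF=LT_tF$ gives
\begin{align*}
N'(t)=q'(t)\,\E{(T_tF)^{q}\log(T_tF)}+q(t)\,\E{(T_tF)^{q-1}\,LT_tF}.
\end{align*}
For the second term, integration by parts \eqref{intbypartgammaandL} and the diffusion property \eqref{diffusionpropertygamma} of $\Gamma$ yield $\E{(T_tF)^{q-1}LT_tF}=-(q-1)\E{(T_tF)^{q-2}\Gamma(T_tF,T_tF)}$; setting $G=(T_tF)^{q/2}$ and using $\Gamma(G,G)=\tfrac{q^2}{4}(T_tF)^{q-2}\Gamma(T_tF,T_tF)$, one finds after collecting terms that $\tfrac{d}{dt}\log\psi(t)\leq 0$ is equivalent to
\begin{align*}
q'(t)\,\on{Ent}(G^2)\leq 4\bigl(q(t)-1\bigr)\,\E{\Gamma(G,G)}.
\end{align*}
The choice of $q$ is precisely what makes this work: $q'(t)=2\bigl(q(t)-1\bigr)$, so the right-hand side equals $2q'(t)\,\E{\Gamma(G,G)}$ and the displayed inequality reduces exactly to the logarithmic Sobolev inequality applied to $G$. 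Hence $\psi$ is nonincreasing, and since $q(0)=p$ we conclude $\norm{T_tF}_{q(t)}=\psi(t)\leq\psi(0)=\norm{F}_p$.

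The main obstacle is twofold: establishing the sharp logarithmic Sobolev inequality — the genuine analytic content, concealed in the $\Gamma_2$ computation or the tensorization argument — and rigorously justifying the differentiation step, namely interchanging derivative and expectation and controlling the terms $\log(T_tF)$ and $(T_tF)^{q-2}$ near zero, which is exactly what forces the preliminary reduction to $F$ bounded away from $0$ before the final passage to the limit.
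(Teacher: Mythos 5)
Your proposal is correct in outline, but it takes a genuinely different route from the paper, because the paper gives no proof at all: Lemma \ref{lemma_hypercontract} is stated purely by citation to \cite[Theorem 2.8.12]{nourdinpeccatibook}, and is then used only as a black box (to get equivalence of $L^p$ norms on a fixed chaos). What you supply is the classical Gross semigroup argument: reduce to $F\geq\varepsilon>0$, set $\psi(t)=\norm{T_tF}_{q(t)}$, differentiate $\log\psi$, and observe that the choice $q(t)=e^{2t}(p-1)+1$, i.e. $q'=2(q-1)$, makes the monotonicity $\psi'\leq 0$ exactly equivalent to the logarithmic Sobolev inequality $\on{Ent}(G^2)\leq 2\,\E{\Gamma(G,G)}$ applied to $G=(T_tF)^{q/2}$; your exponent bookkeeping here is accurate (including the identity $\E{(T_tF)^{q-1}LT_tF}=-\tfrac{4(q-1)}{q^2}\E{\Gamma(G,G)}$ via the diffusion property \eqref{diffusionpropertygamma} and integration by parts \eqref{intbypartgammaandL}). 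What your approach buys is coherence with the Markov-triple framework of Subsection \ref{section_markovdiffusionintro}: the same argument gives hypercontractivity for any Full Markov Triple satisfying a log-Sobolev inequality, e.g. under the Bakry--\'Emery condition $\Gamma_2\geq\Gamma$, of which the Ornstein--Uhlenbeck case (where $\Gamma(F,F)=\norm{DF}_{\mathfrak{H}}^2$) is an instance. What it costs is that the two genuinely hard points are left as acknowledged gaps rather than proved: the sharp Gaussian LSI with constant $2$, and the justification of differentiating $t\mapsto\E{(T_tF)^{q(t)}}$ under the expectation (integrability of $(T_tF)^{q}\log(T_tF)$ and of the $\Gamma$ term, continuity in $t$), which is precisely the technical content hidden in the textbook proof you are replacing. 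One cosmetic point: the lemma is stated for $T_t$ while the paper's Subsection \ref{malliavinsubsection} calls the Ornstein--Uhlenbeck semigroup $P_t$ (with a sign/indexing typo in its displayed definition); your proof implicitly uses the correct Mehler-form semigroup, which is positivity preserving with $T_t1=1$, so your reductions $\abs{T_tF}\leq T_t\abs{F}$ and $T_t(F+\varepsilon)=T_tF+\varepsilon$ are legitimate for that object.
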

In particular, this implies for $p>1$,  any two $L^p$ norms of a random variable $F$ inside a single Wiener chaos are equivalent, and consequently $F$ has moments of all orders.

\begin{lemma}(\cite[Lemma 5.3.7, Part 3]{nourdinpeccatibook})
\label{lemma_poincare}
    Let $p\geq 2$ be an even integer and $F\in \mathbb{D}^{1,p}$. Then
    \begin{align*}
        \E{\brac{F-\E{F}}^p}\leq (p-1)^{p/2} \E{\norm{DF}^p_\frak{H}}. 
    \end{align*}
    
\end{lemma}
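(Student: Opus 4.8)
The plan is to prove the inequality directly from the Malliavin calculus machinery introduced above, routing through the pseudo-inverse $L^{-1}$ and the Ornstein--Uhlenbeck semigroup. Write $G=F-\E{F}$, so that $\E{G}=0$ and $DG=DF$; the claim becomes $\E{G^p}\leq (p-1)^{p/2}\E{\norm{DG}_{\mathfrak{H}}^p}$. Since $F\in\mathbb{D}^{1,p}$ both sides are finite, and I may assume $\E{G^p}>0$, as otherwise there is nothing to prove.

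First I would set up the fundamental integration-by-parts identity. Using $LL^{-1}G=G-\E{G}=G$ together with $L=-\delta D$, one has $G=-\delta\brac{DL^{-1}G}$. Writing $\E{G^p}=\E{G^{p-1}\cdot G}$, applying the duality relation \eqref{intbypart_malliavin} with $u=DL^{-1}G$, and then the chain rule \eqref{mal_chain_rule} in the form $D\brac{G^{p-1}}=(p-1)G^{p-2}DG$, yields
\begin{align*}
\E{G^p}=(p-1)\,\E{G^{p-2}\,\inner{DG,-DL^{-1}G}_{\mathfrak{H}}}.
\end{align*}

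The heart of the argument is to control the inner product $\inner{DG,-DL^{-1}G}_{\mathfrak{H}}$. Here I would invoke the Mehler-type representation $-DL^{-1}G=\int_0^\infty e^{-t}P_t(DG)\,dt$, which follows by comparing chaos expansions and the commutation relation $DP_t=e^{-t}P_t D$. A pointwise Cauchy--Schwarz inequality in $\mathfrak{H}$, combined with the convexity bound $\norm{P_t(DG)}_{\mathfrak{H}}\leq P_t\brac{\norm{DG}_{\mathfrak{H}}}$ coming from the Mehler formula and Jensen's inequality, gives
\begin{align*}
\inner{DG,-DL^{-1}G}_{\mathfrak{H}}\leq \norm{DG}_{\mathfrak{H}}\int_0^\infty e^{-t}P_t\brac{\norm{DG}_{\mathfrak{H}}}\,dt.
\end{align*}
Substituting this bound and applying Hölder's inequality with the three exponents $\tfrac{p}{p-2},p,p$ (whose reciprocals sum to $1$) produces
\begin{align*}
\E{G^p}\leq (p-1)\,\E{G^p}^{\frac{p-2}{p}}\,\E{\norm{DG}_{\mathfrak{H}}^p}^{\frac1p}\,\E{\brac{\int_0^\infty e^{-t}P_t\brac{\norm{DG}_{\mathfrak{H}}}\,dt}^p}^{\frac1p},
\end{align*}
where I use that $p$ is even so that $G^{p}=\abs{G}^{p}$.

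To finish, I would estimate the last factor by Minkowski's integral inequality together with the fact that each $P_t$ is an $L^p$-contraction (a consequence of Lemma \ref{lemma_hypercontract}), bounding it by $\E{\norm{DG}_{\mathfrak{H}}^p}^{1/p}$. Collecting the factors gives $\E{G^p}\leq (p-1)\E{G^p}^{(p-2)/p}\E{\norm{DG}_{\mathfrak{H}}^p}^{2/p}$; dividing through by $\E{G^p}^{(p-2)/p}$ and raising to the power $p/2$ yields the claimed constant $(p-1)^{p/2}$. I expect the main obstacle to be the semigroup step: establishing the time-integral representation of $-DL^{-1}G$ and the pointwise $\mathfrak{H}$-norm contraction $\norm{P_t(DG)}_{\mathfrak{H}}\leq P_t\brac{\norm{DG}_{\mathfrak{H}}}$, which is where the genuine analytic content of the Poincaré inequality resides; the remaining steps are bookkeeping with duality, the chain rule, and Hölder's inequality.
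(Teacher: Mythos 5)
Your proof is correct, but note that the paper itself contains no proof of this lemma: it is quoted directly from \cite[Lemma 5.3.7, Part 3]{nourdinpeccatibook}, and the argument you reconstruct (integration by parts through $-DL^{-1}$, the representation $-DL^{-1}G=\int_0^\infty e^{-t}P_t(DG)\,dt$, the pointwise bound $\norm{P_t(DG)}_{\mathfrak{H}}\le P_t\brac{\norm{DG}_{\mathfrak{H}}}$, three-exponent H\"older, and $L^p$-contractivity of the semigroup) is essentially the proof given in that reference. The one step you gloss over is that applying the duality relation with $u=DL^{-1}G$ together with the chain rule to $G^{p-1}$ needs an approximation argument, since $G^{p-1}$ need not belong to $\mathbb{D}^{1,2}$ a priori when one only assumes $F\in\mathbb{D}^{1,p}$; this is standard bookkeeping and does not affect the validity of the argument.
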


Finally, let us informally relate the content of this section to the previous one. Malliavin calculus on Wiener space is in fact a Full Markov triple $\brac{E,\nu,\Gamma}$ where $E=\Omega$, $\nu$ is the laws of the Gaussian process $Z$ and $ \Gamma(F,G):=\inner{DF,DG}_\frak{H}$  for $F,G\in \mathbb{D}^{1,2}$. The chain rule of $D$ naturally induces a chain rule on $\Gamma$. The Markov diffusion chaos in this context are the Wiener chaos. One can refer to the book \cite{bouleauhirsch2010dirichlet} for more details on this connection.

\section{Density representation formulas}
\label{densityformulasection}
\subsection{Density representation formula for the Gamma distribution}
~\

Let $\brac{E,\nu,\Gamma}$ be a Full Markov triple equipped with
an infinitesimal generator $L$. In this section, we will derive a
density representation for random variables in $L^2(\nu)$.
This result will allow us to obtain density formulas for Gamma
distributed random variables in Corollary \ref{corollary_densitygamma}, which is the starting point leading to
our main results. In addition, we show in Remark \ref{remark_othertargets} how to get analogous representations for other targets beside Gamma
distributed random variables .

\begin{proposition}
\label{prop_densityrepgeneral}
Let $F\in \operatorname{Dom}(\mathcal{E}) $.
  Assume that
\begin{itemize}
    \item[(1)]  there exists some $p> 1$ such that  $\E{1/\Gamma(F)^p}<\infty$ and $\E{\Gamma(F)^{p/(p-1)}}<\infty$; 
    \item[(2)] $\tau(x)$ is the function defined by $\tau(x)=\E{\Gamma(F)|F=x}$.

    \item[(3)] the function 
    \begin{equation*}
f_0(x)=\int_a^x
      {1}/{\tau(y)}dy
\end{equation*}
for a given constant $a$ satisfies $f_0(F)\in \operatorname{Dom}(L)$.  
\end{itemize}
Then, $F$ admits a density on $\R$ the density function has the representation 
\begin{align*}
     p_F(x)=-\E{1_{\{x\leq F\}} Lf_0(F)}.
\end{align*}
\end{proposition}

\begin{proof}
%
%
Let $\psi \in C^\infty_c(\R)$ which consists of smooth functions with compact support. Then the diffusion property \eqref{diffusionproperty_gamma} of $\Gamma$ implies that 
\begin{align}
\label{firstrelation}
    \E{\psi'(F)}=\E{\frac{\Gamma\brac{F,\psi(F)}}{\Gamma(F)}}
\end{align}
{In particular,  H\"{o}lder inequality, the assumptions $\E{1/\Gamma(F)^p}<\infty$ and $\E{\Gamma(F)^{p/(p-1)}}<\infty$ indicate that $\E{\Gamma\brac{F,\psi(F)}/\Gamma(F)}\leq \E{\brac{\psi'(F)\Gamma(F)}^{p/(p-1)} }\E{\brac{{1}/{\Gamma(F)}}^p}<\infty$, so that the right hand side of \eqref{firstrelation} is well-defined.} Now, using \eqref{firstrelation}, the assumption (2) that $\Gamma(F)= \tau(F)$, the integration by part formula \eqref{intbyparts_gammaandL} and $f_0(F)\in \operatorname{Dom}(L)$, we have
\begin{align*}
  \E{\psi'(F)}=\E{\frac{\Gamma\brac{F,\psi(F)}}{\tau(F)}}=\E{\Gamma\brac{f_0(F),\psi(F)}}&=-
                \E{\psi(F)Lf_0(F)}.
\end{align*}

Next, via the fundamental theorem of calculus, we can write
\begin{align*}
    \E{\psi'(F)}&=-\E{\brac{\int_{-\infty}^F\psi'(x)dx }Lf_0(F)}=\int_\R -\E{\mathds{1}_{\{x\leq F\}} Lf_0(F)}\psi'(x)dx\,,
    \end{align*}
where in the above last equality Fubini's theorem is used.  This is legitimate  since $\psi \in C^\infty_c(\R)$ and $f_0(F)\in\operatorname{Dom}(L)\subseteq L^2(\nu)$ implies that $  \E{\abs{\mathds{1}_{\{x\leq F\}}  Lf_0(F)} }\leq \E{\abs{Lf_0(F)}^2 }^{1/2}<\infty$.

Let $a,b\in \R$. By an approximation argument, the above formula holds for $\psi(x)=\int_{-\infty}^x \mathds{1}_{[a,b]}(y)dy$ and leads to 
\begin{align*}
    \mathbb{P}(F\in [a,b])=\int_a^b -\E{\mathds{1}_{\{x\leq F\}} Lf_0(F)}dx.
\end{align*}
This implies $F$ admits a density with the representation $-\E{1_{\{x\leq F\}} Lf_0(F)}$. 
\end{proof}
More generally, we have the following result about the existence of a
density and its derivatives up to a certain order, along with representation formulas.

\begin{proposition}
\label{prop_densitygeneralformderivatives}
Let $F\in \operatorname{Dom}(\mathcal{E}) $.
 Assume that
\begin{itemize}
     \item[(1)]  there exists some $p> 1$ such that  $\E{1/\Gamma(F)^p}<\infty$;
    \item[(2)] $\tau(x)$ is the function defined by $\tau(x)=\E{\Gamma(F)|F=x}$;
    \item[(3)] the sequences $\{f_k\colon k\geq 0\}$ and $\{h_k
      \colon k\geq 1\}$ are well-defined in the following iterative way: 
    \begin{itemize}
        \item[(a)] for all $k\geq 0$, we have $f_k(F)\in \operatorname{Dom}(L)$;
        \item[(b)] there exists a constant $a$ such that
          $f_0(x)=\int_a^x {1}/{\tau(y)}dy$, and for $k\geq 1$,
    \begin{align}
    \label{def_fk}
        f_k(x)=\int_a^x \frac{h_k(y)}{\tau(y)}dy,
    \end{align}
where $h_k(x)=\E{Lf_{k-1}(F)|F=x}$;
\item[(c)] $\E{\brac{\Gamma(F)h_k(F)}^{p/(p-1)}}<\infty$. 
    \end{itemize}
    \end{itemize}

Then for every integer $k\geq 0$, the $k$-th derivative of the density of $F$ exists and has the representation
\begin{align*}
     p^{(k)}_F(x)=- \E{1_{\{x\leq F\}}Lf_k(F)}=- \E{1_{\{x\leq F\}}h_{k+1}(F)}. 
\end{align*}

\end{proposition}
\begin{proof}
%
%
We proceed by induction. For $k=0$, we refer to Proposition
\ref{prop_densityrepgeneral}. As our induction hypothesis, we assume
that for any integer $k\geq 2$,
\begin{align*}
    p_F^{(k-1)}(x)=-\E{\mathds{1}_{\{x\leq F\}}Lf_{k-1}(F) },
\end{align*}
where $f_{k-1}(x)=\int_a^x h_{k-1}(y)/\tau(y)dy$ and $h_{k-1}(x)=\E{Lf_{k-2}(F)|F=x}$. Now, suppose that
there exists  a function $h_k(x)$ satisfying $h_k(F)=Lf_{k-1}(F)$. 
 {For a smooth function $\psi$ with compact support, the diffusion property \eqref{diffusionproperty_gamma} 
of $\Gamma$ implies
\begin{align}
\label{firstrelation_derivativedensity}
    \E{\psi'(F) Lf_{k-1}(F)}=\E{\frac{\Gamma\brac{F,\psi(F)}}{\Gamma(F)}h_k(F) }. 
    \end{align}
In particular, H\"{o}lder inequality and $\E{1/\Gamma(F)^p},\E{\brac{\Gamma(F)h_k(F)}^{p/(p-1)}}<\infty$ indicate $\E{\frac{\Gamma\brac{F,\psi(F)}}{\Gamma(F)}h_k(F) }\leq \E{\brac{\psi'(F)\Gamma(F)h_k(F)}^{p/(p-1)}}\E{1/\Gamma(F)^p}<\infty$, so that the right hand side of \eqref{firstrelation_derivativedensity} is well-defined. 
 }

Next, define the function $f_k(x)$ as in \eqref{def_fk}. The assumption (2) implies   $  \tau(F)=\Gamma(F)$.   
Then we have, using \eqref{firstrelation_derivativedensity}, the diffusion property \eqref{diffusionproperty_gamma} of $\Gamma$,  the integration by part formula \eqref{intbyparts_gammaandL} and $f_k(F)\in \operatorname{Dom}(L)$, 
\begin{align*}
    \E{\psi'(F) Lf_{k-1}(F)}=\E{\frac{\Gamma\brac{F,\psi(F)}}{\tau(F)}h_k(F) }=\E{\Gamma(\psi(F),f_k(F))}=-\E{\psi(F)Lf_k(F)}.
\end{align*}

Now, via the fundamental theorem of calculus, we obtain

\begin{align*}
    \E{\psi'(F) Lf_{k-1}(F)}&=-\E{\brac{\int_{-\infty}^F \psi'(x)dx}  Lf_k(F) }=-\int_\R \psi'(x)\E{\mathds{1}_{\{x\leq F\}}Lf_k(F)}dx.
\end{align*}
The last equality is a consequence of Fubini's theorem, which can be applied since $\psi \in C^\infty_c(\R)$ and $f_k(F)\in\operatorname{Dom}(L)\subseteq L^2(\nu)\Rightarrow \E{\abs{\mathds{1}_{\{x\leq F\}}} Lf_k(F)}\leq \E{\abs{Lf_k(F)}^2 }^{1/2}<\infty$.

Finally, applying integration by parts for Riemann integrals
repeatedly yields
\begin{align}
\label{equation_densityderivativefirststep}
    \E{\psi'(F) Lf_{k-1}(F)}=(-1)^{k+1}\int_\R \psi^{(k+1)}(x) g(x)dx,
\end{align}
where $g^{(k)}(x):=\E{\mathds{1}_{\{x\leq F\}}Lf_k(F)}$. 

On the other hand, we can obtain a
different representation of $\E{\psi'(F) Lf_{k-1}(F)}$ by writing
\begin{align*}
 \E{\psi'(F) Lf_{k-1}(F)}=-\E{\Gamma\brac{\psi'(F),f_{k-1}(F)}}=-\E{\psi''(F)f'_{k-1}(F) \Gamma(F)},
\end{align*}
where we have once again used integration by parts of $\Gamma$ and the diffusion
property. Now, recalling that $f'_{k-1}(F)=h_{k-1}(F)/\tau(F)$ and
$\Gamma(F)=\tau(F)$, it follows that 
\begin{align*}
 \E{\psi'(F) Lf_{k-1}(F)}=-\E{\psi''(F)h_{k-1}(F)}=-\E{\psi''(F)Lf_{k-2}(F)},
\end{align*}
where the last equality results from the identity
$h_{k-1}(F)=Lf_{k-2}(F)$. Iterating this procedure $k$ times, we get
\begin{align*}
     \E{\psi'(F) Lf_{k-1}(F)}&=(-1)^{k-1}\E{\psi^{(k)}(F)Lf_0(F)
                               }=(-1)^k\E{\Gamma\brac{\psi^{(k)}(F),
                               f_0(F) }}.
\end{align*}
Applying the diffusion property for $\Gamma$ to both arguments yields
\begin{align*}
     \E{\psi'(F) Lf_{k-1}(F)}=(-1)^k\E{\psi^{(k+1)}(F)f_0'(F)\Gamma(F)}=(-1)^k\E{\psi^{(k+1)}(F)}.
\end{align*}
Exploiting the fact that $F$ admits a density $p_F$ under our induction hypothesis finally allows us to write
\begin{align}
    \label{equation_densityderivativesecondstep}
     \E{\psi'(F) Lf_{k-1}(F)}=(-1)^k\E{\psi^{(k+1)}(F)f_0'(F)\Gamma(F)}=(-1)^k\int_\R \psi^{(k+1)}(x) p_F(x)dx.
\end{align}
Combining \eqref{equation_densityderivativefirststep} and
\eqref{equation_densityderivativesecondstep} yields
\begin{align}
\label{equation_combinedstep}
    \E{\psi^{(k+1)}(F)}=\int_\R \psi^{(k+1)}(x) (-g(x))dx=\int_\R \psi^{(k+1)}(x) p_F(x)dx,
\end{align}
where we recall that $g(x)$ is a function satisfying $g^{(k)}(x)=\E{\mathds{1}_{\{x\leq F\}}Lf_k(F)}$. Now, let $a,b\in\R$.  By an approximation argument, \eqref{equation_combinedstep} holds for the function $\psi(x)=\int_{-\infty}^x\int_{-\infty}^{z_1}\ldots \int_{-\infty}^{z_k} \mathds{1}_{[a,b]}(z_{k+1})dz_{k+1}\ldots dz_2dz_1 $.  Then $\psi^{(k+1)}(x)=\mathds{1}_{[a,b]}(x)$  and
\begin{align*}
    \mathbb{P}(F\in [a,b])=\int_a^b -g(x)dx.
\end{align*}
This implies that $p_F(x)=-g(x)$ and hence 
\begin{align*}
    p^{(k)}_F(x)=-\E{\mathds{1}_{\{x\leq F\}}Lf_k(F)}.  
\end{align*}
\end{proof}

\begin{remark}
One can compare this result to a result in a recent work \cite[Proposition 2.1]{herry2023regularity}. Therein they obtain a general integration-by-part formula that also implies a density representation formula for elements of Markov diffusion chaos.
\end{remark}
Based on the previous result, we deduce a representation formula for the density of a Gamma random variable and the derivatives of said density function. 
\begin{corollary}
\label{corollary_densitygamma}
The density function of the Gamma random variable $\mathcal{G}_\alpha$
introduced in \eqref{formula_gammadensity_original} has the representation
\begin{align}
     p_{\mathcal{G}_\alpha}(x) =\begin{cases}
         \E{\mathds{1}_{\{\mathcal{G}_{\alpha}>x\}}\brac{1+\frac{1-\alpha}{\mathcal{G}_\alpha}}} &x\in (0,\infty);\\
        \E{\mathds{1}_{\{\mathcal{G}_{\alpha}< x\}}\brac{1+\frac{1-\alpha}{\mathcal{G}_\alpha}}}&x\in (-\infty,0] .
     \end{cases}\label{e.3.7}
\end{align}
In addition, for any integer $k\geq 1$, the
$k$-th derivative of the density function of $\mathcal{G}_\alpha$
admits the representation
\begin{align}
   p^{(k)}_{\mathcal{G}_\alpha}(x)=   \begin{cases}
         \E{\mathds{1}_{\{\mathcal{G}_\alpha>x\}}\nu_{k+1}(\mathcal{G}_\alpha) } &x\in (0,\infty);\\
        \E{\mathds{1}_{\{\mathcal{G}_\alpha< x\}}\nu_{k+1}(\mathcal{G}_\alpha) }&x\in (-\infty,0)\,,
             \end{cases}\label{e.3.8} 
\end{align}
where the function $\nu_{k+1}$ is given by
\begin{align}
    \label{def_nuk}
    \nu_{k }(y):=\begin{cases}
        (-1)^{k-1} \sum_{i=0}^{k }{k \choose i}\prod_{j=1}^i(j-\alpha)\frac{1}{y^i}& y\neq 0;\\
        0 & y=0, 
    \end{cases}
\end{align}
under the convention that $\prod_{k=1}^0 \alpha_k=1$
for any sequence $\alpha_k$. Furthermore, when $x=0$ and $\alpha>k+1$, we have
\begin{align*}
    p^{(k)}_{\mathcal{G}_\alpha}(0)=\E{\mathds{1}_{\{\mathcal{G}_\alpha<0\}}\nu_{k+1}(\mathcal{G}_\alpha) }=0. 
\end{align*}
\end{corollary}
\begin{proof}
%
%
Assume the full Markov triple $\brac{E,\nu,\Gamma}$ is a Laguerre
    structure per Example \ref{example_laguerre} and $\mathcal{G}_\alpha\sim \nu$. Recall that in this structure, we have 
    \begin{align}
    \label{laguerreoperator}
        Lf(\mathcal{G}_\alpha)=(\alpha-\mathcal{G}_\alpha)f'(\mathcal{G}_\alpha)+\mathcal{G}_\alpha f''(\mathcal{G}_\alpha); \qquad\qquad \Gamma(f(\mathcal{G}_\alpha))=\mathcal{G}_\alpha(f'(\mathcal{G}_\alpha))^2. 
    \end{align}

Next, we will apply Proposition
\ref{prop_densityrepgeneral} with $F=\mathcal{G}_\alpha$. Per \eqref{laguerreoperator} with $f$ being the identity map $x\mapsto x$, we have $ \Gamma(F)=\Gamma(\mathcal{G}_\alpha)=\mathcal{G}_\alpha$.  It is clearly that we can take any $p\in (0,\alpha)$ then $\E{(1/\Gamma(F))^p}=\E{(1/\mathcal{G}_\alpha)^p}<\infty$ and also $\E{\Gamma(F)^{p/(p-1)} }=\E{\mathcal{G}_\alpha^{p/(p-1)}}<\infty$.  Furthermore, $\tau(x)=\E{\Gamma(F)|F=x}=x$ so that
\begin{align*}
    f_0(x)=\int_1^x\frac{1}{\tau(y)}dy= \ln(x);  \qquad\qquad Lf_0(\mathcal{G}_\alpha)=-1-\frac{1-\alpha}{\mathcal{G}_\alpha}.
\end{align*}
Then Proposition \ref{prop_densitygeneralformderivatives} says 
\begin{align*}
     p_{\mathcal{G}_\alpha}(x)=\E{\mathds{1}_{\{\mathcal{G}_{\alpha}>x\}}\brac{1+\frac{1-\alpha}{\mathcal{G}_\alpha}}}, \qquad x>0. 
\end{align*}
In the case $x\leq 0$, we know $\mathcal{G}_\alpha>0$ almost surely so that 
\begin{align*}
    p_{\mathcal{G}_\alpha}(x)=\E{\mathds{1}_{\{\mathcal{G}_{\alpha}<x\}}\brac{1+\frac{1-\alpha}{\mathcal{G}_\alpha}}}=0, \qquad x\leq 0. 
\end{align*}
The combination of the previous two cases yield 
\begin{align*}
     p_{\mathcal{G}_\alpha}(x) =\begin{cases}
         \E{\mathds{1}_{\{\mathcal{G}_{\alpha}>x\}}\brac{1+\frac{1-\alpha}{\mathcal{G}_\alpha}}} &x\in (0,\infty);\\
        \E{\mathds{1}_{\{\mathcal{G}_{\alpha}< x\}}\brac{1+\frac{1-\alpha}{\mathcal{G}_\alpha}}}&x\in (-\infty,0] .
     \end{cases}
\end{align*}
The formula of the $k$-th derivative of the density function when $k\geq 1$ can be
deduced similarly via Proposition
\ref{prop_densitygeneralformderivatives}. We remind the reader that it was pointed out at \eqref{formula_gammadensityderivative_original} that $p^{(k)}_{\mathcal{G}_\alpha}(0)$ is well-defined only when $\alpha>k+1$. 

\end{proof}
\begin{remark}
\label{remark_recheckingdensityformula}
One can directly verify the formulas in Corollary \ref{corollary_densitygamma} by a simple integration by parts, that is 
\begin{align*}
    \int y^{\alpha-1}e^{-y}dy=-y^{\alpha-1}e^{-y}+(\alpha-1)\int y^{\alpha-2}e^{-y}dy,
\end{align*}
without referring to Proposition \ref{prop_densitygeneralformderivatives}. Indeed, 
\begin{align*}
    &\E{\mathds{1}_{\{\mathcal{G}_{\alpha}>x\}}\brac{1+\frac{1-\alpha}{\mathcal{G}_\alpha}}}
    =\frac{1}{\Gamma(\alpha)}\int_{x}^\infty \brac{1+\frac{1-\alpha}{y}}y^{\alpha-1}e^{-y}dy=\frac{1}{\Gamma(\alpha)}x^{\alpha-1}e^{-x}. 
\end{align*}
\end{remark}

\begin{remark}
  \label{remark_othertargets}
    In this remark, we demonstrate how Proposition \ref{prop_densityrepgeneral} can lead to density representations for general targets that are invariant probability measure of Markov processes. 
    
    Let $\{X_t:t\geq 0\}$ be a reversible It\^{o} diffusion process in $\R$ with the generator 
    \begin{align*}
        Lf(x)=\frac{1}{2}\sigma(x)^2f''(x)+b(x)f'(x). 
    \end{align*}
    Assume $\nu$ is its unique invariant probability measure that admits a density strictly positive on an interval $(\ell,u)$ with $-\infty\leq \ell<u\leq \infty$, and $0$ outside of $(\ell,u)$. Let $F$ be a random variable distributed as $\nu$. The carr\'{e} du champ operator associated with the process $\{X_t:t\geq 0\}$ acts on smooth random variables as $\Gamma(f(F))=\frac{1}{2}\sigma^2 (f'(F))^2$. Then using the notations from Proposition \ref{prop_densityrepgeneral}, the funtions $\tau$ and $f_0$ are respectively $\tau(x)=\frac{1}{2}\sigma(x)^2$ and $f_0(x)=\int_a^x \frac{2}{\sigma(y)^2}dy$ for some suitable $a$. It is straightforward to compute that  
    \begin{align*}
        Lf_0(F)= -\frac{2\sigma'(F)}{\sigma(F)}+\frac{2b(F)}{\sigma(F)^2}. 
    \end{align*}
    Then per Proposition \ref{prop_densityrepgeneral}, the density of the invariant probability measure $\nu$ has the representation 
    \begin{align*}
        p(x)=-2\E{1_{\{x\leq F \} } \brac{-\frac{\sigma'(F)}{\sigma(F)} +\frac{b(F)}{\sigma^2(F)}} }, \qquad x\in (\ell,u) \text{ and } F\sim \nu. 
    \end{align*}
    We emphasize that the above argument is not rigorous, and care has to be taken as in the proof of Corollary \ref{corollary_densitygamma}. We only give a short, informal argument here; and a rigorous version will be provided in a follow-up work. 
\end{remark}


\subsection{Density representation for multiple Wiener integrals}
~\

For a random variable $F\in \mathbb{D}^{1,2}$, let us write 
\begin{align*}
     w:=\norm{DF}^2_\mathfrak{H},\quad 
     u:=\frac{DF}{w}. 
\end{align*}
If $w>0$ almost surely (which is true for any multiple Wiener integral of an order greater than $0$), it is well-known (see \cite[Theorem 2.1.3]{nualart2006malliavin}) that $F$ admits a density. Moreover per \cite[Theorem 2.1.4]{nualart2006malliavin}, if we assume further that 
\begin{align*}
1/w\in \bigcap_{p\geq 1}L^p,
\end{align*}
then the density of $F$ is smooth and the $k$-th derivative of its density function has the representation
\begin{align}
\label{formula_densitymalcal}
    p^{(k)}_F(x)=(-1)^k\E{\mathds{1}_{\{F>x\}} G_{k+1}}.
\end{align}
The above terms $G_k$ are defined recursively by $G_0=1$ and $G_{k+1}=\delta\brac{G_ku}$. 

In the upcoming result, we will show how to decompose the above density formula in a way that is useful for getting Gamma convergence. Let us start by introducing some additional notations. Throughout the rest of the paper, unless specified otherwise, we assume $q\geq 2$ is a fixed even integer. For a pair of positive integers $l,m$ such that $1\leq m\leq l\leq q/2+1$ and $l+m\geq 3$, we define the constant
\begin{align}
\label{def_lambdakl}
     {\lambda(l,m)}:=\frac{(q-1)!\brac{q/2-l+1}!\brac{q/2-m+1}!}{(q-l-m+2)!(q/2)!^2}
\end{align}
and
\begin{align}
\label{def_Lambda}
     {\Lambda(l,m)}:=\lambda(l,m)D^lF\otimes_1D^mF-D^{l+m-2}F
\end{align}
which is a random variable taking value in $\mathfrak{H}^{\otimes l+m-2}$. Furthermore, we will write
\begin{align}
\label{def_Theta}
     \Theta:=\norm{DF}^2_\mathfrak{H}- q\brac{F+\alpha}=w-q\brac{F+\alpha} . 
\end{align}
The following lemma provides a representation formula for the density
(and its derivatives) of a multiple Wiener integral of an even order.
\begin{lemma}
\label{lemma_decompositiondensityforgammaconvergence}
Let $q$ be an even positive integer and $F$ be a random variable in
the $q$-th Wiener chaos. Assume further that $1/\norm{DF}^2_\frak{H}\in \cap_{p\geq 1}L^p$. Let $\alpha \in \R$ and $k\geq 0$ be a integer. Then, the $k$-th order derivative of the density function of $F+\alpha$ admits the representation 
\begin{align*}
    p^{(k)}_{F+\alpha}(x)=\E{\mathds{1}_{\{F+\alpha>x\}}\brac{\nu_{k+1}\brac{F+\alpha} +T_{k+1}}},
\end{align*}
where the function $\nu_{k+1}$ is defined at \eqref{def_nuk} while the term $T_{k+1}$ has the form
\begin{align}
    T_{k+1}&=a\Theta+\sum_{\substack{(l,m,n_1,\ldots,n_p,q_1,\ldots,q_r)\in I}}b_{l,m,n_1,\dots,n_p,q_1,\ldots,q_r}\nonumber\\
    &\qquad\qquad\qquad\qquad \inner{\Lambda(l,m)\otimes D^{n_1}F\otimes \ldots \otimes D^{n_p}F,D^{q_1}F\otimes\ldots\otimes D^{q_r}F}_{\mathfrak{H}^{l+m-2+\sum_{i=1}^pn_i}}\label{e.3.10} 
\end{align}
where $\Lambda(l,m)$ and $\Theta$ are respectively defined at
\eqref{def_Lambda} and \eqref{def_Theta}. The index set $I$ is given by
\begin{align*}
    I&=\bigg\{ \brac{l,m,n_1,\ldots,n_p,q_1,\ldots,q_r} \colon 1\leq l,m\leq q/2+1;\ 3\leq l+m \leq k+3;\\
    &\quad\quad\quad 0\leq n_i\leq (k+3)\wedge q\ \mbox{for}\ 1\leq i\leq p;\ 0\leq
      q_i\leq (k+3)\wedge q\ \mbox{for}\ 1\leq i\leq r;\\
  & \qquad\qquad\qquad\qquad\qquad\qquad\qquad\qquad\qquad\qquad\qquad\quad l+m-2+\sum_{i=1}^pn_i=\sum_{i=1}^r q_i\bigg\}.
\end{align*}
Furthermore, $a$ and $b_{l,m,n_1,\ldots,n_p,q_1,\ldots,q_r}$ are real
polynomials in the variables 
\begin{align}
\label{list_variablesinaandb}
     F,\ \frac{1}{F+\alpha},\ \frac{1}{w},\ \inner{D^{\tilde{l}}F\otimes_{1}D^{\tilde{m}}F\otimes D^{\tilde{n}_1}F\otimes \ldots \otimes  D^{\tilde{n}_p}F, D^{\tilde{q}_1}F\otimes\ldots\otimes D^{\tilde{q}_r}F}_{\mathfrak{H}^{\tilde{l}+\tilde{m}-2+\sum_{i=1}^p \tilde{n}_i}}
\end{align}
such that the following set of conditions, which we shall call $\operatorname{\textbf{Con}}-(k+1)$, is satisfied. 
  \begin{enumerate}
\item[(i)] $1\leq \tilde{l},\tilde{m}\leq q/2+1$ and $3\leq \tilde{l}+\tilde{m}\leq k+3$;
\item[(ii)] $0\leq \tilde{n}_i\leq (k+3)\wedge q$ for $1\leq i\leq p$ and $0\leq \tilde{q}_i\leq (k+3)\wedge q$ for $1\leq i\leq r$;
\item[(iii)] $\tilde{l}+\tilde{m}-2+\sum_{i=1}^p \tilde{n}_i=\sum_{i=1}^{r} \tilde{q}_i$.
\end{enumerate}
Moreover, when factors $1/w^i$ or $1/(F+\alpha)^j$ appear in the polynomials $a$ or $b_{l,m,n_1,\ldots,n_p,q_1,\ldots,q_r}$, then they satisfy
\begin{enumerate}
    \item[(iv)] $i\leq 2k+2$ and $j\leq k+1$. 
\end{enumerate}
\end{lemma}
\begin{proof}
Let us start with the case $k=0$. First we set $\nu_1(y)=1+\frac{1-\alpha}{y}$. Then using formula \eqref{formula_densitymalcal}, $p_{F+\alpha}(x)$ can be written as 
\begin{align*}
     p_{F+\alpha}(x)=\E{\mathds{1}_{\{F+\alpha>x\}}\delta(u)}=\E{\mathds{1}_{\{F+\alpha>x\}}\brac{\nu_{1}\brac{F+\alpha} +T_{1}}},
\end{align*}
where $T_1$ is  defined  by \eqref{e.3.10} with $k=0$, namely, 
\begin{align}
\label{equation_T1}
    T_1&=\delta(u)-\brac{1+\frac{1-\alpha}{F+\alpha}}\nonumber\\
&=\frac{qF}{w}+\frac{1}{w^2}\inner{2D^2F\otimes_1DF,DF}_\mathfrak{H} -\brac{1+\frac{1-\alpha}{F+\alpha}}\nonumber\\
&=\brac{\frac{q(F+\alpha)}{w}-1}+(1-\alpha)\brac{\frac{q}{w}-\frac{1}{F+\alpha}}+\frac{1}{w^2}\inner{2D^2F\otimes_1DF-qDF,DF}_\mathfrak{H}\nonumber\\
    &=\brac{-\frac{1}{w}-\frac{(1-\alpha)}{w(F+\alpha)} }\Theta +\frac{q}{w^2}\inner{\Lambda(2,1),DF}_\mathfrak{H}. 
\end{align}
To get the second line, we use \cite[Proposition 1.3.3]{nualart2006malliavin} and to get the last line,   we use  $\Lambda(2,1)=(2/q)D^2F\otimes_1 DF-DF$. Thus, the
statement of the lemma holds true  for $k=0$. We now assume that the statement of the lemma holds true for the $k$-th derivative of $p_{F+\alpha}$. Then, by the integration by part formula \eqref{intbypart_malliavin}, the $(k+1)$-th derivative satisfies
\begin{align}
\label{formula_k+1derivativedecompose_initialstep}
   p^{(k+1)}_{F+\alpha}(x)
    =&\frac{d}{dx}p^{(k)}_{F+\alpha}(x) = \frac{d}{dx}\E{\mathds{1}_{\{F+\alpha>x\}}\brac{\nu_{k+1}\brac{F+\alpha} +T_{k+1}}}\nonumber \\
    =& \E{\langle D\mathds{1}_{\{F+\alpha>x\}}, u\rangle \brac{\nu_{k+1}\brac{F+\alpha} +T_{k+1}}}\nonumber \\
    =&-\E{\mathds{1}_{\{F+\alpha>x\}}\brac{\delta\brac{\nu_{k+1}(F+\alpha)u}+\delta\brac{T_{k+1}u} }}.
\end{align}
We split our calculation into two parts. In the first part, let us
expand the term $\delta\brac{\nu_{k+1}(F+\alpha)u}$. Using
\cite[Proposition 1.3.3]{nualart2006malliavin}, we can write
\begin{align*}
  \delta\brac{\nu_{k+1}(F+\alpha)u}&=\frac{\nu_{k+1}(F+\alpha)qF}{w}+\frac{\nu_{k+1}(F+\alpha)}{w^2}\inner{2D^2F\otimes_1DF,DF}_\mathfrak{H}\\
  &\qquad -\frac{1}{w}\inner{D\nu_{k+1}(F+\alpha), DF}_\mathfrak{H}\\
    &=\nu_{k+1}(F+\alpha)\brac{1+\frac{1-\alpha}{F+\alpha} }-\brac{\frac{F\nu_{k+1}(F+\alpha)}{F+\alpha}-\frac{qF\nu_{k+1}(F+\alpha)}{w} }\\
    & \qquad -\brac{\frac{\nu_{k+1}(F+\alpha)}{F+\alpha}-\frac{q\nu_{k+1}(F+\alpha)}{w}}-\nu'_{k+1}(F+\alpha)\\
    &\qquad +\brac{\frac{\nu_{k+1}(F+\alpha)}{w^2}\inner{2D^2F\otimes_1DF,DF}_\mathfrak{H} -\frac{q\nu_{k+1}(F+\alpha)}{w} }\\
    &=\nu_{k+1}(F+\alpha)\brac{1+\frac{1-\alpha}{F+\alpha} }-\frac{F\nu_{k+1}(F+\alpha)}{(F+\alpha)w}(w-q(F+\alpha)) \\
    &-\frac{\nu_{k+1}(F+\alpha)}{w(F+\alpha)}(w-q(F+\alpha))-\nu'_{k+1}(F+\alpha)\\
    &+\brac{\frac{\nu_{k+1}(F+\alpha)}{w^2}\inner{2D^2F\otimes_1DF,DF}_\mathfrak{H}-\frac{\nu_{k+1}(F+\alpha)}{w^2}q\inner{DF,DF}_\frak{H}}. 
\end{align*}
Then by using the definition of $\Theta=w-q\brac{F+\alpha}$ and $\Lambda(2,1)=(2/q)D^2F\otimes_1 DF-DF$, we obtain
    \begin{align*}
  \delta\brac{\nu_{k+1}(F+\alpha)u}  &=\nu_{k+1}(F+\alpha)\brac{1+\frac{1-\alpha}{F+\alpha} }-\nu'_{k+1}(F+\alpha)\\
    & -\brac{\frac{F\nu_{k+1}(F+\alpha)}{(F+\alpha)w}+\frac{\nu_{k+1}(F+\alpha)}{(F+\alpha)w}}\Theta+\frac{q\nu_{k+1}(F+\alpha)}{w^2}\inner{\Lambda(2,1),DF}_\mathfrak{H}. 
\end{align*}
It is worth noticing here that
\begin{align*}
    \nu'_{k+1}(y)=(-1)^{k+1}\sum_{i=1}^{k+2}{k+1\choose i-1}\prod_{j=1}^{i-1}(j-\alpha)\frac{i-1}{y^i}. 
\end{align*}
Therefore,
\begin{align*}
  \delta\brac{\nu_{k+1}(F+\alpha)u}&=f(F+\alpha)-\brac{\frac{F\nu_{k+1}(F+\alpha)}{(F+\alpha)w}+\frac{\nu_{k+1}(F+\alpha)}{(F+\alpha)w}}\Theta\\
  &\qquad\qquad\qquad+\frac{q\nu_{k+1}(F+\alpha)}{w^2}\inner{\Lambda(2,1),DF}_{\mathfrak{H}},
\end{align*}
where 
\begin{align*}
    f(y)=\nu_{k+1}(y)+\frac{1-\alpha}{y}\nu_{k+1}(y)-\nu'_{k+1}(y).
\end{align*}
We will now check that $\nu_{k+2}(y)=-f(y)$. Recall the convention $\prod_{j=1}^0(j-\alpha)=1$. We have
\begin{align*}
    f(y)&= (-1)^k\sum_{i=0}^{k+1}{k+1\choose i}\prod_{j=1}^i(j-\alpha)\frac{1}{y^i}+(-1)^k\sum_{i=1}^{k+2}{k+1\choose i-1}\prod_{j=1}^{i-1}(j-\alpha)\frac{1-\alpha}{y^i}\\
    &\qquad -(-1)^{k+1}\sum_{i=1}^{k+2}{k+1\choose i-1}\prod_{j=1}^{i-1}(j-\alpha)\frac{i-1}{y^i}\\
    &=(-1)^k+ (-1)^k\sum_{i=1}^{k+1}{k+1\choose i}\prod_{j=1}^i(j-\alpha)\frac{1}{y^i}+(-1)^k{\prod_{j=1}^{k+1}(j-\alpha)\frac{1-\alpha+(k+2)-1}{y^{k+2}}} \\
    &\qquad + (-1)^{k}\sum_{i=1}^{k+1}{k+1\choose i-1}\prod_{j=1}^{i-1}(j-\alpha)\frac{1-\alpha+i-1}{y^i}\\
        &=(-1)^k+ (-1)^k\sum_{i=1}^{k+1}{k+1\choose i}\prod_{j=1}^i(j-\alpha)\frac{1}{y^i}+(-1)^k{\prod_{j=1}^{k+2}(j-\alpha)\frac{1}{y^{k+2}}} \\
    &\qquad + (-1)^{k}\sum_{i=1}^{k+1}{k+1\choose i-1}\prod_{j=1}^{i}(j-\alpha)\frac{1}{y^i}.
\end{align*}
Now by combining the second and fourth terms on the right hand side and using the identity ${k+2\choose i}={k+1\choose i}+{k+1\choose i-1}$, we arrive at 
\begin{align*}
    f(y)&=(-1)^k\brac{1+\prod_{j=1}^{k+2}(j-\alpha)\frac{1}{y^{k+2}} }+(-1)^k\sum_{i=1}^{k+1}{k+2\choose i}\prod_{j=1}^i(j-\alpha)\frac{1}{y^i}\\
    &=-\nu_{k+2}(y).
\end{align*}

This leads to
\begin{align}
\label{proof_decompositionfirststep}
  \delta\brac{\nu_{k+1}(F+\alpha)u}&=-\nu_{k+2}(F+\alpha)-\brac{\frac{F\nu_{k+1}(F+\alpha)}{(F+\alpha)w}+\frac{\nu_{k+1}(F+\alpha)}{(F+\alpha)w}}\Theta\nonumber\\
  &\qquad  +\frac{q\nu_{k+1}(F+\alpha)}{w^2}\inner{\Lambda(2,1),DF}_\mathfrak{H}. 
\end{align}
Consider the previous expansion in the context of equation
\eqref{formula_k+1derivativedecompose_initialstep}. The second part of
the proof will be dedicated to showing that 
\begin{align}
\label{def_mathcalA}
    \mathcal{A}&=-\brac{\frac{F\nu_{k+1}(F+\alpha)}{(F+\alpha)w}+\frac{\nu_{k+1}(F+\alpha)}{(F+\alpha)w}}\Theta+\frac{q\nu_{k+1}(F+\alpha)}{w^2}\inner{\Lambda(2,1),DF}_\mathfrak{H}+\delta\brac{T_{k+1}u}\nonumber\\
&=\mathcal{A}_1+\mathcal{A}_2+\delta\brac{T_{k+1}u}
\end{align}
is the quantity $T_{k+2}$ as stated in the statement of the
lemma. Verifying that $\mathcal{A}_1$ and $\mathcal{A}_2$ appearing in
the above equality are real polynomials that satisfy
$\operatorname{\textbf{Con}}-(k+2)$ is straightforward. Let us focus
on the term $\delta\brac{T_{k+1}u}$,  where $T_{k+1}$ is given by \eqref{e.3.10}. We have
\begin{align}
\label{equation_deltaTu}
    \delta\brac{T_{k+1}u}&=\delta\brac{a\Theta u}\nonumber\\
    &\quad +\sum_{ I}\delta\brac{b_{l,m,n_1,\ldots,n_p,q_1,\ldots,q_r}\inner{\Lambda(l,m)\otimes \bigotimes_{i=1}^p D^{n_i}F,\bigotimes_{j=1}^r D^{q_j}F}_{\mathfrak{H}^{\otimes \sum_{i=1}^r q_i}}u}.
\end{align}
Regarding the first term on the right hand side, one can apply \cite[Proposition 1.3.3]{nualart2006malliavin} and Lemma \ref{lemma_derivativeofLambda} stated at the end of this section to get
\begin{align}
\label{equation_delta_athetau}
    \delta\brac{a\Theta u}&=\Theta\delta(au)-\inner{D\Theta,au}_\mathfrak{H}\nonumber\\
    &=\brac{\frac{aqF}{w}+\frac{1}{w^2}\inner{2D^2F,DF^{\otimes 2}}_{\mathfrak{H}^{\otimes 2}}-\inner{Da,u}_\mathfrak{H}}\Theta-\frac{qa}{w}\inner{\Lambda(2,1), DF}_\mathfrak{H}. 
\end{align}
We will now show that $\inner{Da,u}_\mathfrak{H}$ on the right hand side is a real polynomial satisfying $\operatorname{\textbf{Con}}-(k+2)$. The induction hypothesis says that any term in the polynomial $a$ has the form 
\begin{align*}
 \mathcal{T}= C\frac{1}{w^i}\frac{1}{(F+\alpha)^j}F^v\prod_{s\in I}\inner{D^{{l}}F\otimes_{1}D^{{m}}F \otimes\bigotimes_{i=1}^p D^{n_i}F,\bigotimes_{j=1}^r D^{q_j}F}_{\mathfrak{H}^{\otimes \sum_{i=1}^r q_i}}^{V_s}.
\end{align*}
Here, $i,j,v$ and $\{V_s:s\in I\}$ are non-negative integers for which
$i\leq 2k+2$ and $j\leq k+1$ and $C$ is some real constant. Hence, the
verification that $\inner{Da,u}_\mathfrak{H}$ in
\eqref{equation_delta_athetau} satisfies
$\operatorname{\textbf{Con}}-(k+2)$ reduces to verifying that the term
$\inner{D\mathcal{T},u}_\mathfrak{H}$ satisfies
$\operatorname{\textbf{Con}}-(k+2)$. By Leibniz's product rule, we have
\begin{align*}
    &\inner{D\mathcal{T},u}_\mathfrak{H}\\
    &=C\inner{D\frac{1}{w^i},u}_\mathfrak{H}\frac{1}{(F+\alpha)^j}F^v\prod_{s\in I}\inner{D^{{l}}F\otimes_{1}D^{{m}}F \otimes \bigotimes_{i=1}^p D^{n_i}F,\bigotimes_{j=1}^r D^{q_j}F}_{\mathfrak{H}^{\otimes \sum_{i=1}^r q_i}}^{V_s}\\
    &\quad +C\frac{1}{w^i}\inner{D\frac{1}{(F+\alpha)^j},u}_\mathfrak{H}F^v\prod_{s\in I}\inner{D^{{l}}F\otimes_{1}D^{{m}}F \otimes\bigotimes_{i=1}^p D^{n_i}F,\bigotimes_{j=1}^r D^{q_j}F}_{\mathfrak{H}^{\otimes \sum_{i=1}^r q_i}}^{V_s}\\
    &\quad +C\frac{1}{w^i}\frac{1}{(F+\alpha)^j}\inner{DF^v,u}_\mathfrak{H}\prod_{s\in I}\inner{D^{{l}}F\otimes_{1}D^{{m}}F\otimes \bigotimes_{i=1}^p D^{n_i}F,\bigotimes_{j=1}^r D^{q_j}F}_{\mathfrak{H}^{\otimes \sum_{i=1}^r q_i}}^{V_s}\\
    &\quad +C\frac{1}{w^i}\frac{1}{(F+\alpha)^j}F^v\inner{D\prod_{s\in I}\inner{D^{{l}}F\otimes_{1}D^{{m}}F \otimes \bigotimes_{i=1}^p D^{n_i}F,\bigotimes_{j=1}^r D^{q_j}F}_{\mathfrak{H}^{\otimes \sum_{i=1}^r q_i}}^{V_s},u}_\mathfrak{H}. 
\end{align*}

Observe that for positive integers $i,j,v$ and $V_s$ with $i\leq 2k+2$
and $j\leq k+1$, per the chain rule \eqref{mal_chain_rule} one has
\begin{align}
  \inner{D\frac{1}{w^i},u}&=-\frac{2i}{w^{i+2}}\inner{D^2F\otimes_1DF,DF}_\mathfrak{H}\label{deg_i}
\end{align}
as well as
\begin{align}
    \inner{D\frac{1}{(F+\alpha)^j},u}&=-\frac{j}{(F+\alpha)^{j+1}}; \label{deg_j}
\end{align}
and also
\begin{align*}
\inner{DF^v,u}&=vF^{v-1}.
\end{align*}
Finally, we also have
\begin{align*}
    &\inner{D\inner{D^{{l}}F\otimes_{1}D^{{m}}F \otimes\bigotimes_{i=1}^p D^{n_i}F,\bigotimes_{j=1}^r D^{q_j}F}^{V_{s}}_{\mathfrak{H}^{\otimes \sum_{i=1}^r q_i}}, u}_\mathfrak{H}\\
     &\qquad\qquad =V_s \inner{D^{{l}}F\otimes_{1}D^{{m}}F\otimes \bigotimes_{i=1}^p D^{n_i}F,\bigotimes_{j=1}^r D^{q_j}F}_{\mathfrak{H}^{\otimes \sum_{i=1}^r q_i}}^{V_s-1}\\
     &\qquad\qquad\qquad\times \frac{1}{w}\Bigg(\Bigg\langle D^{{l}+1}F\otimes_{1}D^{{m}}F\otimes\bigotimes_{i=1}^p D^{{n}_i}F 
     +D^{{l}}F\otimes_{1}D^{{m}+1}F\otimes\bigotimes_{i=1}^p D^{{n}_i}F \\
     &\qquad\qquad\quad +D^{{l}}F\otimes_{1}D^{{m}}F\otimes\brac{\sum_{i=1}^p D^{{n}_i+1}F\otimes\bigotimes_{\substack{i'=1\\i'\neq i}}^p D^{{n}_{i'}}F } ,DF \bigotimes_{j=1}^rD^{{q}_j}F\Bigg\rangle_{\mathfrak{H}^{\otimes 1+ \sum_{i=1}^r q_i}}\\
     &\qquad\qquad\quad +\Bigg\langle DF\otimes D^{{l}}F\otimes_{1}D^{{m}}F\otimes\bigotimes_{i=1}^p D^{{n}_i}F, {\sum_{j=1}^r D^{{q}_j+1}F\otimes\bigotimes_{\substack{j'=1\\j'\neq j}}^r D^{{q}_{j'}}F }\Bigg\rangle_{\mathfrak{H}^{\otimes 1+\sum_{i=1}^r q_i}}\Bigg).
\end{align*}
Thanks to these identities, one can see
$\inner{D\mathcal{T},u}_\mathfrak{H}$ is a real polynomial satisfying
parts $(i)$ through $(iii)$ of $\operatorname{\textbf{Con}}-(k+2)$. To check part $(iv)$, let us suppose $\mathcal{T}$ contains the factors $1/(F+\alpha)^{k+1}$ and $1/w^{2k+2}$, then the calculations at \eqref{deg_i} and \eqref{deg_j} indicate that $\inner{D\mathcal{T},u}_\mathfrak{H}$ contains the factors $1/(F+\alpha)^{k+2}$ and $1/w^{2k+4}$.
    
Next, the quantity
$\delta\brac{b_{l,m,n_1,\ldots,n_p,q_1,\ldots,q_r}\Lambda(l,m)u}$ on
the right hand side of \eqref{equation_deltaTu} can be analysed in the
same way as $\delta\brac{a\Theta u}$ at
\eqref{equation_delta_athetau}.

Combining the previous expansion of $\delta\brac{T_{k+1}u}$ together
with \eqref{formula_k+1derivativedecompose_initialstep} and
\eqref{proof_decompositionfirststep} yields
\begin{align*}
        p^{(k+1)}_{F+\alpha}(x)=\E{\mathds{1}_{\{F+\alpha>x\}}\brac{\nu_{k+2}\brac{F+\alpha} +T_{k+2}}},
\end{align*}
where $T_{k+2}=\mathcal{A}$ defined at \eqref{def_mathcalA} is a real
polynomial satisfying $\operatorname{\textbf{Con}}-(k+2)$, which
concludes the proof.
\end{proof}
The following lemma provides a technical link between the quantities
$\Lambda$ and $\Theta$ defined at \eqref{def_Lambda} and
\eqref{def_Theta}, respectively.
\begin{lemma}
\label{lemma_derivativeofLambda}
Recall the quantities $\Lambda(k,l)$ and $\Theta$ defined respectively at \eqref{def_Lambda} and \eqref{def_Theta}. It holds that
    \begin{align*}
        D\Theta&=q\Lambda(2,1)
        \end{align*}
and for $k,l\in\N$ such that $k+l\geq 3$,
\begin{align*}
    D\Lambda(k,l)=\frac{\lambda(k,l)}{\lambda(k+1,l)}\Lambda(k+1,l)+\frac{\lambda(k,l)}{\lambda(k,l+1)}\Lambda(k,l+1).     
\end{align*}
\end{lemma}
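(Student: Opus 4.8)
The plan is to prove both identities by direct computation, leaning on the Leibniz rule for the Malliavin derivative together with a single scalar identity satisfied by the constants $\lambda(l,m)$ defined at \eqref{def_lambdakl}.

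For the first identity I would simply differentiate $\Theta=\norm{DF}^2_{\mathfrak{H}}-q\brac{F+\alpha}$ term by term. Since $\alpha$ is constant, $D\brac{q(F+\alpha)}=qDF$, while differentiating the squared norm via the Leibniz rule for the inner product gives $D\norm{DF}^2_{\mathfrak{H}}=D\inner{DF,DF}_{\mathfrak{H}}=2\,D^2F\otimes_1 DF$. Hence $D\Theta=2\,D^2F\otimes_1 DF-qDF$. On the other hand, evaluating \eqref{def_lambdakl} at $(l,m)=(2,1)$ yields $\lambda(2,1)=2/q$, so by \eqref{def_Lambda} one has $\Lambda(2,1)=\tfrac{2}{q}D^2F\otimes_1 DF-DF$ and therefore $q\Lambda(2,1)=2\,D^2F\otimes_1 DF-qDF=D\Theta$, as claimed.

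For the second identity I would apply the Leibniz rule to $\Lambda(k,l)=\lambda(k,l)D^kF\otimes_1 D^lF-D^{k+l-2}F$. Differentiating the contraction produces one term for each factor being hit, namely $D\brac{D^kF\otimes_1 D^lF}=D^{k+1}F\otimes_1 D^lF+D^kF\otimes_1 D^{l+1}F$, and $D\brac{D^{k+l-2}F}=D^{k+l-1}F$, so that $D\Lambda(k,l)=\lambda(k,l)\bracsq{D^{k+1}F\otimes_1 D^lF+D^kF\otimes_1 D^{l+1}F}-D^{k+l-1}F$. Expanding the proposed right-hand side through the definitions of $\Lambda(k+1,l)$ and $\Lambda(k,l+1)$, and noting that both carry the same tensor $D^{k+l-1}F$ because $(k+1)+l-2=k+(l+1)-2=k+l-1$, the coefficients of $D^{k+1}F\otimes_1 D^lF$ and of $D^kF\otimes_1 D^{l+1}F$ match automatically; the whole equality then reduces to the scalar statement $\frac{\lambda(k,l)}{\lambda(k+1,l)}+\frac{\lambda(k,l)}{\lambda(k,l+1)}=1$. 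This last identity follows from \eqref{def_lambdakl} by cancelling factorials: a direct computation gives $\frac{\lambda(k,l)}{\lambda(k+1,l)}=\frac{q/2-k+1}{q-k-l+2}$ and, symmetrically, $\frac{\lambda(k,l)}{\lambda(k,l+1)}=\frac{q/2-l+1}{q-k-l+2}$, whose numerators sum to exactly the common denominator $q-k-l+2$.

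I expect the only real subtlety to lie in the Leibniz rule for the contraction. A priori, differentiating the whole tensor $D^kF\otimes_1 D^lF$ appends the fresh derivative slot at the last position, whereas in $\Lambda(k+1,l)$ the extra slot of $D^{k+1}F$ occupies an interior position, so the free indices are arranged differently. This discrepancy is harmless because $F$ lives in a fixed Wiener chaos, whence each $D^jF$ is a symmetric element of $\mathfrak{H}^{\otimes j}$ and the placement of the free slots is immaterial. Once symmetry is invoked, the two differentiated contraction terms are identified with $D^{k+1}F\otimes_1 D^lF$ and $D^kF\otimes_1 D^{l+1}F$ precisely, and everything else is the routine factorial algebra recorded above.
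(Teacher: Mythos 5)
Your proof is correct and follows essentially the same route as the paper's: differentiate $\Theta$ and $\Lambda(k,l)$ via the Leibniz rule (noting $\lambda(2,1)=2/q$ for the first identity) and reduce the second identity to the scalar relation $\frac{1}{\lambda(k+1,l)}+\frac{1}{\lambda(k,l+1)}=\frac{1}{\lambda(k,l)}$, which you verify in the equivalent form $\frac{\lambda(k,l)}{\lambda(k+1,l)}+\frac{\lambda(k,l)}{\lambda(k,l+1)}=1$ by the same factorial cancellation the paper performs. Your added remark on the placement of the fresh derivative slot, resolved by the symmetry of each $D^jF$, addresses a point the paper passes over silently but does not change the substance of the argument.
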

\begin{proof}
Proving the first identity is straightforward. Indeed, 
    \begin{align*}
        D\Theta=D\brac{\norm{DF}^2-q(F+\alpha)}=2D^2F\otimes_1DF-qDF=q\Lambda(2,1).
    \end{align*}
For the second identity, it is sufficient to verify the relation
\begin{align*}
    \frac{1}{\lambda(k+1,l)}+\frac{1}{\lambda(k,l+1)}=\frac{1}{\lambda(k,l)}
\end{align*}
since if it is true, then we can write
\begin{align*}
    D\Lambda(k,l)&=\lambda(k,l)D^{k+1}F\otimes_1 D^lF+\lambda(k,l)D^{k}F\otimes_1 D^{l+1}F-D^{k+l-1}F\\
    &=\frac{\lambda(k,l)}{\lambda(k+1,l)}\brac{ \lambda(k+1,l)D^{k+1}F\otimes_1 D^lF-D^{k+l-1}F}\\
    &\quad +\frac{\lambda(k,l)}{\lambda(k,l+1)}\brac{ \lambda(k,l+1)D^{k}F\otimes_1 D^{l+1}F-D^{k+l-1}F}.
\end{align*}
This relation does indeed hold as we have
\begin{align*}
  \frac{1}{\lambda(k+1,l)}+\frac{1}{\lambda(k,l+1)}&=\frac{(q-k-l+1)!(q/2)!^2}{(q-1)!\brac{q/2-k}!\brac{q/2-l+1}!}\\
  &\quad +\frac{(q-k-l+1)!(q/2)!^2}{(q-1)!\brac{q/2-k+1}!\brac{q/2-l}!}\\
                                                   &=\frac{(q-k-l+1)!(q/2)!^2\brac{q-k-l+2}}{(q-1)!\brac{q/2-k+1}!\brac{q/2-l+1}!}\\
  &=\frac{1}{\lambda(k,l)},
\end{align*}
which concludes the proof.
\end{proof}

\section{Stein's method}~\
\label{sectionsteinmethod}
In this section, we introduce Stein's method for Gamma approximation,
and we derive several important estimates that will play a key role in the proof of Theorems \ref{theorem_fourthmoment_singlechaos_0thderivative}, \ref{theorem_fourthmoment_singlechaos_kthderivative} and \ref{theorem_gammaconvergence_sumofchaos}. Readers can refer to 
\cite{NP09b,NP09main, DP18} for a more detailed exposition of Stein's
for Gamma approximation. 

For a function $h$ belonging to some appropriate class of functions
$\mathscr{H}$, a choice for the Stein equation for Gamma approximation can be 
\begin{align}
\label{equation_gammastein}
    yf'(y)+(\alpha-y)f(y)=h(y)-\E{h(\mathcal{G}_{\alpha})},
\end{align}
where $\mathcal{G}_\alpha,\alpha>0$ is a Gamma distributed random
variable as specified at \eqref{formula_gammadensity_original}. Let us
assume that for every $h\in \mathscr{H}$, a solution $f_h$ to the above ordinary differential equation exists. Then, for any random variable $X$ such that $\E{\abs{h(X)}}<\infty$, we can write
\begin{align*}
    \sup_{h\in\mathscr{H}}\abs{ \E{h(X)}-\E{h(\mathcal{G}_{\alpha})}}\leq   \sup_{h\in\mathscr{H}}\abs{\E{Xf_h'(X)+(\alpha-X)f_h(X)}}.
\end{align*}
The advantage of the above Stein  inequality is that  the right hand side 
depends only on the variable $X$,   involving no longer $\mathcal{G}_{\alpha}$.  
If we take $\mathscr{H}$ to be the collection of all indicate  functions
of Borel sets of $\R$, then the above equation gives an upper bound on
the total variation distance between $X$ and $\mathcal{G}$. If we
instead take $\mathscr{H}$ to be the collection of all 1-Lipschitz
function on $\R$, then the above inequality yields an upper bound on
the Wasserstein distance between $X$ and $\mathcal{G}$. In the present paper, the chosen class of
functions $\mathscr{H}$ is going to be a special one that comes from the
representation of the Gamma density in Corollary
\ref{corollary_densitygamma}.

We begin by a proposition about the properties of the solution to the
Stein equation \eqref{equation_gammastein} for specific test functions
$h \in \mathscr{H}$ that is suitable for approximating $p_{\mathcal{G}_\alpha}(x)$ for $x>0$.

\begin{proposition}
\label{prop_steinmethod_x>0}
Let $x>0$ and the test function $h$ in the ordinary differential equation stated at \eqref{equation_gammastein} be
\begin{align*}
    h_{k+1}(y)=\mathds{1}_{\{y>x\}}\nu_{k+1}(y), \quad y\in \R
\end{align*}
where $\nu_{k+1}$, $k\geq 0$, is defined at \eqref{def_nuk}. Then, a
solution $f_{h_{k+1}}: \R\setminus \{ 0\}\to \R$ to equation \eqref{equation_gammastein} exists and satisfies, for every $y\neq 0$,
\begin{align}
\label{estimate_derivativesolutionstein_x>0}
 \max\left\{ \abs{f_{h_{k+1}}(y)},  \abs{f'_{h_{k+1}}(y)}\right\}\leq d_1(x)\sum_{i=1}^{\ceil{\alpha}-1} \abs{y}^{\alpha-i}+\frac{d_2(x)}{\abs{y}}+d_3(x). 
\end{align}
Here the factors $d_1(x),d_2(x)$ and $d_3(x)$ are positive and finite
for every $x>0$. 

Consequently if $F$ is a random variable in
$\mathbb{D}^{1,4}$ such that $\E{F}=0$, $\E{F^2}=\alpha$ and $\E{\frac{1}{(F+\alpha)^2}}<\infty$, $\E{(F+\alpha)^{2(\alpha-i)}}<\infty$ for $1\leq i\leq \ceil{\alpha}-1$ then, for
every $x >0$, we have the pointwise estimate
    \begin{align}
    \label{esimate_steinmethod_x>0}
      &\abs{\E{\mathds{1}_{\{F+\alpha>x\}}\nu_{k+1}(F+\alpha)}- \E{\mathds{1}_{\{\mathcal{G}_{\alpha}>x\}}\nu_{k+1}(\mathcal{G}_{\alpha})}}\nonumber\\
      &\qquad \leq  \brac{d_1(x)\sum_{i=1}^{\ceil{\alpha}-1}\E{(F+\alpha)^{2(\alpha-i)}}^{1/2}+ {d_2(x)}\E{\frac{1}{(F+\alpha)^2}}^{1/2}+{d_3(x)}}\nonumber\\
      &\hspace{17em}\times\E{\brac{{{F+\alpha}} -\inner{DF,-DL^{-1}F}_\mathfrak{H}}^2}^{1/2}.
    \end{align}
\end{proposition}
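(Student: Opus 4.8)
The plan is to solve the first–order linear Stein equation \eqref{equation_gammastein} explicitly, extract a pointwise bound on its solution, and then transport that bound to the expectation gap through a Malliavin integration by parts. First I would identify the centering constant: since $h_{k+1}(y)=\mathds{1}_{\{y>x\}}\nu_{k+1}(y)$, Corollary \ref{corollary_densitygamma} gives $\E{h_{k+1}(\mathcal{G}_\alpha)}=\E{\mathds{1}_{\{\mathcal{G}_\alpha>x\}}\nu_{k+1}(\mathcal{G}_\alpha)}=p^{(k)}_{\mathcal{G}_\alpha}(x)$. Dividing \eqref{equation_gammastein} by $y$, the integrating factor is $y^\alpha e^{-y}$ on $(0,\infty)$ (and $\abs{y}^\alpha e^{-y}$ on $(-\infty,0)$), which produces the solution
\[
f_{h_{k+1}}(y)=\frac{e^{y}}{y^{\alpha}}\int_{0}^{y}s^{\alpha-1}e^{-s}\brac{h_{k+1}(s)-\E{h_{k+1}(\mathcal{G}_\alpha)}}\,ds,\qquad y>0,
\]
and an analogous formula for $y<0$. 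The crucial fact here is that the integrand is centered against the Gamma weight, i.e. $\int_0^\infty s^{\alpha-1}e^{-s}\brac{h_{k+1}(s)-\E{h_{k+1}(\mathcal{G}_\alpha)}}\,ds=0$, so this solution also equals $-\frac{e^y}{y^\alpha}\int_y^\infty(\dots)\,ds$; having both representations lets me estimate near $0$ and near $+\infty$ using whichever integral is small. This establishes existence of a solution.

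The step I expect to be the main obstacle is the pointwise estimate \eqref{estimate_derivativesolutionstein_x>0}. I would split the line into $0<y\le x$, $y>x$, and $y<0$, exploiting that $h_{k+1}-\E{h_{k+1}(\mathcal{G}_\alpha)}$ is constant off $(x,\infty)$. Substituting the explicit expansion $\nu_{k+1}(s)=(-1)^k\sum_{i=0}^{k+1}\binom{k+1}{i}\prod_{j=1}^i(j-\alpha)s^{-i}$ from \eqref{def_nuk} turns each integral into a finite combination of incomplete–Gamma integrals $\int s^{\alpha-1-i}e^{-s}\,ds$, which, after being multiplied by the prefactor $e^{y}/\abs{y}^{\alpha}$ and majorized (repeatedly integrating the incomplete Gamma factor by parts), can be dominated by the three families of terms on the right–hand side: the powers $\abs{y}^{\alpha-i}$ for $1\le i\le\ceil{\alpha}-1$, the reciprocal $1/\abs{y}$, and a bounded remainder; all dependence on the fixed endpoint $x$ and on the centering constant $p^{(k)}_{\mathcal{G}_\alpha}(x)$ is absorbed into the finite positive factors $d_1(x),d_2(x),d_3(x)$. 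The companion bound on $f'_{h_{k+1}}$ then comes for free from the equation itself, since $f'_{h_{k+1}}(y)=\brac{h_{k+1}(y)-\E{h_{k+1}(\mathcal{G}_\alpha)}-(\alpha-y)f_{h_{k+1}}(y)}/y$ and $\nu_{k+1}$ is bounded on $\{y>x\}$.

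For the consequence \eqref{esimate_steinmethod_x>0}, I would evaluate \eqref{equation_gammastein} at $X=F+\alpha$ and take expectations, using $\alpha-(F+\alpha)=-F$, to get
\[
\E{h_{k+1}(F+\alpha)}-\E{h_{k+1}(\mathcal{G}_\alpha)}=\E{(F+\alpha)f'_{h_{k+1}}(F+\alpha)}-\E{Ff_{h_{k+1}}(F+\alpha)}.
\]
Because $\E{F}=0$ and $F\in\mathbb{D}^{1,2}$, the Malliavin integration-by-parts identity $\E{F\psi(F)}=\E{\psi'(F)\inner{DF,-DL^{-1}F}_{\mathfrak{H}}}$ (which follows from the duality relation \eqref{intbypart_malliavin}, the chain rule \eqref{mal_chain_rule}, and $LL^{-1}F=F$) applied to $\psi=f_{h_{k+1}}(\cdot+\alpha)$ converts the last term into $\E{f'_{h_{k+1}}(F+\alpha)\inner{DF,-DL^{-1}F}_{\mathfrak{H}}}$, whence
\[
\E{h_{k+1}(F+\alpha)}-\E{h_{k+1}(\mathcal{G}_\alpha)}=\E{f'_{h_{k+1}}(F+\alpha)\brac{(F+\alpha)-\inner{DF,-DL^{-1}F}_{\mathfrak{H}}}}.
\]
One subtlety is that $f'_{h_{k+1}}$ has a jump at $x$, so this identity should be justified by approximation, using that $f_{h_{k+1}}$ is Lipschitz and that $F+\alpha$ has an absolutely continuous law. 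A single Cauchy–Schwarz step then factors the right-hand side as $\E{f'_{h_{k+1}}(F+\alpha)^2}^{1/2}$ times $\E{\brac{(F+\alpha)-\inner{DF,-DL^{-1}F}_{\mathfrak{H}}}^2}^{1/2}$; feeding the pointwise bound on $f'_{h_{k+1}}$ into the first factor and applying the triangle inequality in $L^2$, together with the assumed finiteness of $\E{(F+\alpha)^{-2}}$ and of $\E{(F+\alpha)^{2(\alpha-i)}}$, reproduces exactly the three-term prefactor of \eqref{esimate_steinmethod_x>0}.
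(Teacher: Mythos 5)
Your overall route coincides with the paper's: the same explicit solution via the integrating factor, the same three-case analysis ($y<0$, $0<y\le x$, $y>x$) with repeated integration by parts on incomplete Gamma integrals, and the same final step (evaluate the Stein equation at $F+\alpha$, use $\E{F\psi(F+\alpha)}=\E{\psi'(F+\alpha)\inner{DF,-DL^{-1}F}_{\mathfrak{H}}}$, then Cauchy--Schwarz). However, there is a genuine gap at the step you claim ``comes for free''. You propose to first prove the bound $\abs{f_{h_{k+1}}(y)}\le d_1(x)\sum_{i=1}^{\ceil{\alpha}-1}\abs{y}^{\alpha-i}+d_2(x)/\abs{y}+d_3(x)$ and then read off the bound on $f'_{h_{k+1}}$ from the equation $f'_{h_{k+1}}(y)=\brac{h_{k+1}(y)-\E{h_{k+1}(\mathcal{G}_\alpha)}-(\alpha-y)f_{h_{k+1}}(y)}/y$. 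Dividing your claimed $f$-bound by $\abs{y}$ produces a term of order $\alpha d_2(x)/\abs{y}^2$ near the origin, which is \emph{not} dominated by $d_1(x)\sum_{i}\abs{y}^{\alpha-i}+d_2(x)/\abs{y}+d_3(x)$; so \eqref{estimate_derivativesolutionstein_x>0} for $f'_{h_{k+1}}$ does not follow from the bound on $f_{h_{k+1}}$ in the form you state it. The repair is to note that $f_{h_{k+1}}$ itself needs no $1/\abs{y}$ term: since $h_{k+1}$ vanishes on $(-\infty,x]$ and $x>0$, near $y=0$ one has $f_{h_{k+1}}(y)=-\E{h_{k+1}(\mathcal{G}_\alpha)}\,e^{y}\abs{y}^{-\alpha}\int_0^y \abs{t}^{\alpha-1}e^{-t}dt$, which stays bounded (it tends to $-\E{h_{k+1}(\mathcal{G}_\alpha)}/\alpha$). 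Only with this sharper $f$-bound (powers plus a constant, no $1/\abs{y}$) does the ODE argument give the stated bound for $f'_{h_{k+1}}$, the $1/\abs{y}$ contribution then coming from $\brac{h_{k+1}(y)-\E{h_{k+1}(\mathcal{G}_\alpha)}}/y$ and $\alpha f_{h_{k+1}}(y)/y$. The paper sidesteps this trap by bounding $f'_{h_{k+1}}$ directly from the differentiated integral formula, where the only division by $y$ hits the bounded local term $h_{k+1}(y)-\E{h_{k+1}(\mathcal{G}_\alpha)}$.

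A second, smaller inaccuracy: to extend the integration-by-parts identity to $f_{h_{k+1}}$ you invoke that $f_{h_{k+1}}$ is Lipschitz; it is not (its derivative blows up like $1/\abs{y}$ at the origin and, when $\alpha>1$, grows like $\abs{y}^{\alpha-1}$ at infinity). The paper instead proves the pointwise bounds first and then uses them, together with the hypotheses $\E{(F+\alpha)^{-2}}<\infty$ and $\E{(F+\alpha)^{2(\alpha-i)}}<\infty$, to check integrability of both sides and pass to the limit by dominated convergence; your approximation step should be justified in that way rather than by a Lipschitz property.
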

\begin{proof}
 A solution to the differential equation \eqref{equation_gammastein} is
\begin{align*}
    f_{h_{k+1}}(y)&= e^yy^{-\alpha}\int_0^y\brac{h_{k+1}(t)-\E{h_{k+1}(\mathcal{G}_\alpha)}}t^{\alpha-1}e^{-t}dt. 
\end{align*}
This solution is well-defined for all $\alpha>0$ and $y\in \R\setminus \{ 0\}$. Indeed, $e^yy^{-\alpha}<\infty$ for $y\in \R\setminus \{ 0\}$. Meanwhile, $x>0$ so that 
\begin{align*}
    \nu_{k+1}(t)=(-1)^k\sum_{i=0}^{k+1}{k+1\choose i}\prod_{j=1}^i(j-\alpha)\frac{1}{t^i}. 
\end{align*}
is well-defined on $[x,\infty)$, and hence for all $y\in \R$, 
\begin{align*}
    \int_0^y h_{k+1}(t)t^{\alpha-1}e^{-t}dt=\int_0^y \mathds{1}_{\{t>x \}} \nu_{k+1}(t)t^{\alpha-1}e^{-t}dt<\infty. 
\end{align*}

We will now establish a bound on $f'_{h_{k+1}}$. In fact since 
\begin{align*}
 f'_{h_{k+1}}(y)&=f_{h_{k+1}}(y)-\alpha y^{-\alpha-1}e^y\int_0^y\brac{h_{k+1}(t)-\E{h_{k+1}(\mathcal{G}_\alpha)}}t^{\alpha-1}e^{-t}dt\\
 &\quad +\frac{h_{k+1}(y)-\E{h_{k+1}(\mathcal{G}_\alpha)}}{y}, 
\end{align*}
the argument to bound $f'_{h_{k+1}}$ will also yield a bound on $f_{h_{k+1}}$. Therefore a bit unconventionally, we will proceed to bound the derivative first and consider the original function later.

We have, for $y \neq 0$,
\begin{align}
\label{formula_derivativef_hx>0}
    f'_{h_{k+1}}(y)&=\brac{y^{-\alpha}-\alpha y^{-\alpha-1}}e^y\int_0^y\brac{h_{k+1}(t)-\E{h_{k+1}(\mathcal{G}_\alpha)}}t^{\alpha-1}e^{-t}dt\nonumber\\
    &\quad +\frac{h_{k+1}(y)-\E{h_{k+1}(\mathcal{G}_\alpha)}}{y}.
\end{align}
In the case where $-\infty<y<0$, we can write
\begin{align}
\label{estimate_fh'y<0}
\abs{f'_{h_{k+1}}(y)}&\leq \abs{\E{h_{k+1}(\mathcal{G}_\alpha)}}\brac{\abs{y}^{-\alpha}+\alpha \abs{y}^{-\alpha-1}}e^y\int_y^0 \abs{t}^{\alpha-1}e^{-y}dt+\frac{\abs{\E{h_{k+1}(\mathcal{G}_\alpha)}}}{\abs{y}}\nonumber\\
&\leq \frac{\abs{\E{h_{k+1}(\mathcal{G}_\alpha)}}}{\alpha}\brac{1+\frac{\alpha} {\abs{y}}}+\frac{\abs{\E{h_{k+1}(\mathcal{G}_\alpha)}}}{\abs{y}}, 
\end{align}
noting that
$\E{h_{k+1}(\mathcal{G}_\alpha)}=p^{(k)}_{\mathcal{G}_\alpha}(x)$ by
Corollary \ref{corollary_densitygamma}. In the case where $0< y\leq
x$, we have
\begin{align}
\label{estimate_fh')<y<x}
    \abs{f'_{h_{k+1}}(y)}&\leq \abs{\E{h_{k+1}(\mathcal{G}_\alpha)}}\brac{y^{-\alpha}+\alpha y^{-\alpha-1}}e^x\int_0^y t^{\alpha-1}dt+\frac{\abs{\E{h_{k+1}(\mathcal{G}_\alpha)}}}{y}\nonumber\\
&\leq \frac{\abs{\E{h_{k+1}(\mathcal{G}_\alpha)}}}{\alpha}e^x\brac{1+\frac{\alpha} {y}}+\frac{\abs{\E{h_{k+1}(\mathcal{G}_\alpha)}}}{y}.
\end{align}
The case where $y>x$ requires more efforts. The Gamma density
representation in Corollary \ref{corollary_densitygamma} implies that
\begin{align*}
    \int_0^y h_{k+1}(t)t^{\alpha-1}e^{-t}dt=\Gamma(\alpha)\brac{p^{(k)}_{\mathcal{G}_\alpha}(x)-p^{(k)}_{\mathcal{G}_\alpha}(y)}.
\end{align*}
We also know that
\begin{align*}
\int_0^y t^{\alpha-1}e^{-t}dt=\Gamma(\alpha)-\int_y^\infty t^{\alpha-1}e^{-t}dt. 
\end{align*}
We hence deduce from \eqref{formula_derivativef_hx>0} that when $y>x$,
we have
\begin{align}
\label{equation_derivativefh_y>x>0}
    f'_{h_{k+1}}(y)
    &=-\Gamma(\alpha)\brac{y^{-\alpha}-\alpha y^{-\alpha-1}}e^yp^{(k)}_{\mathcal{G}_\alpha}(y)\nonumber\\
    &\quad +p^{(k)}_{\mathcal{G}_\alpha}(x)\brac{y^{-\alpha}-\alpha y^{-\alpha-1}}e^y\int_y^\infty t^{\alpha-1}e^{-t}dt+\frac{h_{k+1}(y)-\E{h_{k+1}(\mathcal{G}_\alpha)}}{y}\nonumber\\
   &=\mathcal{A}_1+\mathcal{A}_2+ \frac{h_{k+1}(y)-\E{h_{k+1}(\mathcal{G}_\alpha)}}{y}. 
\end{align}
Let us consider each individual term on the right hand side of \eqref{equation_derivativefh_y>x>0}. Regarding $\mathcal{A}_1$, identity \eqref{formula_gammadensityderivative_original} implies the bound
\begin{align*}
   \mathcal{A}_1&= -\Gamma(\alpha)\brac{y^{-\alpha}-\alpha y^{-\alpha-1}}e^yp^{(k)}_{\mathcal{G}_\alpha}(y)\\
    &\leq \brac{y^{-1}+\alpha y^{-2}}\sum_{i=0}^{k}{k\choose i}\prod_{j=1}^i\abs{j-\alpha}\frac{1}{y^i}\nonumber\\
    &\leq \brac{\frac{1}{x}+\alpha \frac{1}{x^2}}\sum_{i=0}^{k}{k\choose i}\prod_{j=1}^i\abs{j-\alpha}\frac{1}{x^i}. 
\end{align*}
Next, we handle the term $\mathcal{A}_2$. Let $N=\ceil{\alpha}+1$.  By performing integration by parts iteratively, one obtains the identity
\begin{align*}
    \int t^{\alpha-1}e^{-t}dt=-\sum_{n=1}^{N-1} \prod_{k=1}^{n-1}(\alpha-k) t^{\alpha-n}e^{-t}+\prod_{k=1}^{N-1}(\alpha-k)\int t^{\alpha-N}e^{-t}dt.
\end{align*}
With the convention that $\prod_{k=1}^0(\alpha-k)=1$, we can write
\begin{align*}
    e^y\int_y^\infty t^{\alpha-1}e^{-t}dt=
    \sum_{n=1}^{N-1} \prod_{k=1}^{n-1}(\alpha-k)y^{\alpha-n}+\prod_{k=1}^{N-1}(\alpha-k)e^y\int_y^\infty t^{\alpha-N}e^{-t}dt. 
\end{align*}
In particular, since $\alpha-N+1<0$, we can bound the last integral on
the right hand side of the above identity by
\begin{align*}
    e^y\int_y^\infty t^{\alpha-N}e^{-t}dt\leq  e^y\int_y^\infty t^{\alpha-N}e^{-y}dt=\frac{y^{\alpha-N+1}}{N-\alpha-1}. 
\end{align*}
This then yields
\begin{align*}
   \mathcal{A}_2&= p^{(k)}_{\mathcal{G}_\alpha}(x)\brac{y^{-\alpha}-\alpha y^{-\alpha-1}}e^y\int_y^\infty t^{\alpha-1}e^{-t}dt\\
    &\leq  p^{(k)}_{\mathcal{G}_\alpha}(x)\brac{y^{-\alpha}+\alpha y^{-\alpha-1}}\\
    &\qquad\times \brac{ \sum_{n=1}^{N-2} \prod_{k=1}^{n-1}(\alpha-k)y^{\alpha-n}+\prod_{k=1}^{N-2}(\alpha-k)y^{\alpha-N+1}+\prod_{k=1}^{N-1}(\alpha-k)\frac{y^{\alpha-N+1}}{N-\alpha-1}}\\
    &\leq  p^{(k)}_{\mathcal{G}_\alpha}(x)\brac{x^{-\alpha}+\alpha x^{-\alpha-1}}\\
    &\qquad\times\brac{\sum_{n=1}^{\ceil{\alpha}-1} \prod_{k=1}^{n-1}(\alpha-k)y^{\alpha-n}+\prod_{k=1}^{\ceil{\alpha}-1}(\alpha-k)x^{\alpha-\ceil{\alpha}}+\prod_{k=1}^{\ceil{\alpha}}(\alpha-k)\frac{x^{\alpha-\ceil{\alpha}}}{\ceil{\alpha}-\alpha}},
\end{align*}
where the last inequality comes from the fact that $y>x>0$ and
$N=\ceil{\alpha}+1$. Finally, we deduce from
\eqref{equation_derivativefh_y>x>0} that when $y>x$, it holds that
\begin{align}
\label{estimate_fh'y>x}
    \abs{f'_{h_{k+1}}(y)}&\leq \brac{\frac{1}{x}+\alpha \frac{1}{x^2}}\sum_{i=0}^{k}{k\choose i}\prod_{j=1}^i\abs{j-\alpha}\frac{1}{x^i}
    +p^{(k)}_{\mathcal{G}_\alpha}(x)\brac{x^{-\alpha}+\alpha x^{-\alpha-1}}\nonumber\\
    &\qquad\times \brac{\sum_{n=1}^{\ceil{\alpha}-1} \prod_{k=1}^{n-1}(\alpha-k)y^{\alpha-n}+\prod_{k=1}^{\ceil{\alpha}-1}(\alpha-k)x^{\alpha-\ceil{\alpha}}+\prod_{k=1}^{\ceil{\alpha}}(\alpha-k)\frac{x^{\alpha-\ceil{\alpha}}}{\ceil{\alpha}-\alpha}}\nonumber\\
    &\quad +\sum_{i=0}^{k+1}{k+1\choose i}\prod_{j=1}^i\abs{j-\alpha}\frac{1}{x^{i+1}}+\frac{\abs{\E{h_{k+1}(\mathcal{G}_\alpha)}}}{x}. 
\end{align}
Combining all of the above, the estimate on
$\abs{f'_{h_{k+1}}}$ at \eqref{estimate_derivativesolutionstein_x>0} follows from
\eqref{estimate_fh'y<0}, \eqref{estimate_fh')<y<x}, \eqref{estimate_fh'y>x} and Minkowski's inequality.

Now let us consider $f_{h_{k+1}}$. In a similar fashion, the estimates at \eqref{estimate_fh'y<0}, \eqref{estimate_fh')<y<x} and \eqref{estimate_fh'y>x} imply for $y\in \R$, 
\begin{align*}
   \abs{f_{h_{k+1}}(y)}&\leq     \frac{\abs{\E{h_{k+1}(\mathcal{G}_\alpha)}}}{\alpha}+\frac{\abs{\E{h_{k+1}(\mathcal{G}_\alpha)}}}{\alpha}e^x\\
   &+\frac{1}{x}\sum_{i=0}^{k}{k\choose i}\prod_{j=1}^i\abs{j-\alpha}\frac{1}{x^i}
    +p^{(k)}_{\mathcal{G}_\alpha}(x)x^{-\alpha}\nonumber\\
    &\quad \times\brac{\sum_{n=1}^{\ceil{\alpha}-1} \prod_{k=1}^{n-1}(\alpha-k)y^{\alpha-n}+\prod_{k=1}^{\ceil{\alpha}-1}(\alpha-k)x^{\alpha-\ceil{\alpha}}+\prod_{k=1}^{\ceil{\alpha}}(\alpha-k)\frac{x^{\alpha-\ceil{\alpha}}} {\ceil{\alpha}-\alpha}}.
\end{align*}

It remains to prove estimate \eqref{esimate_steinmethod_x>0}. To this
end, we refer to \cite[Theorem 2.9.1]{nourdinpeccatibook}: it holds for real-valued function $\psi$ of class $C^1$ with bounded derivative that 
\begin{align}
\label{intbypartnourdinpeccatibook}
    \E{F\psi(F+\alpha)}=\E{\psi'(F+\alpha)\inner{DF,-DL^{-1}F}_\frak{H}}. 
\end{align}
Now we claim that the assumptions $\E{\frac{1}{(F+\alpha)^2}}<\infty$, $\E{(F+\alpha)^{2(\alpha-i)}}<\infty$ for $1\leq i\leq \ceil{\alpha}-1$ and the newly obtained bound on $f_{h_{k+1}}, f'_{h_{k+1}}$ guarantee 
\begin{align}
\label{requiredbound}
    \E{\abs{Ff_{h_{k+1}}(F+\alpha)}}<\infty;\qquad \E{\abs{f'_{h_{k+1}}(F+\alpha)\inner{DF,-DL^{-1}F}_\frak{H}}}<\infty. 
\end{align}

We shall prove the above second bound and the first is easier. In fact, we have
\begin{align*}
    \left|\E{\abs{f'_{h_{k+1}}(F+\alpha)\inner{DF,-DL^{-1}F}_\frak{H}}} \right|^2 \leq \E{\abs{f'_{h_{k+1}}(F+\alpha)}^2}\times \E{\abs{\inner{DF,-DL^{-1}F}_\frak{H}}^2}. 
\end{align*}
The assumption $F\in \mathbb{D}^{1,4}$ guarantees that $\E{\abs{\inner{DF,-DL^{-1}F}_\frak{H}}^2}<\infty$. Moreover, \eqref{estimate_derivativesolutionstein_x>0} and Jensen's inequality imply that
\begin{align*}
    &\E{\abs{f'_{h_{k+1}}(F+\alpha)}^2}\\
    &\qquad\qquad\leq (\ceil{\alpha}+1)\brac{d_1(x)^2\sum_{i=1}^{\ceil{\alpha}-1} \E{\abs{F+\alpha}^{2(\alpha-i)}}+d_2(x)^2\E{\abs{\frac{1}{F+\alpha}}^2}+d_3(x)^2 }, 
\end{align*}
at which point the assumptions $\E{\abs{\frac{1}{F+\alpha}}^2}<\infty$, $\E{\abs{F+\alpha}^{2(\alpha-i)}}<\infty$ for $1\leq i\leq \ceil{\alpha}-1$ ensure the right hand side is finite for every $x>0$. We can conclude from these estimates that $ \E{\abs{f'_{h_{k+1}}(F+\alpha)\inner{DF,-DL^{-1}F}_\frak{H}}}<\infty$.

Thus, under \eqref{estimate_derivativesolutionstein_x>0}, the dominated convergence theorem and \eqref{requiredbound} imply \eqref{intbypartnourdinpeccatibook} holds for $f_{h_{k+1}}$, that is
\begin{align}
\label{intbypartforunboundedfunction}
    \E{Ff_{h_{k+1}}(F+\alpha)}=\E{f'_{h_{k+1}}(F+\alpha)\inner{DF,-DL^{-1}F}_\frak{H}}. 
\end{align}
This leads to
\begin{align*}
    \abs{\E{h_{k+1}(F+\alpha) }-\E{h_{k+1}(\mathcal{G}_\alpha) }}&=\abs{\E{(F+\alpha)f'_{h_{k+1}}(F+\alpha) }-\E{Ff_{h_{k+1}}(F+\alpha)}}\\
                                                                 &\leq  \E{\brac{f'_{h_{k+1}}(F+\alpha)}^2}^{1/2}\\
  &\hspace{7em}\times\E{\brac{{{F+\alpha}} -\inner{DF,-DL^{-1}F}_\mathfrak{H}}^2}^{1/2},
\end{align*}
at which point we apply estimate
\eqref{estimate_derivativesolutionstein_x>0}, concluding the proof.
\end{proof}
\begin{remark} It gives the impression that     $F$ 
needs to be more regular than in  $\mathbb{D}^{1,4}$ 
since the appearance of $L$ in \eqref{esimate_steinmethod_x>0}.  However, by the 
multiplier theorem (\cite[Corollary 7.1]{hubook}),
$L^{-1}$ is a bounded operator in     $L^p$ 
  for any $p>1$.
\end{remark} 



The following proposition is the analog of Proposition \ref{prop_steinmethod_x>0} for
another but related class of test functions that is suitable for approximating $p_{\mathcal{G}_\alpha}(x)$ for $x<0$. 
\begin{proposition}
\label{prop_steinmethod_x<0}
Let $x<0$ and the test function 
in the ordinary differential equation stated at \eqref{equation_gammastein} be
\begin{align*}
    h_{k+1}(y)=\mathds{1}_{\{y\leq x\}}\nu_{k+1}(y), \quad y\in \R
\end{align*}
where $\nu_{k+1},k\geq 0$, is defined at \eqref{def_nuk}. Then, a
solution $f_{h_{k+1}}:\R\setminus \{0\}\to \R$ to equation \eqref{equation_gammastein} exists and satisfies, for every $y\neq 0$,
\begin{align}
\label{estimate_derivativesolutionstein_x<0}
   \max\left\{ \abs{f_{h_{k+1}}(y)},  \abs{f'_{h_{k+1}}(y)}\right\}\leq   e_1(x)\sum_{i=0}^{\ceil{\alpha}-1}\abs{y}^{\alpha-i}+e_2(x),
\end{align}
where the factors $e_1(x)$ and $e_2(x)$ are positive  and finite for every $x<0$. Consequently, if $F$ is a random variable in $\mathbb{D}^{1,4}$ such that $\E{F}=0$ and $\E{F^2}=\alpha$, then, for
every $x<0$, we have the pointwise estimate
    \begin{align}
    \label{esimate_steinmethod_x<0}
      &\abs{\E{\mathds{1}_{\{F+\alpha\leq x\}}\nu_{k+1}(F+\alpha)}- \E{\mathds{1}_{\{\mathcal{G}_{\alpha}\leq x\}}\nu_{k+1}(\mathcal{G}_{\alpha})}}\nonumber\\
      &\qquad\qquad\qquad \leq  \brac{e_1(x)\sum_{i=0}^{\ceil{\alpha}-1}\E{\brac{F+\alpha}^{2(\alpha-i)}}^{1/2}+e_2(x)}\nonumber\\
      &\hspace{17em}\times \E{\brac{{{F+\alpha}} -\inner{DF,-DL^{-1}F}_\mathfrak{H}}^2}^{1/2}.
    \end{align}
\end{proposition}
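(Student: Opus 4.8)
The plan is to follow the proof of Proposition~\ref{prop_steinmethod_x>0} step by step, taking advantage of the fact that the regime $x<0$ simplifies matters considerably. The first observation is that, since $\mathcal{G}_\alpha>0$ almost surely, for $x<0$ the centering term vanishes, $\E{g_{k+1}(\mathcal{G}_\alpha)}=\E{\mathds{1}_{\{\mathcal{G}_\alpha\le x\}}\nu_{k+1}(\mathcal{G}_\alpha)}=0$, so the Stein equation \eqref{equation_gammastein} reduces to $yf'(y)+(\alpha-y)f(y)=g_{k+1}(y)$. Its solution, obtained through the integrating factor $y^{-\alpha}e^y$ exactly as in Proposition~\ref{prop_steinmethod_x>0}, is
\begin{align*}
f_{g_{k+1}}(y)=e^y y^{-\alpha}\int_0^y g_{k+1}(t)\,t^{\alpha-1}e^{-t}\,dt.
\end{align*}
Since $g_{k+1}$ is supported on $\{t\le x\}$ with $x<0$, the integrand is bounded away from the origin and no singularity at $t=0$ arises; this is precisely why one replaces the test function $\mathds{1}_{\{y>x\}}$ by $\mathds{1}_{\{y\le x\}}$ once $x$ becomes negative.

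Next I would carry out the case analysis on the sign and size of $y$. For $y>x$ the integrand vanishes on the range of integration, so $f_{g_{k+1}}(y)=0$ and $f'_{g_{k+1}}(y)=0$ there; only the region $y\le x<0$ requires work, where $\int_0^y g_{k+1}(t)t^{\alpha-1}e^{-t}dt=-\int_y^x\nu_{k+1}(t)t^{\alpha-1}e^{-t}dt$. Differentiating gives the analogue of \eqref{formula_derivativef_hx>0},
\begin{align*}
f'_{g_{k+1}}(y)=\brac{y^{-\alpha}-\alpha y^{-\alpha-1}}e^y\int_0^y g_{k+1}(t)t^{\alpha-1}e^{-t}dt+\frac{g_{k+1}(y)}{y}.
\end{align*}
To bound this I would, as in Proposition~\ref{prop_steinmethod_x>0}, use $\abs{y}\ge\abs{x}$ to replace the prefactor by the constant $\abs{x}^{-\alpha}+\alpha\abs{x}^{-\alpha-1}$, and use the elementary inequality $e^{y-t}\le 1$ for $t\ge y$ to dominate the exponential, leaving $\int_y^x\abs{\nu_{k+1}(t)}\abs{t}^{\alpha-1}dt$. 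Expanding $\abs{\nu_{k+1}(t)}\abs{t}^{\alpha-1}$ as a finite sum of powers $\abs{t}^{\alpha-1-i}$ with $0\le i\le k+1$ and integrating, the genuinely growing contributions are those with $\alpha-i>0$, i.e. $i\le\ceil{\alpha}-1$, which produce the term $\abs{y}^{\alpha-i}$; all remaining powers, together with $\abs{g_{k+1}(y)}/\abs{y}$, are bounded by constants depending only on $x$ since $\abs{y}\ge\abs{x}$. This yields \eqref{estimate_derivativesolutionstein_x<0}, and the same computation (without the extra $y^{-\alpha-1}$ term) bounds $\abs{f_{g_{k+1}}}$.

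With the pointwise bound in hand, the derivation of \eqref{esimate_steinmethod_x<0} is identical to the end of the proof of Proposition~\ref{prop_steinmethod_x>0}. Writing $y=F+\alpha$ in the Stein equation and taking expectations gives
\begin{align*}
\E{g_{k+1}(F+\alpha)}-\E{g_{k+1}(\mathcal{G}_\alpha)}=\E{(F+\alpha)f'_{g_{k+1}}(F+\alpha)}-\E{Ff_{g_{k+1}}(F+\alpha)},
\end{align*}
and the integration by parts formula \eqref{intbypartnourdinpeccatibook}, justified by dominated convergence when the relevant moments are finite and the estimate being trivial otherwise, turns the right-hand side into $\E{f'_{g_{k+1}}(F+\alpha)\brac{(F+\alpha)-\inner{DF,-DL^{-1}F}_{\mathfrak{H}}}}$. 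A single application of the Cauchy--Schwarz inequality, followed by Minkowski's inequality applied to the pointwise bound \eqref{estimate_derivativesolutionstein_x<0}, then gives \eqref{esimate_steinmethod_x<0}.

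The only genuine obstacle is a bookkeeping one: interpreting $t^{\alpha-1}$ and $y^{-\alpha}$ for negative arguments and controlling the growth of the solution as $y\to-\infty$. Both are handled exactly as in Proposition~\ref{prop_steinmethod_x>0}, by passing to absolute values in the bounds and exploiting $e^{y-t}\le 1$. Because the support $\{t\le x\}$ keeps everything away from the origin, the $1/\abs{y}$ contribution present in \eqref{estimate_derivativesolutionstein_x>0} disappears here, which is why the final estimate requires no negative-moment hypothesis on $F+\alpha$.
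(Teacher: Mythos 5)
Your proposal is correct and follows essentially the same route as the paper's proof: the same explicit solution with vanishing centering term, the same observation that $f_{g_{k+1}}$ vanishes on $[x,\infty)$ so that only the region $y<x<0$ needs work, the same bounding devices ($e^{y-t}\leq 1$ for $t\geq y$, the prefactor and the $g_{k+1}(y)/y$ term controlled via $\abs{y}\geq\abs{x}$, and the split of the powers of $\abs{y}$ at $i\leq\ceil{\alpha}-1$), and the same conclusion via the integration by parts formula \eqref{intbypartnourdinpeccatibook} and the Cauchy--Schwarz inequality. Your closing remark correctly identifies why, in contrast with Proposition \ref{prop_steinmethod_x>0}, no negative-moment hypothesis on $F+\alpha$ is needed here.
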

\begin{proof}
The fact that $\mathcal{G}_\alpha>0$ almost surely implies that
$\E{h_{k+1}(\mathcal{G}_\alpha)}=0$ when $x<0$. A solution to the
Stein equation \eqref{equation_gammastein} is then
\begin{align*}
    f_{h_{k+1}}(y)=e^yy^{-\alpha}\int_0^y{h_{k+1}(t)}t^{\alpha-1}e^{-t}dt. 
\end{align*}
Bounding $f_{h_{k+1}}$ is similar but simpler than bounding $f'_{h_{k+1}}$, so we will only show the latter. When $y\geq x$ and $y\neq 0$, the definition of $h_{k+1}$ states that
$f_{h_{k+1}}(y)=0$ and therefore $f'_{h_{k+1}}(y)=0$ on
$[x,\infty)$. What remains is the case where $y<x<0$.  We have
\begin{align*}
    \abs{f'_{h_{k+1}}(y)}&=\abs{\brac{{y}^{-\alpha}+\alpha {y}^{-\alpha-1}}e^y\int_x^y{{\nu_{k+1}(t)}}{t}^{\alpha-1}e^{-t}dt+{\frac{h_{k+1}(y)}{y}}}\\
    &\leq \brac{\abs{y}^{-\alpha}+\alpha \abs{y}^{-\alpha-1}}e^y\int_y^x{\abs{\nu_{k+1}(t)}}\abs{t}^{\alpha-1}e^{-y}dt+\abs{\frac{h_{k+1}(y)}{y}}\\
    &\leq \brac{\abs{y}^{-\alpha}+\alpha \abs{y}^{-\alpha-1}}\sum_{i=0}^{k+1}e_i\brac{\abs{x}^{\alpha-i}+\abs{y}^{\alpha-i}}+\sum_{i=0}^{k+1}\frac{f_i}{\abs{y}^{i+1}}
\end{align*}
where $e_i,f_i$, $0 \leq i \leq k+1$, are some positive constants. Since $y<x<0$, it follows that
\begin{align*}
    &\abs{f'_{h_{k+1}}(y)}\\&\leq  \brac{\abs{y}^{-\alpha}+\alpha \abs{y}^{-\alpha-1}}\brac{\sum_{i=0}^{k+1}e_i\abs{x}^{\alpha-i}+\sum_{i=0}^{\ceil{\alpha}-1}e_i\abs{y}^{\alpha-i}+\sum_{i=\ceil{\alpha}}^{k+1}e_i\abs{y}^{\alpha-i}}
    +\sum_{i=0}^{k+1}\frac{f_i}{\abs{y}^{i+1}}\\
    &\leq  \brac{\abs{x}^{-\alpha}+\alpha \abs{x}^{-\alpha-1}}\brac{\sum_{i=0}^{k+1}e_i\abs{x}^{\alpha-i}+\sum_{i=0}^{\ceil{\alpha}-1}e_i\abs{y}^{\alpha-i}+\sum_{i=\ceil{\alpha}}^{k+1}e_i\abs{x}^{\alpha-i}}
    +\sum_{i=0}^{k+1}\frac{f_i}{\abs{x}^{i+1}}. 
\end{align*}
The above inequalities and Minkowski's inequality then imply the
estimate \eqref{estimate_derivativesolutionstein_x<0}. Next, the
estimate \eqref{esimate_steinmethod_x<0} is obtained in the same way
as the estimate \eqref{esimate_steinmethod_x>0}. 
\end{proof}

Finally, we state and prove a third analogous proposition to Propositions
\ref{prop_steinmethod_x>0} and \ref{prop_steinmethod_x<0} for a third kind of test functions that is suitable for approximating $p_{\mathcal{G}_\alpha}(0)$.

\begin{proposition}
  \label{prop_steinmethod_x=0}
Let $\alpha>k+1$ and the test function $h$ in the ordinary differential equation stated at \eqref{equation_gammastein} be
\begin{align*}
    m_{k+1}(y)=\mathds{1}_{\{y< 0\}}\nu_{k+1}(y), \quad y\in \R
\end{align*}
where $\nu_{k+1},k\geq 0$, is defined at \eqref{def_nuk}. Then, a
solution $f_{m_{k+1}}:\R\setminus \{ 0\}\to \R$ to equation \eqref{equation_gammastein} exists
and satisfies, for every $y\neq 0$,
\begin{align*}
    \max\left\{ \abs{g_{m_{k+1}}(y)},  \abs{g'_{m_{k+1}}(y)}\right\}\leq   f_1\sum_{i=0}^{\ceil{\alpha}-1}\abs{y}^{\alpha-i}+f_2,
\end{align*}
where the quantities $f_1$ and $f_2$ are positive
constants. Consequently, if $F$ is a random variable in
$\mathbb{D}^{1,4}$ such that $\E{F}=0$ and $\E{F^2}=\alpha$, then, we
have the estimate
    \begin{align*}
      &\abs{\E{\mathds{1}_{\{F+\alpha< 0\}}\nu_{k+1}(F+\alpha)}- \E{\mathds{1}_{\{\mathcal{G}_{\alpha}< 0\}}\nu_{k+1}(\mathcal{G}_{\alpha})}} \\&\leq  \brac{f_1\sum_{i=0}^{\ceil{\alpha}-1}\E{\brac{F+\alpha}^{2(\alpha-i)}}^{1/2}+f_2}
      \E{\brac{{F+\alpha} -\inner{DF,-DL^{-1}F}}^2}^{1/2}.
    \end{align*}
\end{proposition}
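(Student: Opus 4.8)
The plan is to mirror the three stages of Propositions \ref{prop_steinmethod_x>0} and \ref{prop_steinmethod_x<0}, now taking the threshold to be the origin. First I would observe that, because $\mathcal{G}_\alpha>0$ almost surely, the event $\{\mathcal{G}_\alpha\leq 0\}$ is null, so $\E{m_{k+1}(\mathcal{G}_\alpha)}=0$ and the Stein equation \eqref{equation_gammastein} with $h=m_{k+1}$ becomes $yf'(y)+(\alpha-y)f(y)=\mathds{1}_{\{y\leq 0\}}\nu_{k+1}(y)$. As before this is solved by the integrating-factor formula
\begin{align*}
f_{m_{k+1}}(y)=e^y y^{-\alpha}\int_0^y m_{k+1}(t)t^{\alpha-1}e^{-t}\,dt,
\end{align*}
and since $m_{k+1}$ is supported on $\{t\leq 0\}$ the integrand vanishes on $(0,y)$ whenever $y>0$; hence $f_{m_{k+1}}\equiv 0$ and $f'_{m_{k+1}}\equiv 0$ on $(0,\infty)$, so that only the half-line $y<0$ requires analysis.

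Second, on $(-\infty,0)$ I would bound $f_{m_{k+1}}$ and $f'_{m_{k+1}}$ exactly as in the proof of Proposition \ref{prop_steinmethod_x<0}. Using $m_{k+1}(y)=\nu_{k+1}(y)$ for $y<0$, differentiating the solution formula as in \eqref{formula_derivativef_hx>0}, taking absolute values and estimating $e^{-t}\leq e^{-y}$ on $(y,0)$ leads to a bound of the shape
\begin{align*}
\abs{f'_{m_{k+1}}(y)}\leq\brac{\abs{y}^{-\alpha}+\alpha\abs{y}^{-\alpha-1}}\int_y^0\abs{\nu_{k+1}(t)}\abs{t}^{\alpha-1}\,dt+\frac{\abs{\nu_{k+1}(y)}}{\abs{y}}.
\end{align*}
I would then insert the expansion $\abs{\nu_{k+1}(t)}\leq\sum_{i=0}^{k+1}a_i\abs{t}^{-i}$ read off from \eqref{def_nuk} and integrate term by term. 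The decisive difference with the case $x<0$ is that the integral now descends all the way to the origin, where $\nu_{k+1}$ has a pole of order $k+1$; the weight $\abs{t}^{\alpha-1}$ controls it precisely because the standing assumption $\alpha>k+1$ renders every exponent $\alpha-1-i$ with $0\leq i\leq k+1$ strictly larger than $-1$, so that each $\int_y^0\abs{t}^{\alpha-1-i}\,dt$ converges. Collecting the resulting powers of $\abs{y}$ and invoking Minkowski's inequality should produce the pointwise bound with finite constants $f_1,f_2$ depending only on $k$ and $\alpha$, the bound on $f_{m_{k+1}}$ itself following along the same (simpler) lines.

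Third, to derive the consequential estimate I would argue as for \eqref{esimate_steinmethod_x>0}: evaluating the Stein operator at $X=F+\alpha$ and applying the integration-by-parts identity \eqref{intbypartnourdinpeccatibook} gives
\begin{align*}
\E{m_{k+1}(F+\alpha)}-\E{m_{k+1}(\mathcal{G}_\alpha)}=\E{f'_{m_{k+1}}(F+\alpha)\brac{(F+\alpha)-\inner{DF,-DL^{-1}F}_\mathfrak{H}}}.
\end{align*}
When $F$ belongs to the $q$-th Wiener chaos one has $-DL^{-1}F=q^{-1}DF$, so that $\inner{DF,-DL^{-1}F}_\mathfrak{H}=q^{-1}\norm{DF}^2_\mathfrak{H}$ and the second factor equals $q^{-1}\brac{q(F+\alpha)-\norm{DF}^2_\mathfrak{H}}$; a single Cauchy--Schwarz step combined with the pointwise bound on $f'_{m_{k+1}}$ then yields the claimed inequality, the powers $\abs{F+\alpha}^{\alpha-i}$ turning into the moment factors $\E{(F+\alpha)^{2(\alpha-i)}}^{1/2}$ after absorbing the constant $q^{-1}$ into $f_1,f_2$.

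The step I expect to be the genuine obstacle is the uniform control of the solution near the origin. In contrast with the case $x<0$, where the interval of integration stays bounded away from $0$ and the pole of $\nu_{k+1}$ is never met, here the lower limit coincides with that pole, so the naive term-by-term integration tends to generate negative powers of $\abs{y}$ that blow up as $y\to 0^-$. Reconciling this behaviour with the polynomial bound asserted in the statement, and in particular verifying that the contribution of a neighbourhood of the origin is genuinely governed by the exponent $\alpha>k+1$, is the delicate point on which the whole argument rests; it is also exactly the threshold at which $p^{(k)}_{\mathcal{G}_\alpha}(0)=0$, as recorded after \eqref{formula_gammadensityderivative_original}.
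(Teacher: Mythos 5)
Your plan is, in outline, the same as the paper's own (very terse) proof: the paper merely records that $f_{m_{k+1}}$ involves the integral $\int_0^y t^{\alpha-k-2}e^{-t}\,dt$, which converges exactly when $\alpha>k+1$, and then declares the rest identical to Proposition \ref{prop_steinmethod_x<0}. Your first and third steps match what is intended there: $\E{m_{k+1}(\mathcal{G}_\alpha)}=0$, the solution vanishes on $(0,\infty)$, and the final estimate comes from \eqref{intbypartnourdinpeccatibook} plus Cauchy--Schwarz, with $\inner{DF,-DL^{-1}F}_\mathfrak{H}=q^{-1}\norm{DF}^2_\mathfrak{H}$ for $F$ in the $q$-th chaos accounting for the factor $q(F+\alpha)-\norm{DF}^2_\mathfrak{H}$.

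The obstacle you single out in your closing paragraph, however, is a genuine gap, and it cannot be closed: the step fails because the pointwise bound you are trying to prove is false near the origin. Carrying out your step 2 yields, for $y<0$, terms $\brac{\abs{y}^{-\alpha}+\alpha\abs{y}^{-\alpha-1}}\abs{y}^{\alpha-i}/(\alpha-i)=\abs{y}^{-i}/(\alpha-i)+\alpha\abs{y}^{-i-1}/(\alpha-i)$ for $0\leq i\leq k+1$, plus the boundary term $\abs{\nu_{k+1}(y)}/\abs{y}=O(\abs{y}^{-k-2})$; in Proposition \ref{prop_steinmethod_x<0} these negative powers are absorbed into $x$-dependent constants because $\abs{y}>\abs{x}>0$ there, but at $x=0$ nothing absorbs them. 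This blow-up is not an artifact of crude estimation. Take $k=0$ and $\alpha=3$: the solution formula gives exactly $f_{m_1}(y)=-1/y$ on $(-\infty,0)$ (indeed $y\cdot y^{-2}+(3-y)(-1/y)=1-2/y=\nu_1(y)$), so $\max\{\abs{f_{m_1}(y)},\abs{f'_{m_1}(y)}\}\sim\abs{y}^{-2}\to\infty$ as $y\to0^-$, while the asserted bound $f_1\brac{\abs{y}^3+\abs{y}^2+\abs{y}}+f_2$ stays bounded. In general the least singular solution behaves like $-\prod_{j=1}^{k}(\alpha-j)\,y^{-k-1}$ at $0^-$, and adding a multiple of the homogeneous solution $e^y\abs{y}^{-\alpha}$ only worsens this (since $\alpha>k+1$); hence \emph{no} solution of \eqref{equation_gammastein} with $h=m_{k+1}$ satisfies the claimed estimate, and neither your argument nor the paper's sketch can prove the proposition as stated. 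To make your step 3 rigorous one must keep the negative powers of $\abs{y}$ in the bound on $f'_{m_{k+1}}$ and pay for them with negative moments of order $2(k+2)$, i.e. $\E{\abs{F+\alpha}^{-2(k+2)}\mathds{1}_{\{F+\alpha<0\}}}<\infty$; note that some such assumption is unavoidable anyway, since under the stated hypotheses ($F\in\mathbb{D}^{1,2}$, $\E{F}=0$, $\E{F^2}=\alpha$) even the left-hand side $\E{\mathds{1}_{\{F+\alpha\leq0\}}\nu_{k+1}(F+\alpha)}$ need not be finite, $\nu_{k+1}$ having a pole of order $k+1$ at the origin.
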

\begin{proof}
 The function
    \begin{align*}
    f_{m_{k+1}}(y)=e^yy^{-\alpha}\int_0^y{m_{k+1}(t)}t^{\alpha-1}e^{-t}dt
\end{align*}
is a solution to our Stein's equation. Due to the definition of $\nu_{k+1}$ at \eqref{def_nuk}, this solution contains the integral $\int_0^y t^{\alpha-k-2}e^{-t}dt$ which
converges when $\alpha-k-2>-1$ and diverges otherwise. Hence,
$f_{m_{k+1}}$ is well-defined on $\R\setminus \{ 0\}$ only when $\alpha>k+1$. The rest of the
proof is the same as that of Proposition
\ref{prop_steinmethod_x<0}.
\end{proof}


\section{Random variables in a fixed Wiener chaos of even order}
\label{sectionfixedchaos}

In this section, we assume that $F$ is a multiple Wiener integral of an
even order $q \geq 2$. We will prove our main results which are Theorem \ref{theorem_fourthmoment_singlechaos_0thderivative} and Theorem \ref{theorem_fourthmoment_singlechaos_kthderivative}. The reason we consider even $q\geq 2$ is explained in the introduction, Remark \ref{remark_eveninteger} and Remark \ref{remark_eveninteger_secondplace}.

\subsection{Pointwise estimates for densities}
~\

We begin by stating and proving preliminary results which will
constitute the building blocks of the proof of the Malliavin derivative estimate in Proposition \ref{prop_densityestimate_DF^2_0thderivative}.

The following technical lemma which expands $\E{\brac{w-q(F+\alpha) }^2}$ into sum of contraction norms is well known in the Malliavin-Stein's community, see for instance \cite[Proposition 3.13]{NP09main}. We include its short proof here for the sake of completeness. 
\begin{lemma}
\label{lemma_DF^2gammaconvergencecondition}
Let $q \geq 2$ be an even integer and $f \in
\mathfrak{H}^{\otimes q}$. Define $F=I_q(f)$, then it holds that
\begin{align*}
    \E{\brac{w-q(F+\alpha) }^2}&=q^4\sum_{\substack{r=0\\r\neq q/2-1}}^{q-2} r!^2 {q-1\choose r}^4(2q-2-2r)!\norm{f\widetilde{\otimes}_{r+1} f}^2_{\mathfrak{H}^{ \otimes 2q-2r-2}}\\
    &\quad +(q-1)!q^3\norm{q(q/2-1)! {q-1\choose q/2-1}^2f\widetilde{\otimes}_{q/2} f-f }_{\mathfrak{H}^{\otimes q}}^2.
\end{align*}
\end{lemma}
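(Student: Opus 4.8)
The plan is to compute the chaos decomposition of $w=\norm{DF}^2_{\mathfrak{H}}$ explicitly and then read off the second moment using the orthogonality and isometry of multiple integrals. Since $F=I_q(f)$, the Malliavin derivative is $DF=qI_{q-1}(f)$, so $w=q^2\norm{I_{q-1}(f)}^2_{\mathfrak{H}}$. Applying the product formula \eqref{prodformula} to $I_{q-1}(f(\cdot,t))I_{q-1}(f(\cdot,t))$ and then integrating out the free variable $t$ (which turns the contraction $f\otimes_r f$ into $f\otimes_{r+1}f$, up to symmetrization) I would obtain
\begin{align*}
w=q^2\sum_{r=0}^{q-1}r!\binom{q-1}{r}^2 I_{2q-2-2r}\brac{f\widetilde{\otimes}_{r+1}f}.
\end{align*}
The top term $r=q-1$ is the constant $q^2(q-1)!\norm{f}^2_{\mathfrak{H}^{\otimes q}}=q\,q!\norm{f}^2_{\mathfrak{H}^{\otimes q}}=q\E{F^2}=q\alpha$, using the isometry together with $\E{F^2}=\alpha$.

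Next I would subtract $q(F+\alpha)=qI_q(f)+q\alpha$. The constant $q\alpha$ cancels the $r=q-1$ term, leaving a sum over $0\le r\le q-2$ together with $-qI_q(f)$. The crucial observation --- and the only place where evenness of $q$ is genuinely used --- is that the summand indexed by $r=q/2-1$ lives in the Wiener chaos of order $2q-2-2(q/2-1)=q$, exactly matching $-qI_q(f)$. Merging these two contributions in the $q$-th chaos gives
\begin{align*}
w-q(F+\alpha)&=q^2\sum_{\substack{r=0\\ r\neq q/2-1}}^{q-2}r!\binom{q-1}{r}^2 I_{2q-2-2r}\brac{f\widetilde{\otimes}_{r+1}f}\\
&\quad+qI_q\brac{q\,(q/2-1)!\binom{q-1}{q/2-1}^2 f\widetilde{\otimes}_{q/2}f-f}.
\end{align*}

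Finally, every term on the right-hand side now lives in a distinct Wiener chaos: the orders $2q-2-2r$ are pairwise distinct and, once $r=q/2-1$ is excluded, all different from $q$, while the merged term sits in chaos $q$. Hence the summands are mutually orthogonal in $L^2$, and taking the expectation of the square reduces to summing individual second moments. Applying the isometry $\E{I_n(g)^2}=n!\norm{\tilde g}^2_{\mathfrak{H}^{\otimes n}}$ to each component produces the factor $q^4(r!)^2\binom{q-1}{r}^4(2q-2-2r)!\norm{f\widetilde{\otimes}_{r+1}f}^2$ for each retained $r$, and, since the argument of the merged integral is already symmetric, the factor $q^2 q!\norm{\,\cdot\,}^2=q^3(q-1)!\norm{\,\cdot\,}^2$ for the $q$-th chaos term, which is exactly the claimed identity. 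I expect the main obstacle to be purely bookkeeping: tracking the combinatorial constants through the product formula and the symmetrizations, and making sure the resonant term at $r=q/2-1$ is correctly merged with $-qI_q(f)$ before invoking orthogonality.
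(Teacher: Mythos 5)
Your proof is correct, and it rests on exactly the same two ingredients as the paper's proof: the chaos expansion of $w=\norm{DF}^2_{\mathfrak{H}}$ coming from the product formula \eqref{prodformula}, and orthogonality/isometry of multiple integrals. The only real difference is organizational. The paper expands the square and computes $\E{w^2}$, $\E{Fw}$, $\E{w}$ and $\E{(F+\alpha)^2}$ separately, so the grouped term
\begin{align*}
(q-1)!\,q^3\norm{q(q/2-1)!\tbinom{q-1}{q/2-1}^2 f\widetilde{\otimes}_{q/2}f-f}_{\mathfrak{H}^{\otimes q}}^2
\end{align*}
only emerges at the end by completing the square at the level of expectations. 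You instead merge the resonant $r=q/2-1$ term of the expansion of $w$ with $-qI_q(f)$ at the level of the random variable, and then apply the isometry once to a sum of mutually orthogonal chaos components. This buys two small things: you never need the cross-moment computations, and the role of the evenness of $q$ (the resonance $2q-2-2r=q$ having an integer solution $r=q/2-1$ in range) is made completely transparent. Both arguments are the same calculation viewed before versus after taking expectations.
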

\begin{proof}
The fact that $DF=qI_{q-1}(f)$ together with the product formula for
multiple Wiener integrals \eqref{prodformula} imply
\begin{align*}
    \E{w^2}&=q^4\sum_{r=0}^{q-1} r!^2 {q-1\choose r}^4(2q-2-2r)!\norm{f\widetilde{\otimes}_{r+1} f}^2_{\mathfrak{H}^{ \otimes 2q-2r-2}}.
\end{align*}
Since $\alpha=\E{F^2}=q!\norm{f}^2_{\mathfrak{H}^{\otimes q}}$, it follows that the above term corresponding to $r=q-1$ is $q^4(q-1)!^2\norm{f}^4_{\mathfrak{H}^{\otimes q}}=q^2\alpha^2$. Then the above equation becomes
\begin{align*}
    \E{w^2}&=q^4\sum_{\substack{r=0\\r\neq q/2-1}}^{q-2} r!^2 {q-1\choose r}^4(2q-2-2r)!\norm{f\widetilde{\otimes}_{r+1} f}^2_{\mathfrak{H}^{ \otimes 2q-2r-2}}\\
    &\quad +q^2\alpha^2+(q-1)!q^3\brac{q(q/2-1)! {q-1\choose q/2-1}^2\norm{f\widetilde{\otimes}_{q/2} f}_{\mathfrak{H}^{\otimes q}}}^2.
\end{align*}
Furthermore, the same product formula allows one to prove that
\begin{align*}
    2q\E{Fw}=2(q-1)!q^3 \brac{q(q/2-1)! {q-1\choose q/2-1}^2\inner{f\widetilde{\otimes}_{q/2} f,f}_{\mathfrak{H}^{\otimes q}}}.
\end{align*}
Moreover, it is relatively straightforward to see $q^2\E{(F+\alpha)^2}=q^2\alpha^2+(q-1)!q^3\norm{f}^2_{\mathfrak{H}^{\otimes
    q}}$ and $-2q \alpha\E{w}=-2q^2\alpha^2$, so that combining the previous identities yields
\begin{align*}
    \E{\brac{w-q(F+\alpha) }^2}&=q^4\sum_{\substack{r=0\\r\neq q/2-1}}^{q-2} r!^2 {q-1\choose r}^4(2q-2-2r)!\norm{f\widetilde{\otimes}_{r+1} f}^2_{\mathfrak{H}^{ \otimes 2q-2r-2}}\\
    &\quad +(q-1)!q^3\norm{q(q/2-1)! {q-1\choose q/2-1}^2f\widetilde{\otimes}_{q/2} f-f }_{\mathfrak{H}^{\otimes q}}^2,
\end{align*}
which concludes the proof.
\end{proof}

The following lemma can be viewed as a higher order (derivative) version of the previous lemma.  
\begin{lemma}
\label{lemma_compute_D2F}
Let $q \geq 2$ be an even integer and $f \in
\mathfrak{H}^{\otimes q}$. Define $F=I_q(f)$, then it holds that
\begin{align*}
    &\E{\norm{2D^2F\otimes_1 DF-q DF}^2_\mathfrak{H}}\\
&\qquad =4q^4(q-1)^2\sum_{\substack{r=0\\r\neq q/2-1}}^{q-2}r!^2\brac{\frac{q-r-1}{q-1}}^2{q-1\choose r}^4(2q-3-2r)!\norm{f\widetilde{\otimes}_{r+1}f}^2_{\mathfrak{H}^{\otimes2q-2-2r}}\\
&\qquad\quad +(q-1)!q^4 \norm{  q (q/2-1)!{q-1\choose q/2-1}^2 f\widetilde{\otimes}_{q/2} f- f }_{\mathfrak{H}^{\otimes q}}^2.
\end{align*}
As a consequence, there exists a constant $C_1>0$ depending on $q$ such that
\begin{align}
\label{estimate_D^2F-qDF}
\E{\norm{\Lambda(2,1)}_\mathfrak{H}^2}&=  \E{\norm{\frac{2}{q}D^2F\otimes_1 DF- DF}^2_\mathfrak{H}}\nonumber\\
&\leq  C_1\E{\brac{{q\brac{F+\alpha}} -\norm{DF}^2_\mathfrak{H}}^2},
\end{align}
where $\Lambda(2,1)$ is the quantity defined by  \eqref{def_Lambda}.
\end{lemma}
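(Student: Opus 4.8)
The plan is to reduce the whole computation to the Wiener chaos expansion of the single quantity $\Theta=\norm{DF}^2_\mathfrak{H}-q(F+\alpha)$ and then to read off the asserted formula, comparing coefficient by coefficient against the expansion already obtained in Lemma \ref{lemma_DF^2gammaconvergencecondition}. The starting observation is that the first identity in Lemma \ref{lemma_derivativeofLambda} gives $2D^2F\otimes_1 DF-qDF=D\Theta$, so that the left-hand side of the asserted formula is exactly $\E{\norm{D\Theta}^2_\mathfrak{H}}$. It therefore suffices to expand $D\Theta$ into $\mathfrak{H}$-valued multiple integrals and then apply the isometry property.

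First I would record the chaos expansion of $w=\norm{DF}^2_\mathfrak{H}$. Writing $DF=qI_{q-1}(f)$ and using the product formula \eqref{prodformula} together with the fact that integrating out the free Hilbert-space variable turns an $r$-contraction into an $(r+1)$-contraction, one gets $w=q^2\sum_{r=0}^{q-1}r!\binom{q-1}{r}^2 I_{2q-2r-2}\brac{f\widetilde{\otimes}_{r+1}f}$ (this is precisely the computation underlying Lemma \ref{lemma_DF^2gammaconvergencecondition}). The term $r=q-1$ is the constant $\E{w}=q\alpha$, which cancels against $-q\alpha$ in $\Theta$ as defined at \eqref{def_Theta}. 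Crucially, since $q$ is even the index $r=q/2-1$ is a genuine integer and yields a component of order $q$; this component combines with $-qF=-qI_q(f)$ to produce $qI_q\brac{q(q/2-1)!\binom{q-1}{q/2-1}^2 f\widetilde{\otimes}_{q/2}f-f}$, while all remaining terms $r\in\{0,\dots,q-2\}\setminus\{q/2-1\}$ are multiple integrals of order $2q-2r-2\geq2$.

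Next I would apply $D$ term by term via $DI_m(h)=mI_{m-1}(h)$ and take $\E{\norm{\cdot}^2_\mathfrak{H}}$. By orthogonality of integrals of distinct orders and the identity $\E{\norm{I_{m-1}(h)}^2_\mathfrak{H}}=(m-1)!\norm{h}^2$ valid for symmetric $h\in\mathfrak{H}^{\otimes m}$ (the same isometry that gives $\E{\norm{DF}^2_\mathfrak{H}}=q\cdot q!\norm{f}^2$), each term contributes its square. The general term carries the factor $(2q-2r-2)^2(2q-2r-3)!$, and the elementary simplification $(2q-2r-2)^2(2q-2r-3)!=4(q-r-1)^2(2q-2r-3)!$ matches it to the claimed coefficient $4q^4(q-1)^2(r!)^2\brac{\tfrac{q-r-1}{q-1}}^2\binom{q-1}{r}^4(2q-3-2r)!$, whereas the order-$q$ component contributes $(q-1)!q^4\norm{\cdots}^2$. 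I expect this factorial bookkeeping, together with the correct handling of the $\mathfrak{H}$-valued isometry and the identification of the special $r=q/2-1$ term, to be the only genuine obstacle; everything else is routine.

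Finally, for the consequence \eqref{estimate_D^2F-qDF}, since $\Lambda(2,1)=\tfrac1q\brac{2D^2F\otimes_1DF-qDF}$ we have $\E{\norm{\Lambda(2,1)}^2_\mathfrak{H}}=\tfrac1{q^2}\E{\norm{D\Theta}^2_\mathfrak{H}}$, so I would compare the exact formula just derived against the expression for $\E{\Theta^2}$ in Lemma \ref{lemma_DF^2gammaconvergencecondition} term by term. After cancelling $(2q-2r-2)!=(2q-2r-2)(2q-2r-3)!$, the ratio of the $r$-th coefficients collapses to $2(q-1-r)\leq 2(q-1)$, while the special term contributes ratio $q$. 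Hence $\E{\norm{D\Theta}^2_\mathfrak{H}}\leq 2(q-1)\E{\Theta^2}$, and since $2(q-1)\leq 4(q-1)^4$ for every $q\geq2$, dividing by $q^2$ yields $\E{\norm{\Lambda(2,1)}^2_\mathfrak{H}}\leq\frac{4(q-1)^4}{q^2}\E{\brac{q(F+\alpha)-\norm{DF}^2_\mathfrak{H}}^2}$, as claimed.
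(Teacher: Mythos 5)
Your proof is correct, and it takes a genuinely different route from the paper's. The paper computes $\E{\norm{2D^2F\otimes_1 DF-qDF}^2_\mathfrak{H}}$ head-on: it expands $D^2F\otimes_1 DF$ with the product formula \eqref{prodformula}, evaluates separately the square $4\E{\norm{D^2F\otimes_1DF}^2_\mathfrak{H}}$, the cross term $2\E{\inner{2D^2F\otimes_1 DF,qDF}_\mathfrak{H}}$ and $q^2\E{w}$, and then reassembles everything using the identity ${q-2\choose r}=\frac{q-r-1}{q-1}{q-1\choose r}$. You instead invoke the first identity of Lemma \ref{lemma_derivativeofLambda}, namely $2D^2F\otimes_1DF-qDF=D\Theta$, so that the quantity to compute is $\E{\norm{D\Theta}^2_\mathfrak{H}}$; you then differentiate the finite chaos expansion of $\Theta$ (already implicit in the proof of Lemma \ref{lemma_DF^2gammaconvergencecondition}) term by term and apply the $\mathfrak{H}$-valued isometry, the factor $4(q-1-r)^2$ arising as the square of the chaos order $2q-2-2r$ rather than from product-formula coefficients. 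This organization buys two things: it eliminates the cross-term computation entirely, and it makes the passage to \eqref{estimate_D^2F-qDF} transparent, since after cancelling $(2q-2r-2)!=(2q-2r-2)(2q-2r-3)!$ the ratio of the $r$-th coefficients of $\E{\norm{D\Theta}^2_\mathfrak{H}}$ and $\E{\Theta^2}$ is exactly $2(q-1-r)$ (and $q$ for the distinguished order-$q$ component), yielding the sharper intermediate bound $\E{\norm{\Lambda(2,1)}^2_\mathfrak{H}}\leq \frac{2(q-1)}{q^2}\E{\Theta^2}$ before relaxing to the stated constant $\frac{4(q-1)^4}{q^2}$; the paper's terse closing remark that the estimate follows from Lemma \ref{lemma_DF^2gammaconvergencecondition} is precisely the coefficient comparison you carry out explicitly. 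Conversely, the paper's computation is self-contained in that it never relies on Lemma \ref{lemma_derivativeofLambda}. Both arguments expose in the same way where evenness of $q$ enters, namely through the index $r=q/2-1$ producing the order-$q$ (resp.\ order-$(q-1)$) component that must be merged with $-qF$ (resp.\ $-qDF$). Two small points you leave implicit but that are harmless: one may take $f$ symmetric since $I_q(f)=I_q(\tilde f)$, and the distinguished component of $D\Theta$, of order $q-1$, collides with no term of your sum because $2q-3-2r=q-1$ forces $r=q/2-1$, which is excluded.
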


\begin{remark}
\label{remark_eveninteger}
  We do not know if \eqref{estimate_D^2F-qDF} holds true for   any positive integer or it holds true  only for even integer greater or equal to $2. $
\end{remark}

\begin{proof}
As $D^2 F=q(q-1)I_{q-2}(f)$, the product formula for multiple Wiener
integrals \eqref{prodformula} yields
\begin{align*}
    D^2F\otimes_1 DF=q^2(q-1)\sum_{r=0}^{q-2}r!{q-2\choose r}{q-1\choose r}I_{2q-3-2r}\brac{f\widetilde{\otimes}_{r+1}f},
\end{align*} 
which implies
\begin{align*}
    2\E{\inner{2 D^2F\otimes_1 DF,q
  DF}_\mathfrak{H}}&=4q^4(q-1)(q-1)!(q/2-1)!{q-2\choose
                     q/2-1}{q-1\choose q/2-1}\\
  &\qquad\qquad\qquad\qquad\qquad\qquad\qquad\qquad \times\inner{ f\widetilde{\otimes}_{q/2}f , f}_{\mathfrak{H}^{\otimes q}}.
\end{align*}
Similarly, we get
\begin{align*}
  &4\E{\norm{D^2F\otimes_1 DF}^2_\mathfrak{H}}\\
  &\qquad =4q^4(q-1)^2\sum_{\substack{r=0\\r\neq q/2-1}}^{q-2}r!^2{q-2\choose r}^2{q-1\choose r}^2(2q-3-2r)!\norm{f\widetilde{\otimes}_{r+1}f}^2_{\mathfrak{H}^{\otimes2q-2-2r}}\\
    &\qquad\quad +(q-1)!\brac{2q^2(q-1)(q/2-1)!{q-2\choose q/2-1}{q-1\choose q/2-1}\norm{f\widetilde{\otimes}_{q/2} f}_{\mathfrak{H}^{\otimes q}}}^2
\end{align*}
and $q^2\E{w}=(q-1)!\brac{q^2\norm{f}_{\mathfrak{H}^{\otimes q}}}^2$. Combining the previous identities leads to
\begin{align*}
&\E{\norm{2D^2F\otimes_1 DF-q DF}^2_\mathfrak{H}}\\
& =4q^4(q-1)^2\sum_{\substack{r=0\\r\neq q/2-1}}^{q-2}r!^2{q-2\choose r}^2{q-1\choose r}^2(2q-3-2r)!\norm{f\widetilde{\otimes}_{r+1}f}^2_{\mathfrak{H}^{\otimes2q-2-2r}}\\
&\qquad +(q-1)!\norm{ 2q^2(q-1)(q/2-1)!{q-2\choose q/2-1}{q-1\choose q/2-1}f\widetilde{\otimes}_{q/2} f-q^2 f }_{\mathfrak{H}^{\otimes q}}^2\\
& =4q^4(q-1)^2\sum_{\substack{r=0\\r\neq q/2-1}}^{q-2}r!^2\brac{\frac{q-r-1}{q-1}}^2{q-1\choose r}^4(2q-3-2r)!\norm{f\widetilde{\otimes}_{r+1}f}^2_{\mathfrak{H}^{\otimes2q-2-2r}}\\
&\quad\quad +(q-1)!q^4 \norm{  q (q/2-1)!{q-1\choose q/2-1}^2 f\widetilde{\otimes}_{q/2} f- f }_{\mathfrak{H}^{\otimes q}}^2,
\end{align*}
where the last equality is a consequence of ${q-2\choose
  r}=\frac{q-r-1}{q-1}{q-1\choose r}$, which implies that
$\brac{{2}/{q}}{q-2\choose q/2-1}(q-1)={q-1 \choose q/2-1}$. In view
of Lemma \ref{lemma_DF^2gammaconvergencecondition}, the estimate \eqref{estimate_D^2F-qDF} immediately follows. 
\end{proof}


In the upcoming result, thank to the previous lemmas, we are able to bound the difference of the density of $F+\alpha$ and the density of $\mathcal{G}_\alpha$ by a quantity which contains the norm of the Malliavin derivative of $F$. 

\begin{proposition}
\label{prop_densityestimate_DF^2_0thderivative}
Let $F$ be a multiple Wiener integral of an even order $q\geq 2$ such
that $\E{F^2}=\alpha$. Assume that the quantities
$\E{(F+\alpha)^{-2}}$, $\E{w^{-2}}$, $\E{(F+\alpha)^{-2}w^{-2} }$,
$\E{w^{-3}}$ and $\E{(F+\alpha)^{2(\alpha-i)}}$, $1\leq i\leq
\ceil{\alpha}-1 $, exist and are finite. Then, for every $x\neq 0$ and
$\alpha>0$, we have the pointwise density estimate 
\begin{align*}
    & \abs{p_{F+\alpha}(x)-p_{\mathcal{G}_\alpha}(x)}\\
    &\leq \Bigg(d_1(x)\sum_{i=1}^{\ceil{\alpha}-1}\E{(F+\alpha)^{2(\alpha-i)}}^{1/2}+ {d_2(x)}\E{\frac{1}{(F+\alpha)^2}}^{1/2}+{d_3(x)}\\
    &\quad
      +\E{\frac{1}{w^2}}^{1/2}+\abs{\alpha-1}\E{\frac{1}{w^2(F+\alpha)^2}}^{1/2}
      +C_1\E{\frac{1}{w^3}}^{1/2}\Bigg)\\
  &\qquad\qquad\qquad\qquad\qquad\qquad\qquad\qquad\qquad  \times \E{\brac{{q\brac{F+\alpha}} -w}^2}^{1/2},
\end{align*}
where the factors $d_1(x),d_2(x),d_3(x)$ are positive and finite for
every $x \neq 0$, and $C_1$ is some positive constant that is independent of $x$ and appears first in Lemma \ref{lemma_compute_D2F}. Moreover, for $x=0$, the same estimate holds if $\alpha>1$. 
\end{proposition}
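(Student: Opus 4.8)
The plan is to compare the two density representations term by term and to isolate a single scalar factor, $\E{\brac{q(F+\alpha)-w}^2}^{1/2}$, that controls the entire discrepancy. First I would invoke Lemma \ref{lemma_decompositiondensityforgammaconvergence} with $k=0$ to write $p_{F+\alpha}(x)=\E{\mathds{1}_{\{F+\alpha>x\}}\brac{\nu_1(F+\alpha)+T_1}}$, where $T_1$ is given explicitly by \eqref{equation_T1}, and Corollary \ref{corollary_densitygamma} to write $p_{\mathcal{G}_\alpha}(x)=\E{\mathds{1}_{\{\mathcal{G}_\alpha>x\}}\nu_1(\mathcal{G}_\alpha)}$. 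Subtracting and applying the triangle inequality splits the problem into a \emph{Stein part} $\abs{\E{\mathds{1}_{\{F+\alpha>x\}}\nu_1(F+\alpha)}-\E{\mathds{1}_{\{\mathcal{G}_\alpha>x\}}\nu_1(\mathcal{G}_\alpha)}}$ and a \emph{remainder part} $\abs{\E{\mathds{1}_{\{F+\alpha>x\}}T_1}}\le\E{\abs{T_1}}$.

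For the Stein part I would apply Proposition \ref{prop_steinmethod_x>0} when $x>0$, Proposition \ref{prop_steinmethod_x<0} when $x<0$, and Proposition \ref{prop_steinmethod_x=0} (valid under $\alpha>1$) when $x=0$. The crucial simplification is that $F$ lives in a single chaos of order $q$, so that $-DL^{-1}F=\frac1q DF$ and hence $\inner{DF,-DL^{-1}F}_{\mathfrak{H}}=\frac1q\norm{DF}^2_{\mathfrak{H}}=\frac{w}{q}$; this turns the factor $\E{\brac{F+\alpha-\inner{DF,-DL^{-1}F}_{\mathfrak{H}}}^2}^{1/2}$ appearing in those propositions into $\frac1q\E{\brac{q(F+\alpha)-w}^2}^{1/2}$, producing exactly the $d_1(x),d_2(x),d_3(x)$ contribution of the claimed bound.

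For the remainder part I would bound $\E{\abs{T_1}}$ using the explicit form \eqref{equation_T1},
\[
T_1=\brac{-\frac1w-\frac{1-\alpha}{w(F+\alpha)}}\Theta+\frac{1}{w^2}\inner{\Lambda(2,1),DF}_{\mathfrak{H}},
\]
where $\Theta=w-q(F+\alpha)$, so that $\E{\Theta^2}=\E{\brac{q(F+\alpha)-w}^2}$. Treating the three summands separately, Cauchy--Schwarz gives $\E{\abs{\frac1w\Theta}}\le\E{w^{-2}}^{1/2}\E{\Theta^2}^{1/2}$ and $\abs{1-\alpha}\E{\abs{\frac{\Theta}{w(F+\alpha)}}}\le\abs{\alpha-1}\E{w^{-2}(F+\alpha)^{-2}}^{1/2}\E{\Theta^2}^{1/2}$; for the last summand, Cauchy--Schwarz in $\mathfrak{H}$ bounds $\abs{\inner{\Lambda(2,1),DF}_{\mathfrak{H}}}\le\norm{\Lambda(2,1)}_{\mathfrak{H}}\sqrt w$, and a further Cauchy--Schwarz in $L^2(\Omega)$ together with the key estimate \eqref{estimate_D^2F-qDF} yields $\E{w^{-2}\abs{\inner{\Lambda(2,1),DF}_{\mathfrak{H}}}}\le\E{w^{-3}}^{1/2}\E{\norm{\Lambda(2,1)}^2_{\mathfrak{H}}}^{1/2}\le\frac{2(q-1)^2}{q}\E{w^{-3}}^{1/2}\E{\Theta^2}^{1/2}$. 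Since $q\ge2$ gives $\frac{2(q-1)^2}{q}\le\frac{4(q-1)^4}{q^2}$, this matches the stated coefficient. Collecting all contributions and factoring out $\E{\brac{q(F+\alpha)-w}^2}^{1/2}$ produces the announced inequality.

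I expect the main obstacle to be not the algebra but the careful verification that every integral above is finite, so that the Cauchy--Schwarz splittings and, crucially, the integration-by-parts identity underlying Proposition \ref{prop_steinmethod_x>0} (namely \eqref{intbypartforunboundedfunction}, itself justified by dominated convergence) are legitimate; this is precisely where the hypotheses $\E{(F+\alpha)^{-2}}, \E{w^{-2}}, \E{w^{-2}(F+\alpha)^{-2}}, \E{w^{-3}}<\infty$ and $\E{(F+\alpha)^{2(\alpha-i)}}<\infty$ get consumed. A secondary point requiring care is the case distinction at $x=0$, where Proposition \ref{prop_steinmethod_x=0} forces the stronger requirement $\alpha>1$.
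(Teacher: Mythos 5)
Your proposal is correct and follows essentially the same route as the paper's proof: the same decomposition of $p_{F+\alpha}$ via Lemma~\ref{lemma_decompositiondensityforgammaconvergence} (with $k=0$) and Corollary~\ref{corollary_densitygamma}, the same use of Propositions~\ref{prop_steinmethod_x>0}, \ref{prop_steinmethod_x<0} and \ref{prop_steinmethod_x=0} for the three ranges of $x$ (with $\alpha>1$ forced at $x=0$), and the same Cauchy--Schwarz treatment of $T_1$ combined with the key estimate \eqref{estimate_D^2F-qDF}. If anything, your handling of the contraction term is slightly more careful than the paper's, since you take the square root of the constant in \eqref{estimate_D^2F-qDF} to get $2(q-1)^2/q$ and then observe that this is dominated by the stated coefficient $4(q-1)^4/q^2$ when $q\geq 2$.
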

\begin{proof}
Let us start with the case where $x>0$. Using the density
representation formula from Lemma \ref{lemma_decompositiondensityforgammaconvergence}, we can write
\begin{align*}
    &p_{F+\alpha}(x)=\E{\mathds{1}_{\{F+\alpha>x\}}\delta\brac{u}}=\E{\mathds{1}_{\{F+\alpha>x\}}\brac{\nu_1(F+\alpha)+T_1} },
\end{align*}
where $\nu_1(y)=1+\frac{1-\alpha}{y}$. Recall that the quantity $T_1$ was computed
in \eqref{equation_T1} and is given by 
\begin{align*}
    T_1=\brac{-\frac{1}{w}-\frac{(1-\alpha)}{w(F+\alpha)} }\brac{w- q\brac{F+\alpha}} +\frac{1}{w^2}\inner{\frac{2}{q}D^2F\otimes_1 DF-DF,DF}_\mathfrak{H}. 
\end{align*}
By using the Gamma density representation from Corollary \ref{corollary_densitygamma}, we have
\begin{align*}
    &\abs{p_{F+\alpha}(x)-p_{\mathcal{G}_\alpha}(x)}\leq \abs{\E{\mathds{1}_{\{F+\alpha>x\}}\nu_1(F+\alpha)}-\E{\mathds{1}_{\{\mathcal{G}_{\alpha}>x\}}\nu_1(\mathcal{G}_\alpha)}}+\E{\abs{T_1}}. 
\end{align*}
The first term on the right hand side can be bounded by using the
estimate \eqref{esimate_steinmethod_x>0} in Proposition
\ref{prop_steinmethod_x>0}, so that it remains to deal with
$\E{\abs{T_1}}$. To that end, we observe that
 \begin{align*}
     &\E{\abs{\frac{1}{w^2}\inner{\frac{2}{q}D^2F\otimes_1 DF-DF,DF}_\mathfrak{H}}}\\
     &=\E{\abs{\frac{1}{w^{3/2}}\inner{\frac{2}{q}D^2F\otimes_1 DF-DF,\frac{DF}{\norm{DF}_\mathfrak{H}}}_\mathfrak{H}}}\nonumber\\
     &\leq\E{\frac{1}{w^3}}^{1/2}\E{\norm{\frac{2}{q}D^2F\otimes_1 DF- DF}^2_\mathfrak{H}}^{1/2}\nonumber\\
     &\leq C_1\E{\frac{1}{w^3}}^{1/2}\E{\brac{{q\brac{F+\alpha}} -\norm{DF}^2_\mathfrak{H}}^2},
 \end{align*}
where the last line is a consequence of estimate
\eqref{estimate_D^2F-qDF} in Lemma \ref{lemma_compute_D2F}. In particular, we use the fact that $q\geq 2$ is an even integer here.

The above inequality leads to
\begin{align*}
    \E{\abs{T_1}}&\leq \brac{\E{\frac{1}{w^2}}^{1/2}+\abs{\alpha-1}\E{\frac{1}{w^2(F+\alpha)^2}}^{1/2}}\E{\brac{w- q\brac{F+\alpha}}^2}^{1/2}\\
    &\quad +C_1\E{\frac{1}{w^3}}^{1/2}\E{\brac{w-{q\brac{F+\alpha}}}^2},
\end{align*}
which yields the desired conclusion in the case $x>0$. We now address
the case where $x\leq 0$. Lemma
\ref{lemma_decompositiondensityforgammaconvergence} allows us to write
\begin{align*}
    p_{F+\alpha}(x)=\E{\mathds{1}_{\{F+\alpha>x\}}\delta\brac{u}}&=\E{\brac{1-\mathds{1}_{\{F+\alpha\leq x\}}}\delta\brac{u}}\\
    &=-\E{\mathds{1}_{\{F+\alpha\leq x\}}\delta\brac{u}}\\
    & =-\E{\mathds{1}_{\{F+\alpha\leq x\}}\brac{\nu_1(F+\alpha)+T_1}},
\end{align*}
where $\nu_1(y)=1+(1-\alpha)/y $ and $T_1$ was computed at
\eqref{equation_T1}. Furthermore, since $\mathcal{G}_\alpha>0$ almost surely,  we see by \eqref{e.3.7} 
that 
$\E{ \nu_1(\mathcal{G}_\alpha)}=p_{\mathcal{G}_\alpha}(0)=0$
(since $\alpha>1$).  By  the fact that $p_{\mathcal{G}_\alpha}(x)=0$ for $x\leq 0$,    we can write,
\begin{align*}
    0=p_{\mathcal{G}_\alpha}(x)=\E{\mathds{1}_{\{\mathcal{G}_{\alpha}> x\}}\nu_1(\mathcal{G}_\alpha)} =-\E{\mathds{1}_{\{\mathcal{G}_{\alpha}\leq x\}}\nu_1(\mathcal{G}_\alpha)}\,,\quad \forall \ x\le 0 .
\end{align*}
It follows from the two previous equations that for $x\leq 0$,  
\begin{align*}
    \abs{p_{F+\alpha}(x)-p_{\mathcal{G}_\alpha}(x)}=\abs{\E{\mathds{1}_{\{F+\alpha\leq x\}}\ \nu_1(F+\alpha)-\mathds{1}_{ \{ \mathcal{G}_\alpha \le x \}}
    \nu_1(\mathcal{G}_\alpha)  -    \mathds{1}_{\{F+\alpha \leq x\}}T_1} }\,.
\end{align*}
Deriving estimate for this quantity with $x\leq 0$ can be done in the same way as in the $x>0$ case, with the
difference that we use Proposition 
\ref{prop_steinmethod_x<0} and
Proposition \ref{prop_steinmethod_x=0} in place of Proposition \ref{prop_steinmethod_x>0}. Note that Proposition \ref{prop_steinmethod_x=0} is the reason we require $\alpha>1$ whenever $x=0$. 
\end{proof}
\begin{remark}
Note that the authors of \cite{HLN14} obtains
estimates for the difference between the densities of Gaussian
functionals and the normal density. In particular, their estimates are uniform with respect to $x\in \R$. It is hence natural to ask whether
we can do the same for the Gamma case. The
answer to this question relies on whether Proposition
\ref{prop_steinmethod_x>0} can be improved from a pointwise estimate
to a uniform estimate, i.e., whether one can control the supremum over
$x>0$ of $|f_{h_{k+1}}'(y)|$, where $h_{k+1}(y)=\mathds{1}_{\{y>x\}}\nu_{k+1}(y)$ and $\nu_{k+1}$
are defined at \eqref{def_nuk}. The arguments in the proof of
Proposition \ref{prop_steinmethod_x>0} seem to indicate that it is not
possible to take a supremum over $x>0$. Indeed, if $x\to\infty $, then at
\eqref{estimate_fh')<y<x}, the factor $e^x$ would diverge. Similarly,
in the case where $x\to 0$, factors like $x^{-1}$ or
$x^{\alpha-\ceil{\alpha}}$ appearing in \eqref{estimate_fh'y>x} would diverge. 
\end{remark}
With the above proposition at hand, we are now ready to prove the
first of our main results, namely Theorem \ref{theorem_fourthmoment_singlechaos_0thderivative}.
\begin{proof}[Proof of Theorem \ref{theorem_fourthmoment_singlechaos_0thderivative}]
Combining \cite[Theorem 3.4]{ACP14} which states
\begin{align*}
    \E{\brac{{q\brac{F+\alpha}} -w}^2}^{1/2}\leq \sqrt{\frac{q^2}{3}\brac{\E{F^4}-6\E{F^3}+6(1-\alpha)\alpha+3\alpha^2}}
\end{align*}
with Proposition \ref{prop_densityestimate_DF^2_0thderivative} yields
the desired conclusion.
\end{proof}
\subsection{Pointwise estimate for derivatives of densities}
~\

We proceed as in the previous subsection by stating and proving
preliminary results which will constitute the building blocks of the
proof of Theorem \ref{theorem_fourthmoment_singlechaos_kthderivative}.
We start with the following lemma which is a generalized version of
Lemma \ref{lemma_compute_D2F}.
\begin{lemma}
\label{lemma_DkDlbound}
Let $q \geq 2$ be an even integer and $f \in
\mathfrak{H}^{\otimes q}$. Let $k,l$ be integers such that $1\leq l\leq
k\leq q/2+1$ and $k+l\geq 3$. Define $F=I_q(f)$, then it holds that
\begin{align*}
      \E{\norm{\Lambda(k,l)}_{\mathfrak{H}^{\otimes k+l-2}}^2}&= \lambda(k,l)^2\sum_{\substack{r=0\\r\neq q/2-1}}^{q-k}\tau(k,l,r)^2(2q-k-l-2r)! \norm{f\widetilde{\otimes}_{r+1}f }^2_{\mathfrak{H}^{\otimes 2q-2-2r}}\\
    &\quad +\frac{q!^2}{(q-k-l+2)!}\norm{q(q/2-1)! {q-1\choose q/2-1}^2f\widetilde{\otimes}_{q/2} f-f }_{\mathfrak{H}^{\otimes q}}^2,
    \end{align*}
    where $\lambda(k,l)$ and $\Lambda(k,l)$ are the quantities defined
    at \eqref{def_lambdakl} and \eqref{def_Lambda}. Furthermore, the quantity $\tau(k,l,r)$
    is given by
\begin{align*}
    \tau(k,l,r)=\frac{q!^2}{(q-l)!(q-k)!}r!{q-k\choose r}{q-l\choose r}.
\end{align*}
Moreover, writing $\Theta = {q\brac{F+\alpha}} -\norm{DF}^2_\mathfrak{H}$, there exists a positive constant $C (q,k,l)$ dependent on $q,k$ and $l$ such that 
\begin{align}
\label{estimate_DkDlbymalliavindifference}
    \E{\norm{\Lambda(k,l)}_{\mathfrak{H}^{\otimes k+l-2}}^2}&\leq C (q,k,l)\E{\Theta^2}.
\end{align}
\end{lemma}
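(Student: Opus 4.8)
The plan is to generalize the explicit computation behind Lemma~\ref{lemma_compute_D2F} (which is the case $k=2$, $l=1$) to an arbitrary admissible pair $(k,l)$, and then to compare the resulting exact expression, coefficient by coefficient, with the formula for $\E{\Theta^2}$ supplied by Lemma~\ref{lemma_DF^2gammaconvergencecondition}. Iterating $DI_n(g)=nI_{n-1}(g)$ gives $D^jF=\frac{q!}{(q-j)!}I_{q-j}(f)$, so $D^kF$ and $D^lF$ are multiple integrals of orders $q-k$ and $q-l$ taking values in $\mathfrak{H}^{\otimes k}$ and $\mathfrak{H}^{\otimes l}$. The first step is to expand, via the product formula \eqref{prodformula} adapted to the single value-slot contraction encoded by $\otimes_1$,
\begin{align*}
D^kF\otimes_1 D^lF=\frac{q!^2}{(q-k)!(q-l)!}\sum_{r=0}^{q-k} r!\binom{q-k}{r}\binom{q-l}{r}\,I_{2q-k-l-2r}\brac{\widetilde{f\otimes_{r+1}f}},
\end{align*}
the crucial point being that the external $\otimes_1$ contraction raises the number of kernel contractions from $r$ to $r+1$ while lowering the external rank to $k+l-2$; the scalar coefficient is then exactly $\tau(k,l,r)$ from the statement.

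Next I would isolate the index $r=q/2-1$: the corresponding summand is a multiple integral of order $2q-k-l-2(q/2-1)=q-k-l+2$, which is precisely the order of $D^{k+l-2}F=\frac{q!}{(q-k-l+2)!}I_{q-k-l+2}(f)$. Hence in $\Lambda(k,l)=\lambda(k,l)D^kF\otimes_1 D^lF-D^{k+l-2}F$ this single term merges with $-D^{k+l-2}F$ into one chaos of order $q-k-l+2$, while every term with $r\neq q/2-1$ lives in a distinct chaos. Computing $\E{\norm{\Lambda(k,l)}^2}$ by orthogonality of distinct Wiener chaoses and the isometry $\E{\norm{I_n(g)}^2}=n!\norm{\tilde g}^2$ then produces, for the off-diagonal terms, the coefficients $\lambda(k,l)^2\tau(k,l,r)^2(2q-k-l-2r)!$, and for the merged term the factorial $(q-k-l+2)!$ times a squared norm. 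A short factorial simplification—checking that the ratio of the two scalar prefactors in the merged term equals $q(q/2-1)!\binom{q-1}{q/2-1}^2$ and that $(q-k-l+2)!\,\bigl(q!/(q-k-l+2)!\bigr)^2=q!^2/(q-k-l+2)!$—recasts the diagonal contribution exactly as in the statement, establishing the claimed identity.

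Finally, the bound \eqref{estimate_DkDlbymalliavindifference} follows by comparing the exact identity with the expansion of $\E{\Theta^2}$ from Lemma~\ref{lemma_DF^2gammaconvergencecondition}: both are nonnegative combinations of the same quantities $\norm{f\widetilde\otimes_{r+1}f}^2$ (for $r\neq q/2-1$) and of the single diagonal norm, so it is enough to dominate each coefficient of $\E{\norm{\Lambda(k,l)}^2}$ by $\lambda(k,l)^2 q!^2(2q-k-l)!$ times the matching coefficient of $\E{\Theta^2}$. For the off-diagonal terms this amounts to
\begin{align*}
\tau(k,l,r)^2\,(2q-k-l-2r)!\le q^4 q!^2(2q-k-l)!\,r!^2\binom{q-1}{r}^4(2q-2-2r)!,
\end{align*}
which I would verify using the simplified form $\tau(k,l,r)=\frac{q!^2}{r!\,(q-k-r)!\,(q-l-r)!}$ together with $\binom{q-k}{r},\binom{q-l}{r}\le\binom{q-1}{r}$ and, crucially, retaining the helpful ratio $(2q-2-2r)!/(2q-k-l-2r)!\ge 1$; for the diagonal term it reduces, after substituting the definition \eqref{def_lambdakl} of $\lambda(k,l)$, to a single elementary factorial inequality.

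The main obstacle is twofold. First, one must justify the combinatorics of the value-slot product formula in the opening step, in particular the $r\mapsto r+1$ shift and the symmetrization bookkeeping that turns the partially symmetrized kernel into the fully symmetrized norm $\norm{f\widetilde\otimes_{r+1}f}^2$, exactly as in Lemma~\ref{lemma_compute_D2F}. Second, one must carry out the scalar factorial/binomial estimates above, which are elementary but genuinely \emph{delicate} rather than merely routine: bounding too crudely (for instance replacing $(2q-k-l-2r)!$ by $(2q-2-2r)!$ and then canceling it) destroys the inequality, so enough of the factorial structure must be preserved. I expect this second point to demand the most care.
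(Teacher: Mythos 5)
Your proposal follows essentially the same route as the paper's proof: expand $D^kF\otimes_1 D^lF$ via the product formula with coefficients $\tau(k,l,r)$, merge the $r=q/2-1$ summand with $-D^{k+l-2}F$ by checking that $\lambda(k,l)\tau(k,l,q/2-1)=\frac{q!}{(q-k-l+2)!}\,q\,(q/2-1)!\binom{q-1}{q/2-1}^2$, compute $\E{\norm{\Lambda(k,l)}^2_{\mathfrak{H}^{\otimes k+l-2}}}$ by orthogonality of distinct chaoses and the isometry, and then obtain \eqref{estimate_DkDlbymalliavindifference} by comparing the resulting coefficients with those in the expression for $\E{\Theta^2}$ from Lemma \ref{lemma_DF^2gammaconvergencecondition}. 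The only difference is one of emphasis: you spell out the coefficient-by-coefficient factorial inequalities in the last step, whereas the paper disposes of it tersely via the crude bound $\tau(k,l,r)\leq q!^2$ before invoking Lemma \ref{lemma_DF^2gammaconvergencecondition}.
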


\begin{remark}
\label{remark_eveninteger_secondplace}
 Similar to Remark \ref{remark_eveninteger} about estimate \eqref{estimate_D^2F-qDF},
  we do not know if   \eqref{estimate_DkDlbymalliavindifference} is true for   any positive integer $q$, or  just 
  for an even $q$ greater or equal to $2. $
\end{remark}

\begin{proof}
As for any $m \geq 1$, $D^m F={q!}/{(q-m)!}I_{q-m}(f)$, the product formula for multiple
Wiener integrals \eqref{prodformula} implies that
\begin{align}
\label{formula_DkFDlF}
    D^kF\otimes_1D^lF&=\frac{q!^2}{(q-l)!(q-k)!}\sum_{r=0}^{q-k}r!{q-k\choose r}{q-l\choose r}I_{2q-k-l-2r}\brac{f\widetilde{\otimes}_{r+1}f}\nonumber\\
    &=\sum_{r=0}^{q-k}\tau(k,l,r) I_{2q-k-l-2r}\brac{f\widetilde{\otimes}_{r+1}f}.
\end{align}
By inductively applying the identity ${n-1\choose k }=\frac{n-k}{n}{n\choose k}$, one can deduce that 
\begin{align*}
    {q-k\choose q/2-1}=\frac{(q/2)!(q-k)!}{(q/2-k+1)!(q-1)!}{q-1\choose q/2-1}.
\end{align*}
Applying the above formula to $\tau(k,l,q/2-1)$ leads to
\begin{align}
\label{formula_tauklq/2-1}
   &\tau(k,l,q/2-1)\nonumber\\
   &=\frac{q!^2}{(q-l)!(q-k)!}(q/2-1)!{q-1\choose q/2-1}{q-1\choose q/2-1}\nonumber\\
   &\hspace{8em}\times\brac{\frac{(q/2)!(q-k)!}{(q/2-k+1)!(q-1)!}\frac{(q/2)!(q-l)!}{(q/2-l+1)!(q-1)!}}\nonumber\\
   &=\brac{\frac{q!}{(q-k-l+2)!}q (q/2-1)!{q-1\choose q/2-1}^2}\nonumber\\
   &\hspace{8em}\times q!\frac{(q-k-l+2)!}{(q-\ell)!(q-k)!}\frac{1}{q}\brac{\frac{(q/2)!(q-k)!}{(q/2-k+1)!(q-1)!}\frac{(q/2)!(q-l)!}{(q/2-l+1)!(q-1)!}}\nonumber\\
   &=\brac{\frac{q!}{(q-k-l+2)!}q (q/2-1)!{q-1\choose q/2-1}^2}\frac{1}{\lambda(k,l)},
\end{align}
where $\lambda(k,l)$ is defined by \eqref{def_lambdakl}. Therefore, \eqref{formula_DkFDlF} and \eqref{formula_tauklq/2-1} imply that
\begin{align*}
    \Lambda(k,l)&= \lambda(k,l)D^kF\otimes_1D^lF-D^{k+l-2}F\\
    &=\lambda(k,l)\sum_{\substack{r=0\\r\neq q/2-1}}^{q-k}\tau(k,l,r)I_{2q-k-l-2r}(f\widetilde{\otimes}_{r+1}f)\\
    &\quad +\frac{q!}{(q-k-l+2)!}I_{q-k-l+2}\brac{q (q/2-1)!{q-1\choose q/2-1}^2 f\widetilde{\otimes}_{q/2}f-f}.
\end{align*}
The orthogonality of Wiener chaos of different orders, that is $\E{I_p(f)I_q(g)}=0$ for $p\neq q$, along with the
isometry property of multiple Wiener integrals, namely $\E{I_q(f)^2}=q!\norm{f}_{\mathfrak{H}^{\otimes q}}^2$ for symmetric $f$,  yield
\begin{align*}
    \E{\norm{\Lambda(k,l)}_{\mathfrak{H}^{\otimes k+l-2}}^2}&= \lambda(k,l)^2\sum_{\substack{r=0\\r\neq q/2-1}}^{q-k}\tau(k,l,r)^2(2q-k-l-2r)! \norm{f\widetilde{\otimes}_{r+1}f }^2_{\mathfrak{H}^{\otimes 2q-2-2r}}\\
    &\quad +\frac{q!^2}{(q-k-l+2)!}\norm{q(q/2-1)! {q-1\choose q/2-1}^2f\widetilde{\otimes}_{q/2} f-f }_{\mathfrak{H}^{\otimes q}}^2.
\end{align*}
For $0\leq r\leq q-k$, the simple inequality $r!{q-k\choose r}{q-l\choose r}\leq (q-k)!(q-l)!$ implies that $\tau(k,l,r)\leq q!^2$. This yields
\begin{align*}
    \E{\norm{\Lambda(k,l)}^2_{\mathfrak{H}^{\otimes k+l-2}}}&\leq  \lambda(k,l)^2q!^4\sum_{\substack{r=0\\r\neq q/2-1}}^{q-k} \norm{f\widetilde{\otimes}_{r+1}f }^2_{\mathfrak{H}^{\otimes 2q-2-2r}}\\
    &\quad +\frac{q!^2}{(q-k-l+2)!}\norm{q(q/2-1)! {q-1\choose q/2-1}^2f\widetilde{\otimes}_{q/2} f-f }_{\mathfrak{H}^{\otimes q}}^2\,. 
\end{align*}
From  Lemma \ref{lemma_DF^2gammaconvergencecondition}
we see 
\[
\norm{f\widetilde{\otimes}_{r+1}f }^2_{\mathfrak{H}^{\otimes 2q-2-2r}}\le C_q\E{\Theta^2}\ 
 \hbox{and}\ 
 \norm{q(q/2-1)! {q-1\choose q/2-1}^2f\widetilde{\otimes}_{q/2} f-f }_{\mathfrak{H}^{\otimes q}}^2
 \le  C_q\E{\Theta^2}\,. 
 \]
 This 
yields the estimate \eqref{estimate_DkDlbymalliavindifference}.  
\end{proof}
The next proposition is an analogue of Proposition \ref{prop_densityestimate_DF^2_0thderivative} for the derivatives
of densities instead of the densities themselves.
\begin{proposition}
\label{prop_densityestimate_DF^2_kthderivative}
Let $F$ be a multiple Wiener integral of an even order $q\geq 2$ such
that $\E{F^2}=\alpha$ and $1/\norm{DF}^2_\frak{H}\in \cap_{p\geq 1}L^p$. Let $k\geq 1$ be an integer. Assume that the
quantities $\E{w^{-6i}}$, $\E{(F+\alpha)^{-6j}}$ and
$\E{(F+\alpha)^{2(\alpha-n)}}$, $1\leq i\leq 2k+2$, $1\leq j\leq k+1$,
$1\leq n\leq \ceil{\alpha}-1$, exist and are finite. Then, for every
$x\neq 0$ and $\alpha>0$, we have the pointwise density estimate 
\begin{align*}
     &\abs{p^{(k)}_{F+\alpha}(x)-p^{(k)}_{\mathcal{G}_\alpha}(x)}\leq P(x)\E{\brac{{q\brac{F+\alpha}} -w}^2}^{1/2},
\end{align*}
where $P(x)$ is a polynomial whose coefficients depend on $x$ and are finite for any choice of $x$. The variables of $P(x)$ are $\E{w^{-6i}}$, $\E{(F+\alpha)^{-6j}}$ and
$\E{(F+\alpha)^{2(\alpha-n)}}$, $1\leq i\leq 2k+2$, $1\leq j\leq k+1$,
$1\leq n\leq \ceil{\alpha}-1$. Moreover for $x=0$, the same estimate holds if $\alpha>k+1$.     
\end{proposition}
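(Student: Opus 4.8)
The plan is to follow the blueprint of the proof of Proposition \ref{prop_densityestimate_DF^2_0thderivative}, upgrading every first-order object to its $k$-th order counterpart. First I would combine the density representation of Lemma \ref{lemma_decompositiondensityforgammaconvergence},
\[
    p^{(k)}_{F+\alpha}(x)=\E{\mathds{1}_{\{F+\alpha>x\}}\brac{\nu_{k+1}(F+\alpha)+T_{k+1}}},
\]
with the Gamma representation $p^{(k)}_{\mathcal{G}_\alpha}(x)=\E{\mathds{1}_{\{\mathcal{G}_\alpha>x\}}\nu_{k+1}(\mathcal{G}_\alpha)}$ of Corollary \ref{corollary_densitygamma}. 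The triangle inequality then splits the difference of the $k$-th derivatives into a \emph{Stein part}, $\abs{\E{\mathds{1}_{\{F+\alpha>x\}}\nu_{k+1}(F+\alpha)}-\E{\mathds{1}_{\{\mathcal{G}_\alpha>x\}}\nu_{k+1}(\mathcal{G}_\alpha)}}$, and a \emph{remainder} $\E{\abs{T_{k+1}}}$. For $x\leq 0$ one first rewrites $\mathds{1}_{\{F+\alpha>x\}}=1-\mathds{1}_{\{F+\alpha\leq x\}}$ and uses $\E{\nu_{k+1}(F+\alpha)+T_{k+1}}=0$ (the corresponding Skorokhod integral $G_{k+1}$ having zero mean) to pass to the indicator $\mathds{1}_{\{F+\alpha\leq x\}}$, exactly as in the $k=0$ case.

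For the Stein part I would apply Proposition \ref{prop_steinmethod_x>0} when $x>0$, Proposition \ref{prop_steinmethod_x<0} when $x<0$, and Proposition \ref{prop_steinmethod_x=0} when $x=0$; the last of these is well-defined only for $\alpha>k+1$, which is the source of that restriction. Because $F$ lives in the $q$-th chaos we have $-DL^{-1}F=q^{-1}DF$, hence $\inner{DF,-DL^{-1}F}_\mathfrak{H}=q^{-1}w$ and $F+\alpha-\inner{DF,-DL^{-1}F}_\mathfrak{H}=q^{-1}\brac{q(F+\alpha)-w}$. Thus the Stein estimate \eqref{esimate_steinmethod_x>0} and its analogues bound the Stein part by the $x$-dependent coefficients (multiples of $d_1(x),d_2(x),d_3(x)$ and the moments $\E{(F+\alpha)^{2(\alpha-n)}}$, $\E{(F+\alpha)^{-2}}$) times $\E{\brac{q(F+\alpha)-w}^2}^{1/2}$, which is already of the desired form.

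The heart of the matter, and the main obstacle, is the bound on $\E{\abs{T_{k+1}}}$. Here I would exploit the crucial structural feature of $T_{k+1}$ recorded in Lemma \ref{lemma_decompositiondensityforgammaconvergence}: it is a finite sum in which every summand is \emph{linear} in a single occurrence of $\Theta$ or of $\Lambda(l,m)$, the prefactors being real polynomials $a$, $b_{l,m,\dots}$ in the variables \eqref{list_variablesinaandb} times (for the $\Lambda$-terms) a contraction of Malliavin derivatives. This linearity is precisely what allows one power of $\E{\Theta^2}^{1/2}$ to be extracted. For the term $a\Theta$, a single Cauchy-Schwarz gives $\E{\abs{a\Theta}}\leq \E{a^2}^{1/2}\E{\Theta^2}^{1/2}$. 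For a generic $\Lambda$-term I would first apply Cauchy-Schwarz in $\mathfrak{H}^{\otimes\bullet}$ to bound the contraction by $\norm{\Lambda(l,m)}\prod_i\norm{D^{n_i}F}\prod_j\norm{D^{q_j}F}$, and then a second Cauchy-Schwarz to isolate $\E{\norm{\Lambda(l,m)}^2}^{1/2}$, which estimate \eqref{estimate_DkDlbymalliavindifference} of Lemma \ref{lemma_DkDlbound} controls by a constant multiple of $\E{\Theta^2}^{1/2}$.

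It remains to verify that the leftover expectations are finite, and this is exactly where the precise moment hypotheses enter and the exponent $6$ is explained. After isolating $\Theta$ or $\Lambda(l,m)$ one is left with $\E{(\text{prefactor}\times\text{derivative norms})^2}$; by part (iv) of Lemma \ref{lemma_decompositiondensityforgammaconvergence} the prefactor carries factors $w^{-i}$ with $i\leq 2k+2$ and $(F+\alpha)^{-j}$ with $j\leq k+1$, alongside positive powers of chaos variables. Squaring and then applying Hölder with exponents $(3,3,3)$ to separate the $w$-factors, the $(F+\alpha)$-factors and the chaos factors produces $\E{w^{-6i}}^{1/3}$, $\E{(F+\alpha)^{-6j}}^{1/3}$ and a sixth-moment term; the latter is finite by the hypercontractivity of Lemma \ref{lemma_hypercontract}, while the former two are finite by hypothesis in the stated ranges $1\leq i\leq 2k+2$, $1\leq j\leq k+1$ (the factor $2$ coming from the Cauchy-Schwarz and the factor $3$ from the three-fold Hölder). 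Gathering the finitely many finite moments into the coefficients of a polynomial $P(x)$ and combining with the Stein bound yields the claimed estimate, with the case $x=0$ handled identically under $\alpha>k+1$.
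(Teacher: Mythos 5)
Your proposal is correct and follows essentially the same route as the paper's proof: the same decomposition via Lemma \ref{lemma_decompositiondensityforgammaconvergence} and Corollary \ref{corollary_densitygamma}, the same three Stein propositions according to the sign of $x$ (with $\alpha>k+1$ needed at $x=0$), the same zero-mean Skorokhod-integral trick for $x\leq 0$, the same Cauchy--Schwarz plus Lemma \ref{lemma_DkDlbound} treatment of $T_{k+1}$, and the same squaring-plus-threefold-H\"older/hypercontractivity accounting that produces the negative moments of orders $6i$ and $6j$. Your explicit observation that $-DL^{-1}F=q^{-1}DF$ converts the Stein bound into $\E{\brac{q(F+\alpha)-w}^2}^{1/2}$ is a detail the paper leaves implicit, but it is the same argument.
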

\begin{proof}
Let us start by considering the case where $x>0$. Per Corollary \ref{corollary_densitygamma} and Lemma \ref{lemma_decompositiondensityforgammaconvergence}, we have
\begin{align}
\label{equation_kthderivativeF+alpha_minuskthderivativeGalpha_x>0}
  p_{F+\alpha}^{(k)}(x)-p_{\mathcal{G}_\alpha}^{(k)}(x)&=\E{\mathds{1}_{\{F+\alpha>x\}}\nu_{k+1}(F+\alpha)}-\E{\mathds{1}_{\{\mathcal{G}_{\alpha}>x\}}\nu_{k+1}(\mathcal{G}_\alpha)}\nonumber\\
  &\quad +\E{\mathds{1}_{\{F+\alpha>x\}}T_{k+1}}.
\end{align}
Applying Proposition \ref{prop_steinmethod_x>0} allows us to bound the
term 
\begin{equation*}
\E{\mathds{1}_{\{F+\alpha>x\}}\nu_{k+1}(F+\alpha)}-\E{\mathds{1}_{\{\mathcal{G}_{\alpha}>x\}}\nu_{k+1}(\mathcal{G}_\alpha)}.
\end{equation*}
Next, the definition of the term $T_{k+1}$ given in Lemma \ref{lemma_decompositiondensityforgammaconvergence} implies that 
\begin{align}
\label{expand_indicatorT_k+1}
    &\abs{\E{\mathds{1}_{\{F+\alpha>x\}}T_{k+1}}}\leq \E{a^2}^{1/2}\E{\Theta^2}^{1/2}+\sum_{(l,m,n_1,\dots,n_p,q_1,\ldots,q_r)\in I}\nonumber\\
    &\qquad\quad\E{\prod_{i=1}^p \norm{D^{n_i}F}^2_{\mathfrak{H}^{\otimes n_i}}
      \prod_{i=1}^r\norm{ D^{q_i}F}^2_{\mathfrak{H}^{\otimes q_i}}
      \abs{b_{l,m,n_1,\dots,n_p,q_1,\ldots,q_r}}^2}^{1/2}\E{\norm{\Lambda(l,m)}_{\mathfrak{H}^{\otimes l+m-2}}^2}^{1/2},
\end{align}
where the index set $I$ is defined in Lemma
\ref{lemma_decompositiondensityforgammaconvergence} as well. Regarding
the terms on the right hand side of \eqref{expand_indicatorT_k+1}, Lemma \ref{lemma_DkDlbound} ensures that
\begin{align*}
    \E{\norm{\Lambda(l,m)}_{\mathfrak{H}^{\otimes l+m-2}}^2}^{1/2}\leq C_2(q,l,m)\E{\Theta^2}^{1/2}. 
\end{align*}
In particular, we use the fact that $q\geq 2$ is an even integer here.
 
 What remains to deal with is the term $\E{a^2}^{1/2}$ as well as the term
\begin{align*}
    \E{\prod_{i=1}^p \norm{D^{n_i}F}^2_{\mathfrak{H}^{\otimes n_i}} \prod_{i=1}^r\norm{ D^{q_i}F}^2_{\mathfrak{H}^{\otimes q_i}}\abs{b_{l,m,n_1,\dots,n_p,q_1,\ldots,q_r}}^2}^{1/2}. 
\end{align*}
We will only study the former quantity, as the latter can be bounded
in the exact same way. Lemma
\ref{lemma_decompositiondensityforgammaconvergence} states that $a$ is
a real polynomial and that every term in $a$ has the form
\begin{align*}
 \mathcal{T}= C\frac{1}{w^i}\frac{1}{(F+\alpha)^j}F^v\prod_{s\in I}\inner{D^{{l}}F\otimes_{1}D^{{m}}F \bigotimes_{i=1}^p D^{n_i}F,\bigotimes_{j=1}^r D^{q_j}F}_{\mathfrak{H}^{\otimes \sum_{i=1}^r q_i}}^{V_s},
\end{align*}
where $i,j,v$ and $\{V_s\colon s\in I\}$ are non-negative integers
such that $i\leq 2k+2$ and $j\leq k+1$, while $C$ denotes some real
constant. Furthermore, the hypercontractivity of Wiener chaos (Lemma \ref{lemma_hypercontract})
implies that for $m\leq q$ and any $p\geq 2$, 
\begin{align*}
    \E{\norm{D^mF}_{\mathfrak{H}^{\otimes m}}^p}^{1/p}\leq \tilde{C}\E{F^2}^{1/2}<\infty,
\end{align*}
where $\tilde{C}$ is some positive constant. Moreover, the following contraction
inequality holds (see for instance \cite[Lemma 2.4]{doblerpeccati_ustatcontraction}).
\begin{align*}
  \norm{  D^{l}F\otimes_{1}D^{m}F}_{\mathfrak{H}^{\otimes l+m-2}}\leq \norm{ D^{l}F}_{\mathfrak{H}^{\otimes l}}\norm{ D^{m}F}_{\mathfrak{H}^{\otimes m}}. 
\end{align*}
Combining the previous facts together with Minskowki's and H\"{o}lder's inequalities leads to 
\begin{align*}
    \E{a^2}^{1/2}&=\E{\brac{\sum \mathcal{T}}^2}^{1/2}\\
    &\leq \sum \E{{\mathcal{T}}^2}^{1/2}\\
    &\leq \sum \bar{C}\E{\frac{1}{w^{6i}}}^{1/6}\E{\frac{1}{(F+\alpha)^{6j}}}^{1/6},
\end{align*}
with $\bar{C}$ being some positive constant. Therefore,
$\E{a^2}^{1/2}$ is bounded by a real polynomial whose variables are
$\E{w^{-6i}}^{1/6}$ and $\E{(F+\alpha)^{-6j}}^{1/6}$ such that $i\leq
2k+2$ and $j\leq k+1$. Combining
\eqref{equation_kthderivativeF+alpha_minuskthderivativeGalpha_x>0},
Proposition \ref{prop_steinmethod_x>0} and
\eqref{expand_indicatorT_k+1} yields the desired conclusion in the case $x>0$. 

We now turn to the case where $x\leq 0$. The identity at \eqref{formula_densitymalcal} and Lemma \ref{lemma_decompositiondensityforgammaconvergence} imply that 
\begin{align*}
     p^{(k)}_{F+\alpha}(x)&=(-1)^k\E{\mathds{1}_{\{F+\alpha>x\}}
                            \delta\brac{G_k\frac{DF}{w}}}\\
  &=(-1)^k\E{\brac{1-\mathds{1}_{\{F+\alpha\leq x\}}}\delta\brac{G_k\frac{DF}{w}}}\\
    &=(-1)^{k+1}\E{\mathds{1}_{\{F+\alpha\leq x\}}\delta\brac{G_k\frac{DF}{w}}}\\
    &=\E{\mathds{1}_{\{F+\alpha\leq x\}} \brac{\nu_{k+1}(F+\alpha) +T_{k+1}}},
\end{align*}
where $\nu_{k+1}(y)$ is defined at \eqref{def_nuk}. Corollary
\ref{corollary_densitygamma} allows us to write that, whenever $x< 0$ and
$\alpha>0$, we have
\begin{align*}
    p^{(k)}_{\mathcal{G}_\alpha}(x)=\E{\mathds{1}_{\{\mathcal{G}_{\alpha}\leq x\}}\nu_{k+1}(\mathcal{G}_\alpha)}=0
\end{align*}
and whenever $\alpha>k+1$, 
\begin{align*}
    p^{(k)}_{\mathcal{G}_\alpha}(0)=\E{\mathds{1}_{\{\mathcal{G}_{\alpha}\leq 0\}}\nu_{k+1}(\mathcal{G}_\alpha)}=0. 
\end{align*}
In order to deal with bounding the quantity
\begin{align*}
    \abs{p^{(k)}_{F+\alpha}(x)-p^{(k)}_{\mathcal{G}_\alpha}(x)}=\abs{\E{\mathds{1}_{\{F+\alpha\leq x\}} \brac{\nu_{k+1}(F+\alpha) +T_{k+1}}}- \E{\mathds{1}_{\{\mathcal{G}_{\alpha}\leq x\}}\nu_{k+1}(\mathcal{G}_\alpha)}}
\end{align*}
when $x\leq 0$, we follow the same methodology as is the case where
$x>0$ which we addressed before, the only difference being that we
make use of Propositions \ref{prop_steinmethod_x<0} and
\ref{prop_steinmethod_x=0} instead of \ref{prop_steinmethod_x>0}. Note
that Proposition \ref{prop_steinmethod_x=0} is the reason we require $\alpha>k+1$ for $x=0$. 
\end{proof}
With the above proposition at hand, we are now ready to prove the
second  of our main results, namely Theorem \ref{theorem_fourthmoment_singlechaos_kthderivative}.
\begin{proof}[Proof of Theorem
  \ref{theorem_fourthmoment_singlechaos_kthderivative}]
Combining \cite[Theorem 3.4]{ACP14} which states
\begin{align*}
    \E{\brac{{q\brac{F+\alpha}} -w}^2}^{1/2}\leq \sqrt{\frac{q^2}{3}\brac{\E{F^4}-6\E{F^3}+6(1-\alpha)\alpha+3\alpha^2}}
\end{align*}
with Proposition \ref{prop_densityestimate_DF^2_kthderivative} yields
the desired conclusion.
\end{proof}

\section{General Gaussian functionals}
\label{section_generalrv}
In this section, we deal with the case where $F$ is a random variable
that has a possibly infinite Wiener chaos expansion. This is in
contrast with the previous section where we assumed that $F$ belongs
to a single Wiener chaos of an even order. It cannot be expected   to
obtain density estimates expressed in terms of the fourth moment of
$F$ in this case, however we can bound the pointwise distance between the  densities    in term of Malliavin derivatives. We will denote 
\begin{align*}
    \bar{w}= \inner{DF, -DL^{-1}F}_\mathfrak{H}\quad \hbox{for}\quad \E F  =0.
\end{align*}
We start with a technical lemma that will serve in the proof of the
density estimates mentioned above. 
\begin{lemma}
\label{lemma_consequenceofpoincare}
    Let $F\in\mathbb{D}^{2,s}$ with $s\geq 4$ such that $\E{F}=0$ and $\E{F^2}=\alpha$. Let $m$ be the largest even integer less than or equal to $s/2$. Then, for every $t\leq m\leq s/2$, it holds that
 \begin{align*}
        \E{\brac{\bar{w}-(F+\alpha)}^t}^{1/t}\leq \sqrt{m-1}\E{\norm{D\bar{w}-DF}^{s/2}_\mathfrak{H}}^{2/s}.
    \end{align*}
\end{lemma}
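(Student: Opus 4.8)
The plan is to recognize that $G := \bar{w} - (F+\alpha)$ is a centered random variable, apply the Poincaré inequality (Lemma \ref{lemma_poincare}) at the even integer $p=m$, and then reduce both sides to the claimed moment orders by the monotonicity of $L^p$-norms on a probability space.

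First I would verify that $\E{\bar{w}} = \alpha$, which makes $G$ centered. Taking $u = -DL^{-1}F$ in the duality relation \eqref{intbypart_malliavin} and using $\delta(-DL^{-1}F) = -\delta D L^{-1}F = LL^{-1}F = F - \E{F} = F$ (valid since $\E{F}=0$), I obtain $\E{\bar{w}} = \E{\inner{DF,-DL^{-1}F}_\mathfrak{H}} = \E{F\,\delta(-DL^{-1}F)} = \E{F^2} = \alpha$. Consequently $\E{G} = \alpha - 0 - \alpha = 0$, and since $\alpha$ is constant, $DG = D\bar{w} - DF$.

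Next, because $s \geq 4$, the integer $m$ is an even integer with $m \geq 2$, so Lemma \ref{lemma_poincare} applies with $p=m$ once one knows $G \in \mathbb{D}^{1,m}$. As $G$ is centered, the inequality gives
\begin{align*}
\E{G^m} \leq (m-1)^{m/2}\,\E{\norm{DG}^m_\mathfrak{H}} = (m-1)^{m/2}\,\E{\norm{D\bar{w}-DF}^m_\mathfrak{H}},
\end{align*}
that is, $\E{G^m}^{1/m} \leq \sqrt{m-1}\,\E{\norm{D\bar{w}-DF}^m_\mathfrak{H}}^{1/m}$.

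Finally I would close the argument with two applications of Lyapunov's inequality. On the left-hand side, for any $t \leq m$ one has $G^t \leq \abs{G}^t$ pointwise, hence $\E{G^t}^{1/t} \leq \E{\abs{G}^t}^{1/t} \leq \E{\abs{G}^m}^{1/m} = \E{G^m}^{1/m}$, where the last equality uses that $m$ is even. On the right-hand side, since $m \leq s/2$ and $\norm{D\bar{w}-DF}_\mathfrak{H} \geq 0$, monotonicity of $L^p$-norms yields $\E{\norm{D\bar{w}-DF}^m_\mathfrak{H}}^{1/m} \leq \E{\norm{D\bar{w}-DF}^{s/2}_\mathfrak{H}}^{2/s}$. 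Chaining these three estimates produces the claimed bound. The only delicate point is the regularity claim $G \in \mathbb{D}^{1,m}$ needed to invoke Lemma \ref{lemma_poincare}: this is where the hypothesis $F \in \mathbb{D}^{2,s}$ (so that $D\bar{w}$, involving $D^2F$, is well defined and suitably integrable) and $s \geq 4$ (so that $m \geq 2$) enter, and establishing it via Hölder's inequality applied to the product $\inner{DF,-DL^{-1}F}_\mathfrak{H}$ is the main obstacle I expect.
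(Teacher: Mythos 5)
Your proposal is correct and follows essentially the same route as the paper: verify $\E{\bar{w}}=\alpha$ by integration by parts so that $\bar{w}-(F+\alpha)$ is centered, apply the Poincar\'e inequality (Lemma \ref{lemma_poincare}) at the even integer $m$, and use Lyapunov's inequality on both sides to pass from $t$ to $m$ and from $m$ to $s/2$. The only differences are cosmetic: you spell out the duality computation $\E{\inner{DF,-DL^{-1}F}_\mathfrak{H}}=\E{F^2}$ and flag the regularity $\bar{w}-(F+\alpha)\in\mathbb{D}^{1,m}$ explicitly, both of which the paper leaves implicit.
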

\begin{proof}
Using Lyapounov's inequality and the fact that
$\E{\bar{w}}=\E{F^2}=\alpha$ (this is a consequence of integration by
parts) so that $\E{\bar{w}-(F+\alpha)}=0$, we can write
 \begin{align*}
        \E{\brac{\bar{w}-(F+\alpha)}^t}^{1/t}\leq \E{\brac{\bar{w}-(F+\alpha)}^m}^{1/m}\leq \sqrt{m-1}\E{\norm{D\bar{w}-DF}^m_\frak{H}}^{1/m}.
 \end{align*}
In particular, the last inequality is due to the Poincar\'e  inequality in Lemma \ref{lemma_poincare}. Using Lyapounov's inequality again yields
  \begin{align*}
        \E{\brac{\bar{w}-(F+\alpha)}^t}^{1/t}\leq \sqrt{m-1}\E{\norm{D\bar{w}-DF}^{s/2}_\frak{H}}^{2/s}.
    \end{align*}
\end{proof}
We can now state the main result of this section, namely a density
estimate in terms of Malliavin operators. As before, we denote $\bar{w}= \inner{DF,
  -DL^{-1}F}_\mathfrak{H}$.
\begin{theorem}
\label{theorem_gammaconvergence_sumofchaos}
Let $F\in \mathbb{D}^{2,s}$, $\E{\abs{F}^{2p}}<\infty$ and $\E{\abs{\bar{w}}^{-r}}<\infty$ where $p>1,r>2,s\geq 4$ satisfy $1/p+2/r+3/s=1$.  Assume further that $\E{F}=0$,
$\E{F^2}=\alpha$ and that the quantities
$\E{(F+\alpha)^{-2}}$, $\E{\bar{w}^{-6}}$,
$\E{(F+\alpha)^{-2}\bar{w}^{-2}}$, $\E{\norm{DL^{-1}F}_\mathfrak{H}^3}$
and $\E{(F+\alpha)^{2(\alpha-i)}}$, $1\leq i\leq \ceil{\alpha}-1$, are finite. Then, for every $x\neq 0$ and $\alpha>0$, we have the pointwise density estimate 
\begin{align*}
  \abs{ p_{F+\alpha}(x)-p_{\mathcal{G}_\alpha}(x)} &\leq \bigg( d_1(x)\sum_{i=1}^{\ceil{\alpha}-1}\E{(F+\alpha)^{2(\alpha-i)}}^{1/2}+ {d_2(x)}\E{\frac{1}{(F+\alpha)^2}}^{1/2}+{d_3(x)}\\
 &\quad +\sqrt{\frac{s}{2}-1}\E{\frac{1}{\bar{w}^2}}^{1/2}+
   \sqrt{\frac{s}{2}-1}\E{\frac{(1-\alpha)^2}{\bar{w}^2(F+\alpha)^2}}^{1/2}\\
  &\quad +\E{\frac{1}{\bar{w}^6}}^{1/3}\E{\norm{DL^{-1}F}_\mathfrak{H}^3}^{1/3}\bigg)\E{\norm{D\bar{w}-DF}^{s/2}_\mathfrak{H}}^{2/s},
\end{align*}
where the factors $d_1(x)$, $d_2(x)$ and $d_3(x)$ are positive and
finite for every $x \neq 0$. Furthermore, for $x=0$, the same estimate holds if $\alpha>k+1$.  
\end{theorem}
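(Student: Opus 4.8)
The plan is to mirror the fixed-chaos argument behind Proposition \ref{prop_densityestimate_DF^2_0thderivative}, making two structural replacements forced by the absence of a product formula in the general setting: the Malliavin norm $w=\norm{DF}^2_{\mathfrak H}$ is replaced throughout by $\bar w=\inner{DF,-DL^{-1}F}_{\mathfrak H}$, and the moment control of $\Theta$ coming from \cite{ACP14} is replaced by the Poincar\'e-type estimate of Lemma \ref{lemma_consequenceofpoincare}. The first task is to establish the density representation $p_{F+\alpha}(x)=\E{\mathds{1}_{\{F+\alpha>x\}}\delta(v)}$ with $v:=-DL^{-1}F/\bar w$. Since $\bar w\geq 0$ always and $\E{\bar w^{-6}}<\infty$ forces $\bar w>0$ a.s., non-degeneracy holds. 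For smooth compactly supported $\psi$ the chain rule gives $\inner{D\psi(F+\alpha),-DL^{-1}F}_{\mathfrak H}=\psi'(F+\alpha)\bar w$, so dividing by $\bar w$ and using the duality \eqref{intbypart_malliavin},
\[
\E{\psi'(F+\alpha)}=\E{\inner{D\psi(F+\alpha),v}_{\mathfrak H}}=\E{\psi(F+\alpha)\,\delta(v)},
\]
provided $v\in\operatorname{dom}(\delta)$; the approximation argument of Proposition \ref{prop_densityrepgeneral} then yields both the density and the representation. The role of the balance $1/p+2/r+3/s=1$, together with $\E{\abs{F}^{2p}},\E{\bar w^{-r}}<\infty$ and $F\in\mathbb D^{2,s}$, is precisely to guarantee $v\in\operatorname{dom}(\delta)$ and $\delta(v)\in L^1$, so that this integration by parts and the limiting procedure are licit.

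Next I would compute and decompose $\delta(v)$. By the product rule \cite[Proposition 1.3.3]{nualart2006malliavin}, together with $\delta(-DL^{-1}F)=LL^{-1}F=F$ and $D(1/\bar w)=-\bar w^{-2}D\bar w$,
\[
\delta(v)=\frac{F}{\bar w}+\frac{1}{\bar w^2}\inner{D\bar w,-DL^{-1}F}_{\mathfrak H}.
\]
A direct algebraic rearrangement, entirely parallel to \eqref{equation_T1} and using $\inner{DF,-DL^{-1}F}_{\mathfrak H}=\bar w$, gives $\delta(v)=\nu_1(F+\alpha)+T$, where $\nu_1(y)=1+(1-\alpha)/y$ and
\[
T=-\brac{\frac{1}{\bar w}+\frac{1-\alpha}{\bar w(F+\alpha)}}\brac{\bar w-(F+\alpha)}+\frac{1}{\bar w^2}\inner{D\bar w-DF,\,-DL^{-1}F}_{\mathfrak H}.
\]
This is the exact analogue of $\delta(u)=\nu_1(F+\alpha)+T_1$, with $\bar w-(F+\alpha)$ playing the role of $\Theta$ and $\inner{D\bar w-DF,-DL^{-1}F}_{\mathfrak H}$ the role of $\inner{\Lambda(2,1),DF}_{\mathfrak H}$.

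With the decomposition in hand the estimate follows from the triangle inequality together with Corollary \ref{corollary_densitygamma} for the Gamma side:
\[
\abs{p_{F+\alpha}(x)-p_{\mathcal G_\alpha}(x)}\leq \abs{\E{\mathds{1}_{\{F+\alpha>x\}}\nu_1(F+\alpha)}-\E{\mathds{1}_{\{\mathcal G_\alpha>x\}}\nu_1(\mathcal G_\alpha)}}+\E{\abs{T}}.
\]
The first term is bounded by Proposition \ref{prop_steinmethod_x>0}, producing the factor $d_1(x)\sum_i\E{(F+\alpha)^{2(\alpha-i)}}^{1/2}+d_2(x)\E{(F+\alpha)^{-2}}^{1/2}+d_3(x)$ times $\E{((F+\alpha)-\bar w)^2}^{1/2}$. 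For $\E{\abs{T}}$, Cauchy--Schwarz on the first summand yields the factors $\E{\bar w^{-2}}^{1/2}$ and $\abs{1-\alpha}\E{\bar w^{-2}(F+\alpha)^{-2}}^{1/2}$, again against $\E{((F+\alpha)-\bar w)^2}^{1/2}$, while H\"older (using $\E{\bar w^{-6}},\E{\norm{DL^{-1}F}^3_{\mathfrak H}}<\infty$) on the second summand gives $\E{\bar w^{-6}}^{1/3}\E{\norm{DL^{-1}F}^3_{\mathfrak H}}^{1/3}\E{\norm{D\bar w-DF}^{s/2}_{\mathfrak H}}^{2/s}$. Finally I would apply Lemma \ref{lemma_consequenceofpoincare} with $t=2$ to each occurrence of $\E{((F+\alpha)-\bar w)^2}^{1/2}$, noting $D((F+\alpha)-\bar w)=DF-D\bar w$, to bound it by $\sqrt{s/2-1}\,\E{\norm{D\bar w-DF}^{s/2}_{\mathfrak H}}^{2/s}$; collecting terms produces the claimed bound, with the $\sqrt{s/2-1}$ displayed on the $\bar w$-terms and absorbed into $d_1,d_2,d_3$ on the Stein terms. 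The cases $x<0$ and $x=0$ are identical, invoking Propositions \ref{prop_steinmethod_x<0} and \ref{prop_steinmethod_x=0} instead of Proposition \ref{prop_steinmethod_x>0}, the latter forcing $\alpha>1$ at $x=0$.

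The main obstacle is the density-representation step. Unlike the fixed-chaos case, $F$ has an infinite chaos expansion, so $1/w$ need not be integrable and one cannot use $u=DF/w$; the replacement $v=-DL^{-1}F/\bar w$ is what rescues the argument, but it demands a genuine verification that $v\in\operatorname{dom}(\delta)$ and that $\delta(v)\in L^1$ for a general functional. This is exactly where the coupled integrability hypotheses enter through $1/p+2/r+3/s=1$, and it is this checking — rather than the subsequent Cauchy--Schwarz and H\"older bookkeeping, which is routine — that I expect to be the delicate part.
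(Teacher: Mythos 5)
Your proposal reproduces the paper's proof almost step for step: the same representation $p_{F+\alpha}(x)=\E{\mathds{1}_{\{F+\alpha>x\}}\delta\brac{-DL^{-1}F/\bar w}}$, the same algebraic splitting of $\delta(v)$ into $\nu_1(F+\alpha)$ plus remainder terms (your $T$ is exactly the paper's $\mathcal{A}_2+\mathcal{A}_3+\mathcal{A}_4$), the same appeal to Proposition \ref{prop_steinmethod_x>0} for the Stein term, Cauchy--Schwarz and H\"older for the remainders, and Lemma \ref{lemma_consequenceofpoincare} to convert $\E{\brac{F+\alpha-\bar w}^2}^{1/2}$ into $\E{\norm{D\bar w-DF}_\mathfrak{H}^{s/2}}^{2/s}$. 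The single point of divergence is the step you single out as the main obstacle: the paper does not verify $v\in\operatorname{dom}(\delta)$ by hand, it simply quotes the density representation from \cite[Proposition 3.3]{HLN14}, whose hypotheses are exactly the coupled integrability conditions $F\in\mathbb{D}^{2,s}$, $\E{\abs{F}^{2p}}<\infty$, $\E{\abs{\bar w}^{-r}}<\infty$ with $1/p+2/r+3/s=1$ carried in the theorem statement. So your reading of what those hypotheses are for is correct, and the gap you deliberately leave open is closed in the paper by citation rather than by new work.

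One correction to your sketch of that step: the assertion that $\bar w\geq 0$ always is false for a general functional $F$ with infinite chaos expansion. Nonnegativity of $\inner{DF,-DL^{-1}F}_\mathfrak{H}$ holds on a fixed Wiener chaos (where $\bar w=\norm{DF}^2_\mathfrak{H}/q$), but in general only the conditional expectation $\E{\bar w\mid F}$ is nonnegative — this is what the Nourdin--Viens representation (Lemma \ref{lemma_nourdinviens}) gives — while $\bar w$ itself can take negative values. This does not damage the overall argument, since \cite[Proposition 3.3]{HLN14} requires only $\E{\abs{\bar w}^{-r}}<\infty$ and no sign condition, but any self-contained completion of your derivation of the density formula would have to avoid relying on that claim.
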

\begin{proof}
Using the density representation formula given in \cite[Proposition
3.3]{HLN14}, we can write
    \begin{align*}
        p_{F+\alpha}(x)=\E{\mathds{1}_{\{F+\alpha>x\}}\delta\brac{\frac{-DL^{-1}F}{\bar{w}}} }.
    \end{align*}
As in the proof of Proposition
\ref{prop_densityestimate_DF^2_0thderivative}, we split the proof into
two parts: the case where $x>0$ and the case where $x\leq 0$. We will
treat the first case only, as the second one is similar and
simpler. In the case where $x>0$, the above density representation formula can be decomposed as
\begin{align*}
    &p_{F+\alpha}(x)=\E{\mathds{1}_{\{F+\alpha>x\}} \brac{\frac{F}{\bar{w}} +\frac{1}{\bar{w}^2}\inner{D\bar{w}, -DL^{-1}F}_\mathfrak{H}}}\\
    &=\E{ \mathds{1}_{\{F+\alpha>x\}}\brac{1+\frac{1-\alpha}{F+\alpha}} }+\E{\mathds{1}_{\{F+\alpha>x\}}\brac{\frac{F+\alpha}{\bar{w}}-1}}\\
    &\quad +\E{\mathds{1}_{\{F+\alpha>x\}} \frac{1}{\bar{w}^2}\inner{D\bar{w}-DF,-DL^{-1}F }_\mathfrak{H}}+\E{\mathds{1}_{\{F+\alpha>x\}}(1-\alpha)\brac{\frac{1}{\bar{w}}-\frac{1}{F+\alpha}}}\\
    &=\E{ \mathds{1}_{\{F+\alpha>x\}}\brac{1+\frac{1-\alpha}{F+\alpha}} }+\E{\mathds{1}_{\{F+\alpha>x\}}\frac{1}{\bar{w}}\brac{F+\alpha-\bar{w}}}\\
    &\quad +\E{\mathds{1}_{\{F+\alpha>x\}} \frac{1}{\bar{w}^2}\inner{D\bar{w}-DF,-DL^{-1}F }_\mathfrak{H}}+\E{\mathds{1}_{\{F+\alpha>x\}}\frac{1-\alpha}{\bar{w}(F+\alpha)}\brac{F+\alpha-\bar{w}}}. 
\end{align*}
Corollary \ref{corollary_densitygamma} states that the density
function of the Gamma random variable $\mathcal{G}_\alpha$ can be
represented as 
\begin{align*}
    p_{\mathcal{G}_\alpha}(x)= \E{\mathds{1}_{\{\mathcal{G}_{\alpha}>x\}}\brac{1+\frac{1-\alpha}{\mathcal{G}_\alpha}}}, 
\end{align*}
so that we get 
\begin{align*}
    p_{F+\alpha}(x)-p_{\mathcal{G}_\alpha}(x)&= \E{ \mathds{1}_{\{F+\alpha>x\}}\brac{1+\frac{1-\alpha}{F+\alpha}} }-\E{\mathds{1}_{\{\mathcal{G}_{\alpha}>x\}}\brac{1+\frac{1-\alpha}{\mathcal{G}_\alpha}}} \\
    &\quad +\E{\mathds{1}_{\{F+\alpha>x\}}\frac{1}{\bar{w}}\brac{F+\alpha-\bar{w}}}\\
    &\quad +\E{\mathds{1}_{\{F+\alpha>x\}}\frac{1-\alpha}{\bar{w}(F+\alpha)}\brac{F+\alpha-\bar{w}}}\\
    &\quad +\E{\mathds{1}_{\{F+\alpha>x\}} \frac{1}{\bar{w}^2}\inner{D\bar{w}-DF,-DL^{-1}F }_\mathfrak{H}}\\
    &=\mathcal{A}_1+\mathcal{A}_2+\mathcal{A}_3+\mathcal{A}_4. 
\end{align*}
The term $\mathcal{A}_1$ has been bounded in Proposition
\ref{prop_steinmethod_x>0} and we have
    \begin{align*}
      &\mathcal{A}_1 \leq \brac{  d_1(x)\sum_{i=1}^{\ceil{\alpha}-1}\E{(F+\alpha)^{2(\alpha-i)}}^{1/2}+ {d_2(x)}\E{\frac{1}{(F+\alpha)^2}}^{1/2}+{d_3(x)}}\\
      &\hspace{23em}\times\E{\brac{F+\alpha -\bar{w}}^2}^{1/2}.
    \end{align*}
Lemma \ref{lemma_consequenceofpoincare} gives us
\begin{align*}
    \E{\brac{F+\alpha -\bar{w}}^2}^{1/2}\leq \E{\norm{D\bar{w}-DF}^{s/2}_\mathfrak{H}}^{2/s}.
\end{align*}
Using H\"{o}lder's inequality and Lemma
\ref{lemma_consequenceofpoincare} again, we can write
\begin{align*}
    \mathcal{A}_2&\leq \sqrt{\frac{s}{2}-1}\E{\frac{1}{\bar{w}^2}}^{1/2}\E{\norm{D\bar{w}-DF}^{s/2}_\mathfrak{H}}^{2/s},\\
    \mathcal{A}_3&\leq \sqrt{\frac{s}{2}-1}\E{\frac{(1-\alpha)^2}{\brac{\bar{w}(F+\alpha)}^2}}^{1/2}\E{\norm{D\bar{w}-DF}^{s/2}_\mathfrak{H}}^{2/s}
\end{align*}
and  
\begin{align*}
        \mathcal{A}_4
        &\leq \E{\frac{1}{\bar{w}^6}}^{1/3}\E{\norm{DL^{-1}F}_\mathfrak{H}^3}^{1/3}\E{\norm{D\bar{w}-DF}_\mathfrak{H}^3}^{1/3}.
    \end{align*}
Gathering the previous estimates yields the desired density estimate
in the case where $x>0$.  
\end{proof}

\section{Application to random variables in the second Wiener chaos}
\label{sectionapplications}

In this section, we present an application of Theorem
\ref{theorem_fourthmoment_singlechaos_0thderivative}. We will consider random variable $F$ lives in the second Wiener chaos (a chaos known to
have Gamma distributed elements) and derive a density estimate between $F$ and a Gamma density.

It is a well-known fact (see for
instance \cite[Proposition 2.7.13]{nourdinpeccatibook}) that a random
variable $F$ in the second Wiener chaos can be represented as 
\begin{align*}
F=\sum_{i=0}^\infty \lambda_i\brac{I_1(e_i)^2-1},
\end{align*}
where the real numbers $\left\{\lambda_i\colon i \geq 0
  \right\}$ are such that $\lambda_{i}\geq {\lambda_{i+1}}>0$ for all $i\in\N$, and $\{e_i\}_{i\in\N}$ is an orthonormal basis of the Hilbert space $\mathfrak{H}$. Let us make the following assumption.
	\begin{customassumption}{A}~
		\label{assumA}
	$F$ is a random variable with the representation 
 \begin{align}
 \label{sum_assumptionA}
     F=\sum_{i=0}^\infty \zeta_i\brac{I_1(e_i)^2-1}
 \end{align}
such that
\begin{itemize}
\item[(1)] $F$ has finite variance,
\item[(2)] the real numbers $\left\{\zeta_i\colon i \geq 0
  \right\}$ are such that $\zeta_{i}\geq {\zeta_{i+1}}>0$ for all $i\in\N$,
\item[(3)] there exists a real number $\alpha>0$ such that $\alpha-\sum_{i=0}^\infty\zeta_i\geq 0$.
\end{itemize}
\end{customassumption}
The following lemma is a consequence of \cite[Lemma 7.1]{HLN14} on negative moments of $F$ and the norm of their Malliavin derivatives.
\begin{lemma}
\label{lemma_negativemoments_secondchaos}
Let $\alpha$ and $F$ be defined as above. Then, for any $\theta>0$, $\E{(F+\alpha)^{-2\theta}}<\infty$
if and only if there exists a natural number $N(\theta)>2\theta$ for which ${\zeta_{N(\theta)}}>0$. Similarly, for any $\eta>0$,  $\E{\norm{D(F+\alpha)}_{\mathfrak{H}}^{-2\eta}}<\infty$
if and only if there exists a natural number $N(\eta)>2\eta$ for which ${\zeta_{N(\eta)}}>0$. 
\end{lemma}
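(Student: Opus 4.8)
The plan is to reduce both negative-moment questions to statements about weighted sums of independent squared standard Gaussian variables, to which \cite[Lemma 7.1]{HLN14} applies directly. Writing $\xi_i:=I_1(e_i)$, the family $\{\xi_i:i\geq 0\}$ is i.i.d.\ standard Gaussian, and \eqref{sum_assumptionA} gives
\[
 F+\alpha=\sum_{i=0}^\infty \zeta_i\xi_i^2+\Big(\alpha-\sum_{i=0}^\infty\zeta_i\Big),
\]
where the additive constant is nonnegative by part~(3) of Assumption~\ref{assumA} (which also forces $\sum_i\zeta_i<\infty$); in particular $F+\alpha\geq 0$ almost surely, so its negative moments are well posed. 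For the derivative, $DI_1(e_i)=e_i$ and the chain rule \eqref{mal_chain_rule} give $D(I_1(e_i)^2)=2\xi_i e_i$, whence $DF=2\sum_{i=0}^\infty\zeta_i\xi_i e_i$ and, since $\{e_i\}$ is orthonormal and $\alpha$ is constant,
\[
 \norm{D(F+\alpha)}_{\mathfrak{H}}^2=\norm{DF}_{\mathfrak{H}}^2=4\sum_{i=0}^\infty\zeta_i^2\xi_i^2.
\]
Thus, up to a nonnegative additive constant and a positive scalar, both $F+\alpha$ and $\norm{DF}_{\mathfrak{H}}^2$ are weighted sums of i.i.d.\ squared standard Gaussians with nonnegative weights $\zeta_i$ and $\zeta_i^2$, respectively.

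Next I would invoke \cite[Lemma 7.1]{HLN14}, which characterizes finiteness of the negative moments of precisely such sums through the number of strictly positive weights. The underlying mechanism is a comparison with a chi-squared law: setting $S:=\{i:\zeta_i>0\}$, one sandwiches $\sum_i\zeta_i\xi_i^2$ between scalar multiples of $\sum_{i\in S}\xi_i^2$, which is chi-squared with $\lvert S\rvert$ degrees of freedom and whose density vanishes to order $\lvert S\rvert/2-1$ at the origin; the negative moment of the prescribed order is therefore finite exactly when $\lvert S\rvert$ exceeds the corresponding threshold, whose precise value is supplied by \cite[Lemma 7.1]{HLN14}. Because $\{\zeta_i\}$ is non-increasing by part~(2) of Assumption~\ref{assumA}, the set $S$ is an initial segment of $\N$, so ``$\lvert S\rvert$ is large enough'' is equivalent to ``there is an index beyond the threshold at which $\zeta$ is still strictly positive,'' i.e.\ $\exists\,N(\theta)>2\theta$ with $\zeta_{N(\theta)}>0$ (applied to $F+\alpha$) and $\exists\,N(\eta)>2\eta$ with $\zeta_{N(\eta)}>0$ (applied to $\norm{DF}_{\mathfrak{H}}^2$). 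Under Assumption~\ref{assumA} every $\zeta_i$ is positive, so the right-hand conditions hold for all $\theta,\eta$, and the equivalence then records that all the negative moments in question are finite.

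The only genuinely delicate point is the exact threshold on the number of positive weights, which is dictated by the order of vanishing of the density of the weighted chi-squared sum at the origin and by the separate treatment of the finitely-many-weights case against the infinitely-many-weights case (where the sum automatically has all negative moments); since this is exactly the content of \cite[Lemma 7.1]{HLN14}, the work remaining here is the algebraic identifications above together with the monotonicity bookkeeping converting ``$\lvert S\rvert$ large'' into the index condition, and no new estimate is required.
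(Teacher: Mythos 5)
Your proposal is correct and follows essentially the same route as the paper's proof: both reduce $F+\alpha$ to the weighted sum $\sum_i\zeta_i I_1(e_i)^2$ by discarding the nonnegative constant $\alpha-\sum_i\zeta_i$ (Assumption \ref{assumA}(3)), compute $\norm{D(F+\alpha)}_{\mathfrak{H}}^2=4\sum_i\zeta_i^2 I_1(e_i)^2$, and then invoke \cite[Lemma 7.1]{HLN14} for both quantities. The extra discussion you give of the chi-squared comparison underlying that lemma and of the monotonicity of the $\zeta_i$ is consistent with, but not needed beyond, what the paper does.
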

\begin{proof}
Since $\alpha-\sum_{i=0}^\infty\zeta_i\geq 0$, we have for any $\theta>0$,
\begin{align*}
\E{(F+\alpha)^{-2\theta}}&=\E{\brac{\sum_{i=0}^\infty \zeta_i I_1(e_i)^2+\alpha- \sum_{i=0}^\infty \zeta_i }^{-2\theta}}\\
&\leq \E{\brac{\sum_{i=0}^\infty \zeta_i I_1(e_i)^2 }^{-2\theta}}.
\end{align*}
Then, according to \cite[Lemma 7.1]{HLN14}, the right-hand side is
finite if and only if there exists a natural number
$N(\theta)>2\theta$ and $\zeta_{N(\theta)}>0$ in the representation
\eqref{sum_assumptionA}. If such an $N(\theta)$ exists, then there is some constant $C(\theta)$ such that
\begin{align*}
    \E{\brac{\sum_{i=0}^\infty \zeta_i I_1(e_i)^2 }^{-2\theta}} \leq C(\theta) N(\theta)^{-\theta}(\zeta_{N(\theta)})^{-2\theta}. 
\end{align*}
The proof of the finiteness of the negative moments of $\norm{D(F+\alpha)}_{\mathfrak{H}}^2$ is similar since $D(F+\alpha)=2\sum_{i=0}^\infty \zeta_i I_1(e_i)e_i$ and
\begin{align*}
    \norm{D(F+\alpha)}_{\mathfrak{H}}^2=4\sum_{i=0}^\infty \zeta_i^2 I_1(e_i)^2.
\end{align*}
Therefore, \cite[Lemma 7.1]{HLN14} is once more applicable here.  
\end{proof}
We are now ready to present our application. Informally, the next result say under suitable conditions, one can measure the density difference between a random variable $F$ in the second Wiener chaos and a Gamma random variable just by knowing the first four moments of $F$. 
\begin{theorem}
Let $F$ and $\alpha$ be the random variable and the parameter
satisfying Assumption \ref{assumA}, respectively. Let
$\mathcal{G}_{\alpha}$ denote a Gamma distributed random variable with density
\begin{align*}
    p_{\mathcal{G}_\alpha}(x)=\frac{1}{\Gamma(\alpha)}x^{\alpha-1}e^{-x}\mathds{1}_{(0,\infty)}(x).
\end{align*}
Assume further that there is some natural number $N>8$ such that
$\zeta_N>0$. Then, we have the density estimate
\begin{align*}
    \abs{p_{F+\alpha}(x)-p_{\mathcal{G}_\alpha}(x)}
    \leq C(x)\sqrt{\E{F^4}-6\E{F^3}+6(1-\alpha)\alpha+3\alpha^2},
\end{align*}
where $C(x)$ is a constant depending on $x$. 
\end{theorem}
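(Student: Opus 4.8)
The plan is to recognize that a random variable $F$ living in the second Wiener chaos is exactly a multiple Wiener integral $I_2(f)$ of the even order $q=2\geq 2$, with $\E{F^2}=\alpha$, so that Theorem \ref{theorem_fourthmoment_singlechaos_0thderivative} applies verbatim (here condition (3) of Assumption \ref{assumA}, namely $\alpha-\sum_{i}\zeta_i\geq 0$, guarantees $F+\alpha=\sum_{i}\zeta_i I_1(e_i)^2+(\alpha-\sum_{i}\zeta_i)>0$ almost surely, which is what renders the shifted variable $F+\alpha$ comparable to the positive target $\mathcal{G}_\alpha$). Granting applicability, Theorem \ref{theorem_fourthmoment_singlechaos_0thderivative} with $q=2$ gives at once
\begin{equation*}
\abs{p_{F+\alpha}(x)-p_{\mathcal{G}_\alpha}(x)}\leq P_0(x)\sqrt{\frac{4}{3}\brac{\E{F^4}-6\E{F^3}+6(1-\alpha)\alpha+3\alpha^2}},
\end{equation*}
so that setting $C(x):=\frac{2}{\sqrt3}P_0(x)$ yields the asserted estimate, provided $P_0(x)<\infty$. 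The whole content of the proof therefore reduces to showing that $P_0(x)$ is finite for every $x\neq 0$ (and for $x=0$ when $\alpha>1$), that is, that each moment entering its definition is finite.

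The positive moments $\E{(F+\alpha)^{2(\alpha-i)}}$ for $1\leq i\leq\ceil{\alpha}-1$ cause no trouble: since $F+\alpha>0$ almost surely and lies, up to the additive constant, in a fixed Wiener chaos, it has finite moments of every positive order by hypercontractivity (Lemma \ref{lemma_hypercontract}), and the exponents $2(\alpha-i)$ are strictly positive. The substance lies in the negative moments $\E{(F+\alpha)^{-2}}$, $\E{\norm{DF}_{\mathfrak{H}}^{-4}}$, $\E{\norm{DF}_{\mathfrak{H}}^{-6}}$ and the mixed moment $\E{\norm{DF}_{\mathfrak{H}}^{-4}(F+\alpha)^{-2}}$, all of which I would control through Lemma \ref{lemma_negativemoments_secondchaos} (noting that $D(F+\alpha)=DF$). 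Because $\{\zeta_i\}$ is nonincreasing and $\zeta_N>0$ for some $N>8$, monotonicity gives $\zeta_i\geq\zeta_N>0$ for every index $i\leq N$, so a strictly positive coefficient is available at every index up to $N$. This meets the existence criterion of Lemma \ref{lemma_negativemoments_secondchaos} for each required moment, since $\E{(F+\alpha)^{-2}}$ needs a positive coefficient at an index exceeding $2$, $\E{\norm{DF}_{\mathfrak{H}}^{-4}}$ at an index exceeding $4$, and $\E{\norm{DF}_{\mathfrak{H}}^{-6}}$ at an index exceeding $6$.

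The mixed term I would split by the Cauchy--Schwarz inequality as
\begin{equation*}
\E{\norm{DF}_{\mathfrak{H}}^{-4}(F+\alpha)^{-2}}\leq\E{\norm{DF}_{\mathfrak{H}}^{-8}}^{1/2}\E{(F+\alpha)^{-4}}^{1/2},
\end{equation*}
where the factor $\E{\norm{DF}_{\mathfrak{H}}^{-8}}$ requires, via Lemma \ref{lemma_negativemoments_secondchaos} with $\eta=4$, a positive coefficient at an index strictly larger than $8$; this is precisely the point at which the hypothesis $N>8$ becomes binding, while $\E{(F+\alpha)^{-4}}$ only needs an index exceeding $4$.

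I expect this mixed, order-eight moment to be the sole genuine obstacle and the only reason the threshold $N>8$ appears in the statement, every other moment being finite already under a weaker condition on $N$. Once all these moments are shown finite we conclude $P_0(x)<\infty$ for each admissible $x$, and the estimate follows with $C(x)=\frac{2}{\sqrt3}P_0(x)$, which completes the proof.
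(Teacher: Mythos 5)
Your proposal is correct and follows essentially the same route as the paper: apply Theorem \ref{theorem_fourthmoment_singlechaos_0thderivative} with $q=2$, handle the positive moments of $F+\alpha$ by hypercontractivity, and control the negative moments via Lemma \ref{lemma_negativemoments_secondchaos}. Your Cauchy--Schwarz splitting of the mixed moment is precisely what the paper's proof does implicitly --- that is why it verifies $\E{(F+\alpha)^{-4}}<\infty$ and $\E{\norm{DF}_{\mathfrak{H}}^{-8}}<\infty$ rather than the mixed moment itself, and why $N>8$ is the binding threshold.
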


\begin{remark}
\label{remark_tightestimate}
  The above estimate is tight. Suppose $F=\frac{1}{2}\sum_{i=1}^N \brac{I_1(e_i)^2-1}$ for some integer $N>8$ and $\mathcal{G}_\alpha=\mathcal{G}_{N/2}$, that is $\alpha=N/2$. Since $F$ is distributed as a chi-square distribution of $N$ degree of freedom, we can compute that  
    \begin{align*}
        \E{F^4}=\frac{3N^2}{4}+3N \quad\text{  and  }\quad  \E{F^3}=N. 
    \end{align*}
Then 
\begin{align*}
    \E{F^4}-6\E{F^3}+6(1-N/2)N/2+3(N/2)^2=0. 
\end{align*}
\end{remark}

\begin{proof}
Using Lemma \ref{lemma_negativemoments_secondchaos}, the fact that
there exists a natural number $N>8$ such that $\zeta_N>0$ implies that
$\E{(F+\alpha)^{-4}}<\infty$ and
$\E{\norm{DF}_\mathfrak{H}^{-8}}<\infty$. Moreover, as $F$ is an element
of the second Wiener chaos with finite variance, the
hypercontractivity property of the Wiener chaos per Lemma \ref{lemma_hypercontract} and Lyapunov's
inequality imply that $F$ must have finite $p$-th moments of all orders $p>0$. Consequently, 
\begin{align*}
    &\sum_{i=1}^{\ceil{\alpha}-1}\E{(F+\alpha)^{2(\alpha-i)}}^{1/2}+ \E{\frac{1}{(F+\alpha)^2}}^{1/2}
      +\E{\frac{1}{\norm{DF}_{\mathfrak{H}}^4}}^{1/2}\\
    &\qquad\qquad\qquad\qquad\qquad\qquad +\E{\frac{1}{\norm{DF}_{\mathfrak{H}}^4(F+\alpha)^2}}^{1/2} +\E{\frac{1}{\norm{DF}_{\mathfrak{H}}^6}}^{1/2}<\infty. 
\end{align*}
This allows us to apply Theorem
\ref{theorem_fourthmoment_singlechaos_0thderivative}, which yields the
desired density estimate.
\end{proof}

\bibliographystyle{amsplain}








\end{document}